\documentclass[a4paper,12pt,reqno]{amsart} 

\usepackage{amssymb,amsmath,amsfonts,amsthm,mathtools}
\usepackage{latexsym,mathrsfs,pb-diagram}
\usepackage[pdftex]{graphicx}
\usepackage{pgf,tikz,tikz-cd}

\usepackage[hmargin=3.0cm,vmargin=3.0cm]{geometry} 

\usepackage[plainpages=false,colorlinks,pdfpagelabels]{hyperref}
\hypersetup{
  urlcolor=black,
  citecolor=green,
  linkcolor=blue
}

\numberwithin{equation}{section}

\theoremstyle{definition}
\newtheorem{Def}{Definition}[section]

\theoremstyle{remark}

\newtheorem{Rem}[Def]{Remark}

\theoremstyle{plain}
\newtheorem{Prop}[Def]{Proposition}
\newtheorem{Cor}[Def]{Corollary}
\newtheorem{Thm}[Def]{Theorem}
\newtheorem{Lem}[Def]{Lemma}

\newtheorem{MThm}{Theorem}

\newcommand{\dfn}{\doteq}
\newcommand{\st}{ \ ; \ }
\newcommand{\lra}{\longrightarrow}
\newcommand{\sset}{\subset}
\newcommand{\bb}{\bullet}

\newcommand{\eset}{\emptyset}
\newcommand{\Z}{\mathbb{Z}}
\newcommand{\N}{\mathbb{N}}
\newcommand{\R}{\mathbb{R}}

\newcommand{\C}{\mathbb{C}}

\DeclareMathOperator{\Span}{\mathrm{span}}
\DeclareMathOperator{\ran}{\mathrm{ran}}
\DeclareMathOperator{\rank}{\mathrm{rank}}

\newcommand{\del}{\partial}
\newcommand{\dd}{\mathrm{d}}

\newcommand{\TT}{\mathbb{T}}

\newcommand{\D}{\mathscr{D}}
\newcommand{\cinfty}{\mathscr{C}^\infty}
\newcommand{\sob}{\mathscr{H}}
\newcommand{\sol}{\mathscr{S}}
\newcommand{\K}{\mathscr{K}}
\newcommand{\RR}{\mathscr{R}}

\DeclareMathOperator{\T}{T}
\newcommand{\MM}{\mathrm{M}}
\newcommand{\LL}{\mathrm{L}}
\newcommand{\DD}{\mathrm{D}}
\newcommand{\VV}{\mathcal{V}}
\newcommand{\psum}{\sideset{}{'}\sum}
\newcommand{\LLambda}{\underline{\Lambda}}

\newcommand{\gr}{\mathfrak}
\newcommand{\vv}{\mathrm}
\newcommand{\ad}{\mathrm{ad}}

\author{Gabriel Ara\'{u}jo}
\address{Universidade de S{\~a}o Paulo, Brazil}
\email{\texttt{gccsa@icmc.usp.br}}

\author{Igor A.~Ferra}
\address{Universidade Federal de S{\~a}o Carlos, Brazil}
\email{\texttt{igorferra@ufscar.br}}

\author{Max R.~Jahnke}
\address{Universit\"at zu K\"oln}
\email{\texttt{max.jahnke@uni-koeln.de}}

\author{Luis F.~Ragognette}
\address{Universidade Federal de Minas Gerais, Brazil}
\email{\texttt{luisragognette@mat.ufmg.br}}

\thanks{Supported by
  the S{\~a}o Paulo Research Foundation
  (FAPESP, grants~2023/11769-5, 2024/08416-6, 2024/12753-8, 2025/08642-9),
  by Conselho Nacional de Desenvolvimento Cient{\'i}fico e Tecnol{\'o}gico
  (CNPq, grants~163837/2022-8, 170244/2023-7, 404175/2023-6, 303483/2024-5, 308433/2025-4, 308705/2025-4),
  by Funda{\c c}{\~a}o de Amparo {\`a} Pesquisa do Estado de Minas Gerais
  (FAPEMIG, grant~APQ-00020-24),
  and by
  Deutsche Forschungsgemeinschaft
  (DFG, grant~JA~3453/2-1).
}

\keywords{Cohomology, global solvability, invariant operators, Lie groups} 
\subjclass[2020]{35F35 (primary), 58J10, 35R03 (secondary)}

\title[]{Real involutive systems on compact Lie groups}

\begin{document}

\begin{abstract}
  On a compact connected Lie group $G$, we study the global solvability and the cohomology spaces of the differential complex associated with an essentially real involutive structure that is invariant under left translations. We prove that solvability in the first degree of the complex implies solvability in all other degrees, and furnish a converse for this fact under a certain commutativity hypothesis (that always holds when $G$ is a torus). Additionally, it is proved that the solvability holds when the structure comes from the Lie algebra of a closed subgroup of $G$. We also investigate real tube structures when $G$ is the base manifold.
\end{abstract}

\maketitle


\section{Introduction}

Let $\Omega$ be a smooth manifold and $\VV$ an involutive subbundle of $\C T \Omega$, the complexified tangent bundle of $\Omega$. Involutivity means that the Lie bracket of any two sections of $\VV$ (which are, in particular, complex vector fields over $\Omega$) is again a section of $\VV$. Such a situation arises in several instances of interest, but particularly in complex geometry: if $\Omega$ is a complex manifold, and $\VV$ is spanned by the antiholomorphic vector fields; or if $\Omega$ is a real hypersurface in a complex manifold, and $\VV$ is spanned by the antiholomorphic vector fields which are tangent to $\Omega$. The latter situation is the object of study of CR geometry, and is concerned with phenomena such as extendability of holomorphic functions across the hypersurface $\Omega$, thus playing a key role in the function theory of the ambient manifold.

Another situation of this kind, more intimately related to our own, arises in the theory of foliations. In PDE terminology, we will assume that $\VV$ is \emph{essentially real}, meaning simply that $\VV$ is the complexification of an involutive subbundle of $T \Omega$; i.e.,~an integrable distribution on $\Omega$. That distribution, in turn, produces, by means of the Frobenius Theorem, a foliation on $\Omega$ whose leaves are tangent to $\VV$.

In either case, to such an involutive bundle $\VV$, a general abstract construction (see~\cite{bch_iis, treves_has}) gives rise to a complex of vector bundles over $\Omega$ and first-order differential operators between them
\begin{equation}
  \label{eq:treves_complex}
  \begin{tikzcd}
    \cdots \arrow{r}{\dd'} &
    \cinfty(\Omega; \LLambda^{q - 1}) \arrow{r}{\dd'} &
    \cinfty(\Omega; \LLambda^q) \arrow{r}{\dd'} &
    \cinfty(\Omega; \LLambda^{q + 1}) \arrow{r}{\dd'} &
    \cdots
  \end{tikzcd}
\end{equation}
which encodes important information of the underlying geometry. In each of the three cases discussed above, such a construction yields: the Dolbeault complex, in complex geometry; the $\bar{\del}_b$ complex, in CR geometry; and the foliated de Rham complex, in foliation theory.

In the present paper, our objects of interest are the cohomology spaces $H^q_{\dd'}(\cinfty(\Omega))$ of the complex~\eqref{eq:treves_complex} when $\VV$ is essentially real and $\Omega$ is compact. Except when $\VV = \C T \Omega$ -- in which case~\eqref{eq:treves_complex} is just the de Rham complex --, the complex~\eqref{eq:treves_complex} is not elliptic. Therefore, one cannot expect $H^q_{\dd'}(\cinfty(\Omega))$ to be finite dimensional, so we compromise on a more modest goal. Notice that all the spaces in~\eqref{eq:treves_complex} carry natural Fr{\'e}chet topologies, for which the differential operators $\dd'$ are continuous, hence the $H^q_{\dd'}(\cinfty(\Omega))$ inherit a quotient topology, making them topological vector spaces: our task will be to determine necessary or sufficient conditions for $H^q_{\dd'}(\cinfty(\Omega))$ to be Hausdorff.

From the point of view of the global analysis of PDEs, the latter property is central, and is called \emph{global solvability in degree $q$}: by Functional Analysis, it is nothing but closedness of the range of $\dd': \cinfty(\Omega; \LLambda^{q - 1}) \to \cinfty(\Omega; \LLambda^{q})$. At this level of generality, however, such questions are very hard to answer, so we turn our attention to special cases involving symmetries.

\subsection*{Left-invariant structures on compact Lie groups}
The case to which we dedicated most of our work is when the manifold is a Lie group and $\VV$ is invariant under left translations (hence called a \emph{left-invariant subbundle}). 
%
In this context, let $G$ be a compact, connected Lie group and $\VV \sset \C T G$ be an essentially real, left-invariant subbundle. Our first main result is the following:
%
%
\begin{MThm}
  \label{mainthm1}
  If
  \begin{equation}
    \label{eq:firstdprime}
    \dd': \cinfty(G) \lra \cinfty(G; \LLambda^{1})
    \quad \text{has closed range}
  \end{equation}
  then
  \begin{equation*}
    \dd': \cinfty(G; \LLambda^{q - 1}) \lra \cinfty(G; \LLambda^{q})
    \quad \text{has closed range}
  \end{equation*}
  for all $1 \leq q \leq  n \dfn \rank (\VV)$. Moreover, in that case, we have
  \begin{equation}
    \label{eq:iso_scriptK}
    H^{q}_{\dd'} (\cinfty(G)) \cong \K(G) \otimes H^{q}_{\dd'} (\C_G),
    \quad 0 \leq q \leq n,
  \end{equation}
  topologically, where:
  \begin{itemize}
  \item $\K(G)$ is the space of homogeneous solutions of $\VV$: an $f \in \cinfty(G)$ belongs to $\K(G)$ if and only if $\LL f = 0$ whenever $\LL$ is a smooth section of $\VV$; and
  \item $H^{q}_{\dd'} (\C_G)$ is the space of left-invariant classes\footref{f1} in $H^{q}_{\dd'} (\cinfty(G))$.
  \end{itemize}
\end{MThm}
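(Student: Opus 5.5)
We plan to reduce everything to finite-dimensional linear algebra on the isotypic components of the left-regular decomposition of $\cinfty(G)$. Since $\VV$ is left-invariant and essentially real, it is spanned by left-invariant real vector fields $\vv{L}_1,\dots,\vv{L}_n$ forming a Lie subalgebra $\gr{h}$ of $\gr{g}$. The complex \eqref{eq:treves_complex} then identifies with the Chevalley--Eilenberg complex $\cinfty(G)\otimes\Lambda^q\gr{h}^*$, with $\gr{h}$ acting on $\cinfty(G)$ by left-invariant derivatives. By Peter--Weyl, $\cinfty(G)$ is the (rapidly decreasing) sum over $[\xi]\in\widehat G$ of the spaces of matrix coefficients of $\xi$. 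Each of these spaces is isomorphic, as a left-invariant-vector-field module, to $d_\xi$ copies of the representation $\xi$ restricted to $\gr{h}$. Hence $\dd'$ splits as a block-diagonal operator
\[
\dd'_\xi:\ \C^{d_\xi}\otimes\Lambda^{q-1}\gr{h}^*\lra \C^{d_\xi}\otimes\Lambda^{q}\gr{h}^*,
\]
namely the Chevalley--Eilenberg differential of $\gr{h}$ with coefficients in $\xi|_{\gr{h}}$, tensored with the identity on the $d_\xi$ multiplicity copies.

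The standard criterion (used throughout this kind of invariant-operator literature) is that the range in degree $q$ is closed if and only if the smallest nonzero singular value of $\dd'_\xi$ in degree $q$ has at most polynomial decay in $\langle\xi\rangle$. For each $\xi$ we split $V_\xi=V_\xi^{\gr{h}}\oplus W_\xi$, where $V_\xi^{\gr{h}}$ is the space of $\gr{h}$-invariants and $W_\xi$ is its orthogonal complement. This splitting is $\gr{h}$-stable, because $\xi$ is unitary and $\gr{h}$ acts by skew-Hermitian matrices, so $\xi|_{\gr{h}}$ is completely reducible. The trivial part contributes $H^q(\gr{h};\C)\otimes V_\xi^{\gr{h}}$. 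Summed over $\xi$ and over multiplicity, the invariant vectors are exactly $\K(G)$, which produces \eqref{eq:iso_scriptK}, since $H^q_{\dd'}(\C_G)=H^q(\gr{h};\C)$.

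The key algebraic step is therefore to show that the cohomology with coefficients in $W_\xi$ vanishes in every degree. We would prove this by building an explicit homotopy. Let $\Delta_\xi=-\sum_j\xi(\vv{L}_j)^2$, computed with an orthonormal basis of $\gr{h}$ for an $\mathrm{Ad}$-invariant inner product on $\gr{g}$. This operator is positive semidefinite, and its kernel is exactly $V_\xi^{\gr{h}}$, because $\langle\Delta_\xi v,v\rangle=\sum_j\|\xi(\vv{L}_j)v\|^2$. It is not $\gr{h}$-equivariant in general, however, since $\gr{h}$ need not be an ideal. To avoid this we would argue Hodge-theoretically instead. Put $\square_\xi=\dd'_\xi(\dd'_\xi)^*+(\dd'_\xi)^*\dd'_\xi$ on $W_\xi\otimes\Lambda^q\gr{h}^*$. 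Then $\ker\square_\xi\cong H^q(\gr{h};W_\xi)$.

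The main obstacle is that we cannot simply assert $H^q(\gr{h};W_\xi)=0$ for all $q$: for non-reductive $\gr{h}$ this is not automatic. So we would not aim for vanishing. Instead we aim for the quantitative statement that the smallest nonzero eigenvalue of $\square_\xi$ in degree $q$ is bounded below by a polynomial in that of degree $1$. Concretely, we would derive a Weitzenböck-type identity
\[
\square_\xi=\Delta_\xi\otimes 1+R_\xi,
\]
where $R_\xi$ is built from $\mathrm{ad}$ of $\gr{h}$ and from $\xi(\vv{L}_j)$ composed with the bracket. Its norm is $O(\langle\xi\rangle)$, which is lower order compared with $\Delta_\xi$ off the invariants. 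The hypothesis \eqref{eq:firstdprime} is equivalent to
\[
\Delta_\xi\geq C\langle\xi\rangle^{-N}\quad\text{on } W_\xi,
\]
because $\dd'$ in degree $0$ is $v\mapsto(\xi(\vv{L}_j)v)_j$ and $\|\dd' v\|^2=\langle\Delta_\xi v,v\rangle$. Propagating this bound through the identity to degree $q$ gives the required polynomial lower bound on nonzero singular values in every degree, hence closed range for all $1\le q\le n$. The difficult point is controlling $R_\xi$. We expect to handle it by first using \eqref{eq:firstdprime} to show that the cohomology of the $W_\xi$-part is concentrated where $\Delta_\xi$ is large, and then using a filtration by degree to absorb $R_\xi$. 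Once the bounds hold, the isomorphism \eqref{eq:iso_scriptK} follows from the block decomposition: $H^q$ of the $W$-parts is then Hausdorff, and we would still need to check that it is zero in order to match \eqref{eq:iso_scriptK}. Finally, the topological nature of the isomorphism follows from continuity of the projection onto $\K(G)$, which is given by Fourier multipliers.
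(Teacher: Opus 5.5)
Your proposal has a genuine gap, and it sits at the step you yourself call the key algebraic one.

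\textbf{The stated reason for abandoning $\Delta_\xi$ is mistaken.} $\Delta_\xi$ \emph{is} $\gr{h}$-equivariant; no ideal hypothesis is needed. Take $\vv{L}_1,\ldots,\vv{L}_n$ orthonormal for the restriction to $\gr{h}$ of an $\ad$-invariant inner product. For $\vv{L}\in\gr{h}$ one has $[\vv{L},\vv{L}_j]=\sum_k c_{jk}\vv{L}_k$ with $c_{jk}=-c_{kj}$, because $\ad(\vv{L})$ is skew and preserves the subalgebra $\gr{h}$. So $\sum_j\vv{L}_j^2$ is the Casimir element of $\gr{h}$ and commutes with $\gr{h}$ (Lemma~\ref{lem:lap_commutes}).

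\textbf{The worry about non-reductive $\gr{h}$ is also void.} $\gr{h}$ is a subalgebra of a compact Lie algebra, hence itself of compact type. $W_\xi$ is a unitary $\gr{h}$-module without invariants, so $H^q(\gr{h};W_\xi)=0$ in every degree. That vanishing is exactly what \eqref{eq:iso_scriptK} requires, and you leave it unproved (``we would still need to check that it is zero''). So the isomorphism is not established.

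\textbf{The closed-range half is not established either.} You bound $\|R_\xi\|$ by $O(\langle\xi\rangle)$, while the hypothesis only gives $\Delta_\xi\geq C\langle\xi\rangle^{-N}$ on $W_\xi$. At the bottom of that spectrum, $R_\xi$ is not lower order than $\Delta_\xi\otimes 1$. The proposed fix (``concentration where $\Delta_\xi$ is large'', ``filtration by degree'') is not an argument.

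\textbf{The missing idea is the Chevalley--Eilenberg homotopy} (Proposition~\ref{prop:ce}). If $\dd' f=0$, then $\dd'\big(\sum_j \vv{L}_j\,\imath_{\vv{L}_j} f\big)=-\Delta_{\gr{v}} f$, with $\Delta_{\gr{v}}$ acting coefficientwise. By the equivariance above, $\Delta_\xi\otimes 1$ commutes with $\dd'_\xi$. It is invertible on the $W_\xi$-block and sends closed forms to exact ones. Hence every closed $f$ in that block equals $\dd'_\xi u$ with
\[
u=-\sum_{j=1}^n \xi(\vv{L}_j)\,\imath_{\vv{L}_j}\big(\Delta_\xi^{-1}\otimes 1\big) f,
\qquad
\|u\|\lesssim\langle\xi\rangle\,\big\|\Delta_\xi^{-1}|_{W_\xi}\big\|\,\|f\|\lesssim\langle\xi\rangle^{N+1}\|f\|.
\]
This gives vanishing of the $W$-part cohomology in all degrees, with polynomial bounds, hence closed range for every $q$. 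The $\K(G)$-part is Hausdorff by pointwise linear algebra (Proposition~\ref{prop:algebraic_part_closed_range}), and \eqref{eq:iso_scriptK} follows.

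The paper runs the same argument globally rather than blockwise. It uses $\cinfty(G)=\K(G)\oplus\bar{\RR}(G)$, notes that the hypothesis means $\RR(G)=\bar{\RR}(G)$, and applies the homotopy formula to get $H^q_{\dd'}(\bar{\RR}(G))=0$ (Proposition~\ref{prop:RbarR}). Your Peter--Weyl block framework is compatible with all of this; only the key step needs to be replaced.
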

Results regarding global solvability at intermediate levels ($q\ge 2$) are scarce compared to the first level of the complex. Our result was possible thanks to a decomposition at the level of the cohomology groups done in Section~\ref {sec:DecompositionCohomology}. With this decomposition in hands, our result is a consequence of Theorem~\ref{thm:rep_in_K} and Remark~\ref{rem:rep-formula-tensor-K}. 

Properly interpreted, the spirit of Theorem~\ref{mainthm1} can be traced back to~\cite{bp99sys, dm16}, where a single (Diophantine) condition simultaneously governs solvability of \emph{all} degrees of the complex for certain involutive structures on tori called \emph{tube structures} (more on that later on). These two works were the major conceptual inspiration for our results: on a product of tori, by using a normalization process (Section~\ref{subsec:normalization}), they reduce the study of a tube structure to a left-invariant one. Recently, a similar approach was carried out in~\cite{ds26} for the product of torus and a compact Lie group. Here, that ``single (Diophantine) condition'' is encoded in~\eqref{eq:firstdprime}, which always holds for two special, disjoint classes of structures:
\begin{enumerate}
\item When $\VV$ is \emph{globally hypoelliptic}, that is: any distribution $f \in \D'(G)$ such that $\LL f \in \cinfty(G)$ for every $\LL$ section of $\VV$ is already smooth to start with. This is the same as global hypoellipticity of the differential operator $\dd': \cinfty(G) \to \cinfty(G; \LLambda^{1})$, which then has closed range as a consequence of~\cite[Proposition~3.11]{afr22}. In the globally hypoelliptic case, we also show (Corollary~\ref{cor:GH}) that formula~\eqref{eq:iso_scriptK} simplifies further, since $\K(G)$ only contains constant functions, hence
  \begin{equation*}
    H^{q}_{\dd'} (\cinfty(G)) \cong H^{q}_{\dd'} (\C_G),
    \quad 0 \leq q \leq n.
  \end{equation*}
\item When the leaves of the foliation generated by $\VV$ are closed in $G$ (Theorem~\ref{thm:subgroup_closed}). Notice that, since $\VV$ is left-invariant, such leaves are just translates of $H \sset G$, the integral subgroup associated with the Lie algebra $\gr{v}$ of left-invariant sections of $\VV$. In this case, formula~\eqref{eq:iso_scriptK} becomes (Corollary~\ref{cor:subgroup_closed_iso}):
  \begin{equation*}
    H^{q}_{\dd'}(\cinfty(G)) \cong \cinfty(G/H) \otimes H^q_{\mathrm{dR}} (H),
    \quad 0 \leq q \leq n.
  \end{equation*}
\end{enumerate}
  
Our Theorem~\ref{mainthm1} is clearly an extension of (and, incidentally, provide alternative proof for) a classical result in~\cite{ce48} asserting the existence of left-invariant representatives of de Rham cohomology classes on $G$. That work was the third major influence in the present paper, and inspired the results in the Appendix. It is also very much related to a deep question posed by one of the authors~\cite{jahnke23}, in the context of elliptic structures, concerning the existence of such left-invariant representatives in various settings.

When $G$ is a torus, the converse of Theorem~\ref{mainthm1} is known~\cite{bp99sys}. In our setting, Theorem~\ref{thm:abelian-converse} provides a partial converse under the additional assumption that the Lie algebra $\gr{v}$ of left-invariant sections of $\VV$ is Abelian. This condition, however, does not preclude the non-commutativity of $G$ itself.

\subsection*{Tube structures} Let $M$ be a compact, connected manifold, and $\TT^m \dfn \R^m / 2 \pi \Z^m$ be the $m$-dimensional torus. There is a particular way to induce an involutive structure on $\Omega \dfn M \times \TT^m$, called a (corank $m$) \emph{tube structure}, which we now describe. Let $\omega_1, \ldots, \omega_m$ be smooth, closed $1$-forms on $M$, and take $\VV \sset \C T (M \times \TT^m)$ the subbundle whose sections are vector fields annihilated by all the $1$-forms
\begin{equation*}
  \zeta_k \dfn \dd x_k - \omega_k,
  \quad k \in \{1, \ldots, m\};
\end{equation*}
here, $x = (x_1, \ldots, x_m)$ are the usual coordinates on $\TT^m$. The global solvability of the differential complex~\eqref{eq:treves_complex} associated with a tube structure is a question of great interest (see for instance~\cite{afjr24, bcp96, bpzz18, hz17, hz19} or~\cite{ckmt26} for non-compact manifolds).

If $M$ is a Lie group (with Lie algebra $\gr{m}$) then so is $G \dfn M \times \TT^m$ (with Lie algebra $\gr{g} = \gr{m} \oplus \R^m$), and $\VV$ is essentially real and left-invariant if and only if $\omega_1, \ldots, \omega_m$ are real-valued and left-invariant. While this is still not quite in the context of Theorem~\ref{mainthm1}, we prove in Corollary~\ref{cor:normalization_lie_groups} that every real tube structure on $G$ is equivalent to a left-invariant one by means of a diffeomorphism. Since the differential complex~\eqref{eq:treves_complex} is an invariant functorially attached to $\VV$, by means of this ``normalization'' we can replace the original tube structure by a much simpler one (of ``constant coefficients'', in a certain sense), and take full advantage of the symmetries of $G$ to analyze it: for now the whole machinery of Theorem~\ref{mainthm1} is available.

We exploit this to investigate a question raised in a previous paper~\cite{afjr24}. There, while looking for a certain representation formula for $H^q_{\dd'}(\cinfty(\Omega))$ for tube structures (akin to the one presented in~\cite{dm16} when $M$ is a torus), we found an obstruction, which vanishes in certain cases (e.g.,~the torus), but not always: we then conjectured that the obstruction should vanish when $M$ is a Lie group (hence, the said representation formula does hold). 

In the present paper, our last main result is the proof of this conjecture (Corollary~\ref{cor:mysterious_cohomology_left_invariant_lis}). These questions are explained in details in Section~\ref{sec:formula-tube}, and settled in Section~\ref{sec:perturb} via perturbations of the de Rham complex by closed $1$-forms.

\section{Preliminaries}
\label{secPreliminaries}

We assume that $G$ is compact and connected Lie group and recall that  an involutive subbundle $\VV \sset \C T G$ is \emph{left-invariant} if 
\begin{equation*}
  (\ell_x)_* \VV_y \sset \VV_{xy},
  \quad \forall x,y \in G;
\end{equation*}
here, $\ell_x: G \to G$ stands for the left translation by $x \in G$. Such bundles are in one-to-one correspondence with complex Lie subalgebras $\gr{v}$ of $\C \gr{g}$ -- the Lie algebra of all complex, left-invariant vector fields on $G$ -- in a natural way. Here, we are interested in the case when $\VV$ is \emph{essentially real}\footnote{This is not necessary in this section, though.}, meaning that $\VV_x = \bar{\VV}_x$ for every $x \in G$ which is equivalent to say that $\gr{v} = \bar{\gr{v}}$ as subspaces of $\C \gr{g}$.

The latter means that we can choose $\LL_1, \ldots, \LL_n$ \emph{real} left-invariant vector fields forming a basis for $\gr{v}$. We choose additional vectors $\vv{M}_1, \ldots, \vv{M}_m \in \C \gr{g}$ such that, together, they form a basis for $\C \gr{g}$. We denote the dual of $\C \gr{g}$ by $\C \gr{g}^*$ and the dual basis of $\vv{L}_1, \ldots, \vv{L}_n, \vv{M}_1, \ldots, \vv{M}_m \in \C \gr{g}$ by $\tau_1, \ldots, \tau_n, \zeta_1, \ldots, \zeta_m \in \C \gr{g}^*$. These are left-invariant $1$-forms on $G$, and, since $m + n = \dim G$, for $f \in \cinfty(G)$ we can write
\begin{equation*}
  \dd f = \sum_{j = 1}^n (\vv{L}_j f) \ \tau_j + \sum_{k = 1}^m (\vv{M}_k f) \ \zeta_k.
\end{equation*}

For $q \in \{0, \ldots, n\}$, let $\cinfty(G; \LLambda^{q})$ be the space of smooth $q$-forms on $G$ of type
\begin{equation}
  \label{eq:urepglobal}
  u = \psum_{|J| = q} u_{J} \ \tau_J,
  \quad u_{J} \in \cinfty(G).
\end{equation}
The prime indicates that the summation is restricted to ordered multi-indices $J = (j_1, \ldots, j_q)$, where we take $\tau_J \dfn \tau_{j_1} \wedge \cdots \wedge \tau_{j_q}$. The exterior derivative induces a differential operator $\dd': \cinfty(G; \LLambda^{q}) \lra \cinfty(G; \LLambda^{q + 1})$ as follows: given $u \in \cinfty(G; \LLambda^{q})$ as in~\eqref{eq:urepglobal} we have
\begin{equation*}
  \dd u
  = \sum_{j = 1}^n \psum_{|J| = q} (\vv{L}_j u_{J}) \ \tau_j \wedge \tau_J
  + \sum_{k = 1}^m \psum_{|J| = q} (\vv{M}_k u_{J}) \ \zeta_k \wedge \tau_J
  + \psum_{|J| = q} u_{J} \ \dd \tau_J,
\end{equation*}
which we project back onto $\cinfty(G; \LLambda^{q + 1})$ by deleting any term containing a $\zeta_k$; for instance, the whole second double sum. As for the third sum, since $\dd \tau_J$ is a left-invariant $(q + 1)$-form on $G$, it can be written as a linear combination over $\C$ of our basic forms, and we only care about the portions of such a combination which happen to live in $\LLambda^{ q + 1}$. That is, there are constants $\alpha_{JK} \in \C$ such that
\begin{equation}
  \label{eq:structure-ctes}
  \dd \tau_J = \psum_{|K| = q + 1} \alpha_{JK} \ \tau_K
  + \text{terms containing at least one $\zeta_k$},
\end{equation}
leading us to
\begin{equation}
  \label{eq:isom_dprime3}
  \dd' u
  \dfn
  \sum_{j = 1}^n \psum_{|J| = q} (\vv{L}_j u_{J}) \ \tau_j \wedge \tau_J
  +
  \psum_{|J| = q} \psum_{|K| = q + 1} \alpha_{JK} u_{J} \ \tau_K.
\end{equation}
We have that $\dd' \circ \dd' = 0$, i.e.,~a differential complex is defined: our purpose is to investigate the cohomology spaces
\begin{equation*}
  H^{q}_{\dd'} (\cinfty(G))
  \dfn
  \frac{ \ker \{ \dd': \cinfty(G; \LLambda^{q}) \lra \cinfty(G; \LLambda^{q + 1}) \} }
  { \ran \{ \dd': \cinfty(G; \LLambda^{q - 1}) \lra \cinfty(G; \LLambda^{q}) \} },
  \quad q \in \{0, \ldots, n\}.
\end{equation*}

A useful (and more invariant) point of view is to regard a $q$-form $w$ on $G$ as an alternating $\cinfty(G)$-multilinear map $w: \cinfty(G; \C TG)^q \to \cinfty(G)$. In that sense, we define $\cinfty(G; \T'^{1, q - 1})$ as the space of all such $w$ satisfying
\begin{equation*}
  w(\vv{X}_1, \ldots, \vv{X}_q) = 0
  \quad \text{whenever $\vv{X}_1, \ldots, \vv{X}_q$ are sections of $\VV$}.
\end{equation*}
One checks at once that every equivalence class in $\cinfty(G; \wedge^q \C T^* G) / \cinfty(G; \T'^{1, q - 1})$ has a unique representative in $\cinfty(G; \LLambda^{q})$; that these two spaces are therefore isomorphic; and that $\dd'$ is, in this regard, precisely the map induced on the quotients by the exterior derivative. Actually, in our concrete setup $\cinfty(G; \T'^{1, q - 1})$ is exactly our set of remainders (``terms containing at least one $\zeta_k$''), hence
\begin{equation*}
  \cinfty(G; \wedge^q \C T^* G) = \cinfty(G; \LLambda^{q}) \oplus \cinfty(G; \T'^{1, q - 1}).
\end{equation*}
Thanks to the invariant formula for the exterior derivative~\cite[Proposition~12.19]{lee_smooth}, for any $u \in \cinfty(G; \LLambda^{q})$ the following holds: if $\vv{X}_0, \vv{X}_1, \ldots, \vv{X}_q$ are sections of $\VV$ then
\begin{multline}
  \label{eq:invariant_formula_dprime}
  (\dd' u)(\vv{X}_0, \ldots, \vv{X}_q)
  = \sum_{k = 0}^q (-1)^{k} \vv{X}_k \left( u(\vv{X}_0, \ldots, \widehat{\vv{X}}_k, \ldots, \vv{X}_q) \right) \\
  + \sum_{j < k} (-1)^{j + k}
  u ([\vv{X}_j, \vv{X}_k], \vv{X}_0, \ldots, \widehat{\vv{X}}_j, \ldots, \widehat{\vv{X}}_k, \ldots, \vv{X}_q).
\end{multline}

Throughout the paper, we extend any linear map $A: \cinfty(G) \to \cinfty(G)$ to forms~\eqref{eq:urepglobal} by letting it act coefficientwise:
\begin{equation*}
  Au \dfn \psum_{|J| = q} (Au_J) \ \tau_J.
\end{equation*}
Regarding $q$-forms on $G$ as maps $\cinfty(G; \C TG)^q \to \cinfty(G)$, this is the same as the composition $A \circ u$ since the coefficients in~\eqref{eq:urepglobal} are determined by $u_J = u(\vv{L}_{j_1}, \ldots, \vv{L}_{j_q})$ for each $J = (j_1, \ldots, j_q)$. In particular, this extension is independent of the choice of basis in~\eqref{eq:urepglobal}. Moreover, formula~\eqref{eq:invariant_formula_dprime} implies that
\begin{equation}
  \label{eq:comm_A_dprime}
  \left( \dd' (A u) - A (\dd' u) \right) (\vv{X}_0, \ldots, \vv{X}_q)
  = \sum_{k = 0}^q (-1)^{k} [\vv{X}_k, A] \left( u(\vv{X}_0, \ldots, \widehat{\vv{X}}_k, \ldots, \vv{X}_q) \right), 
\end{equation}
whatever $u \in \cinfty(G; \LLambda^{q})$ and $\vv{X}_0, \ldots, \vv{X}_q$ sections of $\VV$. Hence, if $A$ commutes with every $\vv{L} \in \gr{v}$ on functions, then its extension also commutes with $\dd'$ on forms.

\subsection{ Convenient metrics, frames and their sub-Laplacians}

We endow $G$ with a Riemannian metric that is $\ad$-invariant, meaning that it is left-invariant and the inner product $\langle \cdot, \cdot \rangle$ induced on $\gr{g}$ turns the linear endomorphism $\vv{Y} \in \gr{g} \mapsto [\vv{X}, \vv{Y}] \in \gr{g}$ into a skew-symmetric map for every $\vv{X} \in \gr{g}$. Compactness of $G$ ensures that metrics with this property always exist, a consequence of~\cite[Proposition~4.24]{knapp_lgbi}. We can now further assume $\LL_1, \ldots, \LL_n$ orthonormal with respect to $\langle \cdot, \cdot \rangle$.
\begin{Lem}
  \label{lem:lap_commutes}
  The second-order operator on $G$
  \begin{equation}
    \label{eq:DeltaV}
    \Delta_{\gr{v}} \dfn - \sum_{j = 1}^n \vv{L}_j^2
  \end{equation}
  commutes with every $\vv{L} \in \gr{v}$.
\end{Lem}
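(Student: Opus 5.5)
By linearity it is enough to show that $[\vv{L}_k, \Delta_{\gr{v}}] = 0$ for each $k \in \{1, \ldots, n\}$, since $\vv{L}_1, \ldots, \vv{L}_n$ span $\gr{v}$ over $\C$. For this I will express the brackets in the chosen basis and exploit the $\ad$-invariance of the metric.

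First, since $\gr{v}$ is a Lie subalgebra, for every $j,k$ I can write
\begin{equation*}
  [\vv{L}_k, \vv{L}_j] = \sum_{l = 1}^n c^{l}_{kj} \vv{L}_l ,
\end{equation*}
where the constants are real because the $\vv{L}_j$ are real left-invariant vector fields and $\gr{v} \cap \gr{g}$ is a real subalgebra. Orthonormality gives $c^l_{kj} = \langle [\vv{L}_k, \vv{L}_j], \vv{L}_l \rangle$. Skew-symmetry of $\ad(\vv{L}_k)$ with respect to $\langle \cdot, \cdot \rangle$ then yields
\begin{equation*}
  c^l_{kj} = \langle [\vv{L}_k, \vv{L}_j], \vv{L}_l \rangle = - \langle \vv{L}_j, [\vv{L}_k, \vv{L}_l] \rangle = - c^j_{kl}.
\end{equation*}
In other words, for fixed $k$ the matrix $(c^l_{kj})_{j,l}$ is antisymmetric.

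Next comes the commutator computation. Using the derivation rule $[A, B^2] = [A,B]B + B[A,B]$, I get
\begin{equation*}
  [\vv{L}_k, \Delta_{\gr{v}}] = - \sum_{j=1}^n \left( [\vv{L}_k, \vv{L}_j] \vv{L}_j + \vv{L}_j [\vv{L}_k, \vv{L}_j] \right) = - \sum_{j,l=1}^n c^l_{kj} \left( \vv{L}_l \vv{L}_j + \vv{L}_j \vv{L}_l \right).
\end{equation*}
The factor $\vv{L}_l \vv{L}_j + \vv{L}_j \vv{L}_l$ is symmetric in $(j,l)$, while $c^l_{kj}$ is antisymmetric in $(j,l)$. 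Hence the double sum vanishes, and the lemma follows.

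The only point requiring care is the first step. One must check that the structure constants of $\gr{v}$ in the basis $\vv{L}_1, \ldots, \vv{L}_n$ coincide with the inner products $\langle [\vv{L}_k, \vv{L}_j], \vv{L}_l \rangle$. This holds because the bracket lands in $\gr{v}$ and the basis is orthonormal there, so no components along the complement of $\gr{v}$ in $\gr{g}$ appear. Once that is in place, the antisymmetry argument is routine.
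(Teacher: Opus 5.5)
Your proposal is correct and is essentially the paper's proof: expand the brackets in the real orthonormal basis, use $\ad$-invariance to make the coefficient matrix antisymmetric, and cancel it against the symmetric factor $\vv{L}_l\vv{L}_j+\vv{L}_j\vv{L}_l$. The only cosmetic difference is that you reduce by linearity to the basis fields $\vv{L}_k$, whereas the paper reduces to an arbitrary real $\vv{L}\in\gr{v}$ using $\gr{v}=\bar{\gr{v}}$.
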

The proof is easy and purely algebraic; we leave it to Appendix~\ref{ap:algebraic_proof}.
\begin{Def}
  A linear subspace $\mathscr{F}(G) \sset \cinfty(G)$ is \emph{$\gr{v}$-invariant} if $\vv{L} f \in \mathscr{F}(G)$ whenever $f \in \mathscr{F}(G)$ and $\vv{L} \in \gr{v}$. In that case, we let
  \begin{equation*}
    \mathscr{F}(G; \LLambda^{q})
    \dfn
    \{ u \in \cinfty(G; \LLambda^{q}) \st u_J \in \mathscr{F}(G), \ \forall J \}.
  \end{equation*}
  It is clear from~\eqref{eq:isom_dprime3} that $\dd' \mathscr{F}(G; \LLambda^{q}) \sset \mathscr{F}(G; \LLambda^{q + 1})$, and we denote for $0 \leq q \leq n$:
  \begin{equation}
    \label{eq:cohomology_F}
    H^{q}_{\dd'} (\mathscr{F}(G))
    \dfn
    \frac{ \ker \{ \dd': \mathscr{F}(G; \LLambda^{q}) \lra \mathscr{F}(G; \LLambda^{q + 1}) \} }
         { \ran \{ \dd': \mathscr{F}(G; \LLambda^{q - 1}) \lra \mathscr{F}(G; \LLambda^{q}) \} }.
  \end{equation}
\end{Def}
E.g.,~the space $\K(G)$ of smooth \emph{homogeneous solutions} of $\VV$ is $\gr{v}$-invariant:
\begin{equation*}
  \K(G)
  \dfn
  \left\{ f \in \cinfty(G) \st \text{$\vv{L}f = 0$ for every $\vv{L}$ section of $\VV$} \right\}.
\end{equation*}

Each $\vv{L}_j$ is left-invariant and real, and as such is skew-symmetric with respect to the $L^2$ inner product induced on functions by the metric. Therefore:
\begin{equation}
  \label{eq:delta-vs-fields-norm}
  \langle \Delta_{\gr{v}} f, f \rangle_{L^2(G)} = \sum_{j = 1}^n \| \vv{L}_j f \|_{L^2(G)}^2,
  \quad \forall f \in \cinfty(G).
\end{equation}
This identity easily implies that
\begin{equation*}
  \K(G) = \ker \{\Delta_{\gr{v}}: \cinfty(G) \lra \cinfty(G) \}.
\end{equation*}

On time, we state a technical result which is key to our further developments. It is also of algebraic nature, hence relies on no further hypotheses; we prove it in more precise form in Proposition~\ref{prop:ce}.
\begin{Lem}
  \label{lem:ce}
  If $g \in \cinfty(G; \LLambda^{q})$ is such that $\dd' g = 0$ then there exists $v \in \cinfty(G; \LLambda^{q - 1})$ such that $\dd' v = \Delta_{\gr{v}} g$. Moreover, for any $\gr{v}$-invariant subspace $\mathscr{F}(G) \sset \cinfty(G)$, if $g \in \mathscr{F}(G; \LLambda^{q})$ then one can take $v \in \mathscr{F}(G; \LLambda^{q - 1})$.
\end{Lem}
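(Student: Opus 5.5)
This is a Cartan-type homotopy formula for the Lie algebra $\gr{v}$ acting on the complex. The idea is to build a contraction $\iota$ of degree $-1$ and a "Lie derivative" $\theta$ of degree $0$ with $\theta = \dd'\iota + \iota\dd'$, and then to recognise $\Delta_{\gr{v}}$ as $-\sum_j \theta_j^2$ plus something that vanishes on closed forms.

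\textbf{Step 1: contraction and Lie derivative.} For $\vv{L}\in\gr{v}$ define the contraction $\iota_{\vv{L}}: \cinfty(G;\LLambda^{q}) \to \cinfty(G;\LLambda^{q-1})$ by
\[
(\iota_{\vv{L}}u)(\vv{X}_1,\ldots,\vv{X}_{q-1}) = u(\vv{L},\vv{X}_1,\ldots,\vv{X}_{q-1}).
\]
This is well defined on $\LLambda$, since it is the contraction inside the quotient by $\cinfty(G;\T'^{1,q-1})$. Set $\theta_{\vv{L}} \dfn \dd'\iota_{\vv{L}} + \iota_{\vv{L}}\dd'$.

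Using the invariant formula~\eqref{eq:invariant_formula_dprime}, evaluated on left-invariant arguments, I expect the usual Cartan identity to hold:
\[
(\theta_{\vv{L}}u)(\vv{X}_1,\ldots,\vv{X}_q) = \vv{L}\bigl(u(\vv{X}_1,\ldots,\vv{X}_q)\bigr) - \sum_i u(\vv{X}_1,\ldots,[\vv{L},\vv{X}_i],\ldots,\vv{X}_q).
\]
Since $\gr{v}$ is a subalgebra, the brackets stay in $\gr{v}$. This identity is a purely algebraic check, identical to the classical one. In coefficient form it reads $\theta_{\vv{L}} u = \vv{L}u + \rho(\vv{L})u$, where $\vv{L}$ acts coefficientwise and $\rho(\vv{L})$ is a constant-coefficient endomorphism of $\wedge^q\C\gr{v}^*$, namely the coadjoint action extended as a derivation.

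\textbf{Step 2: expand the sum of squares.} Compute
\[
-\sum_j \theta_{\vv{L}_j}^2 = \Delta_{\gr{v}} - \sum_j \bigl(2\rho(\vv{L}_j)\vv{L}_j + \rho(\vv{L}_j)^2\bigr).
\]
This uses that $\rho$ has constant coefficients and therefore commutes with $\vv{L}_j$ acting coefficientwise. The extra terms are the obstacle, and cancelling them on closed forms is the main step.

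The plan for this step is as follows. The metric is $\ad$-invariant on $\gr{g}$, and the orthogonal projection $\gr{g}\to\gr{v}$ is available. On closed forms, $\theta_{\vv{L}}g = \dd'\iota_{\vv{L}}g$ is exact. Hence $\sum_j\theta_j^2 g = \dd'\bigl(\sum_j \iota_j\theta_j g\bigr)$, using $\theta\dd' = \dd'\theta$. So the real task is to show that $\Delta_{\gr{v}} g$ itself is exact, which requires controlling the correction $\sum_j(2\rho_j\vv{L}_j + \rho_j^2)g$.

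If this cancellation is hard, I would switch to a Casimir-type argument. By Lemma~\ref{lem:lap_commutes}, $\Delta_{\gr{v}}$ commutes with $\gr{v}$, and hence with $\dd'$ via~\eqref{eq:comm_A_dprime}. Write the operator $\Omega \dfn \sum_j \theta_j^2$. Then $\Omega = -\Delta_{\gr{v}} + C$, where $C$ involves $\rho$, and $C$ is itself a Casimir-like operator of the representation $\rho$ tensored with the regular one. Decompose $\cinfty(G)$ into $\Delta_{\gr{v}}$-eigenspaces; these are $\gr{v}$-invariant and finite dimensional on each isotypic component of the regular representation. On each such piece, $\Omega$ is exact on closed forms, so wherever $\Omega$ is invertible and commutes with $\dd'$, every closed form there is exact. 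Where it is not invertible, I would show that $\Delta_{\gr{v}}$ acts as zero on the kernel-relevant part. I expect this last point, reconciling $\Delta_{\gr{v}}$ with the full Lie derivative Casimir, to be the main obstacle; this is likely why the statement involves $\Delta_{\gr{v}}$ and not a general Casimir.

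\textbf{Step 3: invariant subspaces.} For a $\gr{v}$-invariant $\mathscr{F}(G)$, all of $\iota_{\vv{L}}$, $\theta_{\vv{L}}$ and $\dd'$ preserve $\mathscr{F}(G;\LLambda^\bullet)$, since they are built from coefficientwise $\vv{L}$'s and constant matrices. The constructed $v$ is an explicit polynomial in these operators applied to $g$, so $v \in \mathscr{F}(G;\LLambda^{q-1})$ whenever $g \in \mathscr{F}(G;\LLambda^{q})$.
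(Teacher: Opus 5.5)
Your Steps 1 and 3 are sound, but Step 2 leaves the actual content of the lemma unproved. You write $-\sum_j\theta_{\LL_j}^2=\Delta_{\gr{v}}-\sum_j\bigl(2\rho(\LL_j)\LL_j+\rho(\LL_j)^2\bigr)$ and note that $\sum_j\theta_{\LL_j}^2g$ is exact. After that, everything hinges on the correction $\sum_j\bigl(2\rho(\LL_j)\LL_j+\rho(\LL_j)^2\bigr)g$ being exact for closed $g$, and you explicitly leave this open. It is also exactly where the $\ad$-invariance of the metric must be used, i.e.\ the antisymmetry~\eqref{eq:orth_comm_coeff} of $\ad_{\vv{L}}$ in the orthonormal basis $\LL_1,\ldots,\LL_n$; your Step 2 never uses it.

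The missing idea is a different homotopy, in which $\LL_j$ acts only on the coefficients: $h\dfn\sum_j\LL_j\,\iota_{\LL_j}$ rather than $\sum_j\iota_{\LL_j}\theta_{\LL_j}$. For closed $g$,
\[
\dd'(hg)=\sum_{j=1}^n[\dd',\LL_j]\,\iota_{\LL_j}g+\sum_{j=1}^n\LL_j\,\theta_{\LL_j}g
=-\Delta_{\gr{v}}g+\sum_{j=1}^n\bigl([\dd',\LL_j]\,\iota_{\LL_j}+\LL_j\,\rho(\LL_j)\bigr)g,
\]
and the last sum vanishes identically. Evaluate it on $\vv{X}_1,\ldots,\vv{X}_q\in\gr{v}$. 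By~\eqref{eq:comm_A_dprime}, the first kind of term becomes $\sum_k(-1)^{k+1}\sum_j[\vv{X}_k,\LL_j]\bigl(g(\LL_j,\widehat{\pmb{\vv{X}}}_k)\bigr)$. The second kind equals $\sum_k(-1)^{k}\sum_j\LL_j\bigl(g([\LL_j,\vv{X}_k],\widehat{\pmb{\vv{X}}}_k)\bigr)$. These cancel by Lemma~\ref{lem:fund_identity}, which is precisely the antisymmetry above. Thus $v\dfn-hg$ solves $\dd'v=\Delta_{\gr{v}}g$; in fact $\dd'h+h\dd'=-\Delta_{\gr{v}}$ on all forms, and this is Proposition~\ref{prop:ce}. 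Since $h$ is built from coefficientwise $\LL_j$'s and constant matrices, your Step 3 then gives the statement for $\mathscr{F}(G)$. In your notation, this computation shows that the correction equals $\dd'\bigl(\sum_j\iota_{\LL_j}\rho(\LL_j)g\bigr)$, and that is what had to be proved.

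The Casimir fallback does not close the gap. The total Casimir $\sum_j\theta_{\LL_j}^2$ is null-homotopic. On each finite-dimensional piece, however, its kernel is the invariant part for the tensor-product action, and there $\Delta_{\gr{v}}$ need not vanish. Already for the de Rham complex of $\mathrm{SU}(2)$ (so $\gr{v}=\C\gr{g}$), take degree one with coefficients in the matrix entries of the adjoint representation. Then $\gr{g}^*\otimes\gr{g}$ contains the trivial representation, while $\Delta_{\gr{v}}$ acts there as a nonzero scalar. So ``$\Delta_{\gr{v}}$ acts as zero on the kernel-relevant part'' is false at the level of forms, and at the level of cohomology it is the lemma itself. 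Moreover, a route that inverts operators piece by piece produces $v$ through spectral projections, and these need not preserve an arbitrary $\gr{v}$-invariant $\mathscr{F}(G)$. Your Step 3 argument (that $v$ is a polynomial in the $\gr{v}$-operators applied to $g$) would then no longer apply, and the second half of the statement would be lost.
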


\section{Decompositions of forms}
\label{sec:decompositions}

We recall that the following are equivalent~\cite[Proposition~3.11]{afr22}:
\begin{enumerate}
\item $\dd': \cinfty(G) \to \cinfty(G; \LLambda^{1})$ has closed range;
\item $\Delta_{\gr{v}}: \cinfty(G) \to \cinfty(G)$ has closed range.
\end{enumerate}
This motivates the introduction of the space
\begin{equation}
  \label{eq:Delta_range}
  \RR(G)
  \dfn
  \ran \left\{ \Delta_{\gr{v}}: \cinfty(G) \lra \cinfty(G) \right\},
\end{equation}
which is $\gr{v}$-invariant thanks to Lemma~\ref{lem:lap_commutes}. Its closure in $\cinfty(G)$, which we will denote by $\bar{\RR}(G)$, is also $\gr{v}$-invariant: this is easily seen using again Lemma~\ref{lem:lap_commutes} and the continuity of the action of vector fields.
\begin{Prop}
  \label{prop:direct_sum_closed_range}
  We have $\cinfty(G) = \K(G) \oplus \bar{\RR}(G)$. The projections
  \begin{equation*}
    \pi_{\K}: \cinfty(G) \lra \K(G),
    \quad
    \pi_{\bar{\RR}}: \cinfty(G) \lra \bar{\RR}(G),
  \end{equation*}
  are continuous.
\end{Prop}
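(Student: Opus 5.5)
The plan is to use the spectral theory of $\Delta_{\gr{v}}$ with respect to the Peter--Weyl decomposition of $L^2(G)$. Since $\Delta_{\gr{v}}$ is a left-invariant operator, it commutes with right translations. Consequently it preserves each finite-dimensional isotypic subspace $\mathcal{H}_\lambda \subset \cinfty(G)$, the span of matrix coefficients of the irreducible representation $\lambda \in \widehat{G}$. By \eqref{eq:delta-vs-fields-norm}, $\Delta_{\gr{v}}$ is symmetric and nonnegative on each $\mathcal{H}_\lambda$ with respect to the $L^2$ inner product, since each $\vv{L}_j$ is real and skew-symmetric. Hence $\mathcal{H}_\lambda = \ker(\Delta_{\gr{v}}|_{\mathcal{H}_\lambda}) \oplus \ran(\Delta_{\gr{v}}|_{\mathcal{H}_\lambda})$, an $L^2$-orthogonal sum. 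Let $P_\lambda$ denote the orthogonal projection of $\mathcal{H}_\lambda$ onto the kernel piece.

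I then define $\pi_{\K} f \dfn \sum_\lambda P_\lambda \hat f(\lambda)$, where $\hat f(\lambda)$ is the component of $f$ in $\mathcal{H}_\lambda$. Since $P_\lambda$ is an orthogonal projection, $\|P_\lambda \hat f(\lambda)\|_{L^2} \le \|\hat f(\lambda)\|_{L^2}$. Smoothness on $G$ is characterized by rapid decay of $\|\hat f(\lambda)\|_{L^2}$ in terms of the Casimir eigenvalue $1+|\lambda|^2$, with seminorms $\sum_\lambda (1+|\lambda|)^{2s}\|\hat f(\lambda)\|^2$ that equivalently come from the Sobolev norms of the full Laplacian $\Delta_G$. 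These seminorms are therefore preserved, so $\pi_{\K}$ is a continuous map $\cinfty(G)\to\cinfty(G)$.

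Next I identify the image and the kernel. An element $f$ with $\pi_{\K} f = f$ lies in $\ker \Delta_{\gr{v}} = \K(G)$, because $\Delta_{\gr{v}}$ acts componentwise. Conversely, if $\Delta_{\gr{v}} f = 0$, then each component lies in the kernel piece, so $\pi_{\K} f = f$. For $\pi_{\bar\RR} \dfn I - \pi_{\K}$, I must show that its image is exactly $\bar\RR(G)$. For the inclusion $\bar\RR(G) \subset \ker \pi_{\K}$, note that $\pi_{\K}\Delta_{\gr{v}} g = 0$ componentwise, since the kernel and range pieces are orthogonal. Continuity of $\pi_{\K}$ then gives the closure. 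For the reverse inclusion, take $f$ with $\pi_{\K} f=0$. Then each component $\hat f(\lambda)$ lies in $\ran(\Delta_{\gr{v}}|_{\mathcal{H}_\lambda})$. The finite partial sums $\sum_{|\lambda|\le N}\hat f(\lambda)$ therefore belong to $\RR(G)$, because each component is $\Delta_{\gr{v}}$ of an element of $\mathcal{H}_\lambda$ and sums are finite. These partial sums converge to $f$ in $\cinfty(G)$, so $f\in\bar\RR(G)$. This gives $\cinfty(G)=\K(G)\oplus\bar\RR(G)$, with both projections continuous.

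The main technical point I expect to need care is the Fourier characterization: that partial Peter--Weyl sums converge in the Fréchet topology, and that componentwise contractions preserve smoothness. I would cite the standard fact that $\Delta_G$ (Casimir) acts on $\mathcal{H}_\lambda$ by a scalar $\mu_\lambda$ and that $f\in\cinfty(G)$ iff $\sum_\lambda \mu_\lambda^{2s}\|\hat f(\lambda)\|^2_{L^2}<\infty$ for all $s$, with these quantities generating the topology via elliptic regularity. I also need to check that $\Delta_{\gr{v}}$ preserves $\mathcal{H}_\lambda$. This holds because $\mathcal{H}_\lambda$ is invariant under the right regular representation and left-invariant vector fields are its infinitesimal generators.
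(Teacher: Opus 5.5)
Your proof is correct and follows essentially the paper's route. The paper works with the eigenspaces $E_\lambda$ of $\Delta_G$ for the $\ad$-invariant metric, each a finite sum of your isotypic pieces $\mathcal{H}_\lambda$ since $\Delta_G$ is then a Casimir operator. It splits each into the $L^2$-orthogonal kernel and range of $\Delta_{\gr{v}}$, and uses the Paley--Wiener characterization for convergence of the componentwise sums and continuity of the projections, just as you do.

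One small slip: a left-invariant operator commutes with \emph{left} translations, not right ones, so your opening justification is wrong as stated. The reason in your last paragraph is the correct one: $\mathcal{H}_\lambda$ is invariant under the right regular representation, whose infinitesimal generators are the left-invariant fields, so $\Delta_{\gr{v}}$ preserves $\mathcal{H}_\lambda$.
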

\begin{proof}
  It is clear that
  \begin{equation}
    \label{eq:sublap_symm}
    \langle \Delta_{\gr{v}} f, g \rangle_{L^2(G)} = \langle f, \Delta_{\gr{v}} g \rangle_{L^2(G)},
    \quad \forall f, g \in \cinfty(G),
  \end{equation}
  hence $\K(G) \perp \RR(G)$ in $L^2(G)$. Therefore, $\K(G) \perp \bar{\RR}(G)$ as well: we must prove that their sum is $\cinfty(G)$.
  
  Let $\Delta_G \dfn \dd^* \dd$ be the Laplace-Beltrami operator, acting on functions, associated to our metric on $G$. The following claims regarding its spectral theory are well-known: the spectrum $\sigma(\Delta_G)$ is countable; the eigenspaces $E_\lambda \sset \cinfty(G)$ associated to eigenvalues $\lambda \in \sigma(\Delta_G)$ are finite dimensional; and 
  \begin{equation}
    \label{eq:fourier_DeltaG}
    f = \sum_{\lambda \in \sigma(\Delta_G)} \mathcal{F}_\lambda(f)
  \end{equation}
  for every $f \in L^2(G)$, where $\mathcal{F}_\lambda: L^2(G) \to E_\lambda$ denotes the orthogonal projection. In a Paley-Wiener-like fashion -- a consequence of Weyl's law -- smoothness of $f$ is characterized by the following condition: for each  $s > 0$ there exists $C_s > 0$ such that
  \begin{equation}
    \label{eq:paley-wiener}
    \| \mathcal{F}_\lambda(f) \|_{L^2(G)} \leq C_s (1 + \lambda)^{-s},
    \quad \forall \lambda \in \sigma(\Delta_G),
  \end{equation}
  in which case the series~\eqref{eq:fourier_DeltaG} actually converges in $\cinfty(G)$.

  Since every $\vv{X} \in \gr{g}$ is skew-symmetric with respect to the $L^2$ inner product on functions, one easily shows that $\Delta_G = \Delta_{\gr{g}}$ (for any choice of orthonormal basis of $\gr{g}$), hence $\Delta_G$ commutes with any $\vv{X} \in \gr{g}$ (Lemma~\ref{lem:lap_commutes}), and then also with $\Delta_{\gr{v}}$. In particular, $\Delta_{\gr{v}} (E_\lambda) \sset E_\lambda$ for every $\lambda \in \sigma(\Delta_G)$: we define
  \begin{equation*}
    \K_\lambda \dfn \ker \left\{ \Delta_{\gr{v}}: E_\lambda \lra E_\lambda \right\},
    \quad
    \RR_\lambda \dfn \ran \left\{ \Delta_{\gr{v}}: E_\lambda \lra E_\lambda \right\}.
  \end{equation*}
  From~\eqref{eq:sublap_symm} we further infer that $E_\lambda = \K_\lambda \oplus \RR_\lambda$ as a sum of orthogonal, finite dimensional subspaces.

  Given $f \in \cinfty(G)$, using the $L^2$-orthogonality we are entitled to decompose uniquely $\mathcal{F}_\lambda(f) = \mathcal{F}_\lambda(f)_{\K} + \mathcal{F}_\lambda(f)_{\RR}$ in $\K_\lambda \oplus \RR_\lambda$, for each $\lambda \in \sigma(\Delta_G)$. It follows from the Paley-Wiener Theorem above that both series
  \begin{equation*}
    f_{\K} \dfn \sum_{\lambda \in \sigma(\Delta_G)} \mathcal{F}_\lambda(f)_{\K},
    \quad 
    f_{\bar{\RR}} \dfn \sum_{\lambda \in \sigma(\Delta_G)} \mathcal{F}_\lambda(f)_{\RR}
  \end{equation*}
  converge in $\cinfty(G)$. Clearly, $f = f_{\K} + f_{\bar{\RR}}$ and $f_{\K} \perp f_{\bar{\RR}}$. On the one hand, it is clear that
  \begin{equation*}
    \Delta_{\gr{v}} (f_{\K})
    =
    \Delta_{\gr{v}} \Big( \sum_{\lambda \in \sigma(\Delta_G)} \mathcal{F}_\lambda(f)_{\K} \Big)
    =
    \sum_{\lambda \in \sigma(\Delta_G)} \Delta_{\gr{v}} \mathcal{F}_\lambda(f)_{\K}
    = 0
  \end{equation*}
  i.e.,~$f_{\K} \in \K(G)$. On the other hand, by definition of $\RR_\lambda$, we can solve
  \begin{equation*}
    \Delta_{\gr{v}} u_\lambda = \mathcal{F}_\lambda(f)_{\RR},
    \quad u_\lambda \in E_\lambda,
  \end{equation*}
  for each $\lambda \in \sigma(\Delta_G)$, hence
  \begin{equation*}
    f_{\bar{\RR}}
    =
    \sum_{\lambda \in \sigma(\Delta_G)} \mathcal{F}_\lambda(f)_{\RR}
    =
    \lim_{\nu \to \infty}
    \sum_{\substack{\lambda \in \sigma(\Delta_G) \\ |\lambda| \leq \nu}} \Delta_{\gr{v}} u_\lambda
  \end{equation*}
  with convergence in $\cinfty(G)$: we conclude that $f_{\bar{\RR}} \in \bar{\RR}(G)$, since as each truncated sum above belongs to $\RR(G)$.
  
  Finally, to check continuity of the projections $\pi_{\K}, \pi_{\bar{\RR}}$ (we endow $\K(G), \bar{\RR}(G)$ with their subspace topologies inherited from $\cinfty(G)$, which turn them into Fr{\'e}chet spaces), we make use of adapted Sobolev norms: for $f \in \cinfty(G)$ and $k \in \Z_+$, let
  \begin{equation*}
    \| f\|_{\sob^k(G)} \dfn \| (I + \Delta_G)^k f \|_{L^2(G)}
    =
    \Big(  \sum_{\lambda \in \sigma(\Delta_G)} (1 + \lambda)^{2k} \| \mathcal{F}_\lambda(f) \|_{L^2(G)}^2 \Big)^{\frac{1}{2}};
  \end{equation*}
  they generate the topology of $\cinfty(G)$ and, given $\mathscr{F} \in \{ \K, \bar{\RR} \}$, it is clear that
  \begin{equation*}
    \| \pi_{\mathscr{F}} (f) \|_{\sob^k(G)}^2
    \leq \| \pi_{\K} (f) \|_{\sob^k(G)}^2 +  \| \pi_{\bar{\RR}} (f) \|_{\sob^k(G)}^2
    = \| f \|_{\sob^k(G)}^2,
  \end{equation*}
  from which continuity of $\pi_{\mathscr{F}}$ is immediate.
\end{proof}

We extend the previous result to forms:
\begin{Cor}
  \label{cor:direct_sum_closed_range_q}
  We have $\cinfty(G; \LLambda^{q}) = \K(G; \LLambda^{q}) \oplus \bar{\RR}(G; \LLambda^{q})$ as a sum of closed subspaces. The associated projections $\pi_{\K}, \pi_{\bar{\RR}}$ are continuous, and commute with $\dd'$.
\end{Cor}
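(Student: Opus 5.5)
The plan is to apply Proposition~\ref{prop:direct_sum_closed_range} coefficientwise and then use the commutation criterion~\eqref{eq:comm_A_dprime} to get compatibility with $\dd'$. Since $\cinfty(G; \LLambda^{q})$ is topologically isomorphic to $\cinfty(G)^{N}$, with $N = \binom{n}{q}$, via $u \mapsto (u_J)_J$, we extend $\pi_{\K}$ and $\pi_{\bar{\RR}}$ to forms by letting them act coefficientwise, as described in Section~\ref{secPreliminaries}.

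The algebraic part is then immediate. For $u$ as in~\eqref{eq:urepglobal}, we have $u = \pi_{\K} u + \pi_{\bar{\RR}} u$, with $\pi_{\K} u \in \K(G; \LLambda^{q})$ and $\pi_{\bar{\RR}} u \in \bar{\RR}(G; \LLambda^{q})$. The sum is direct because $\K(G) \cap \bar{\RR}(G) = \{0\}$ holds coefficientwise. Both subspaces are closed: $\K(G)$ is the kernel of the continuous map $\Delta_{\gr{v}}$, and $\bar{\RR}(G)$ is closed by definition, so their $N$-fold products are closed in $\cinfty(G)^N$. Continuity of the extended projections likewise follows from continuity of the scalar projections, which is part of Proposition~\ref{prop:direct_sum_closed_range}.

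The substantive step is commutation with $\dd'$. By~\eqref{eq:comm_A_dprime}, it suffices to show that the scalar projection $\pi_{\K}$ commutes with every $\vv{L} \in \gr{v}$ on functions; then $\pi_{\bar{\RR}} = I - \pi_{\K}$ does as well. Take $f \in \cinfty(G)$ and write $f = f_{\K} + f_{\bar{\RR}}$. Then
\begin{equation*}
  \vv{L} f = \vv{L} f_{\K} + \vv{L} f_{\bar{\RR}}.
\end{equation*}
Here $\vv{L} f_{\K} = 0$, because $f_{\K}$ is annihilated by every section of $\VV$. Moreover, $\vv{L} f_{\bar{\RR}} \in \bar{\RR}(G)$, because $\bar{\RR}(G)$ is $\gr{v}$-invariant; this was noted right after~\eqref{eq:Delta_range} as a consequence of Lemma~\ref{lem:lap_commutes} and continuity. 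Uniqueness of the decomposition then gives
\begin{equation*}
  \pi_{\K}(\vv{L} f) = 0 = \vv{L}\, \pi_{\K}(f)
  \quad\text{and}\quad
  \pi_{\bar{\RR}}(\vv{L} f) = \vv{L} f_{\bar{\RR}} = \vv{L}\, \pi_{\bar{\RR}}(f).
\end{equation*}
So both projections commute with all $\vv{L} \in \gr{v}$, and~\eqref{eq:comm_A_dprime} shows that their coefficientwise extensions commute with $\dd'$.

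The only point needing care is that~\eqref{eq:comm_A_dprime} is stated for arbitrary sections $\vv{X}_k$ of $\VV$, not only for left-invariant ones. Two routes handle this. The first is to evaluate only on the frame $\vv{L}_{j}$, which determines $\dd' u$ completely. The second is to observe that $[f\vv{L}, A] = f[\vv{L}, A]$ for any smooth function $f$, so commutation with $\gr{v}$ implies commutation with all sections of $\VV$. Either way the argument closes.
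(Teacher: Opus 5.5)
Your proof is correct, and it uses the same mechanism as the paper: $\gr{v}$-invariance of $\K(G)$ and $\bar{\RR}(G)$, plus uniqueness of the decomposition. The difference is where you apply it.

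The paper works directly on forms. By \eqref{eq:isom_dprime3}, $\dd'$ maps $\mathscr{F}(G;\LLambda^{q})$ into $\mathscr{F}(G;\LLambda^{q+1})$ for every $\gr{v}$-invariant $\mathscr{F}$. Hence $\dd' u = \dd'\pi_{\K}(u) + \dd'\pi_{\bar{\RR}}(u)$ is already the decomposition of $\dd' u$ in degree $q+1$, and \eqref{eq:comm_A_dprime} is never needed.

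You instead run the uniqueness argument on scalars. This gives $[\vv{L},\pi_{\K}] = 0$ for all $\vv{L}\in\gr{v}$, a slightly stronger intermediate fact, which you then lift to forms.

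For that lift, keep only your first route. Formula \eqref{eq:comm_A_dprime} is justified when the $\vv{X}_k$ lie in $\gr{v}$, since then $(Au)(\vv{X}_1,\ldots,\vv{X}_q) = A(u(\vv{X}_1,\ldots,\vv{X}_q))$ because the coefficients involved are constants. The frame $\vv{L}_1,\ldots,\vv{L}_n$ then determines $\dd' u$, so evaluating on it suffices.

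Your second route is wrong. One has $[g\vv{L},A]h - g[\vv{L},A]h = g\,A(\vv{L}h) - A(g\,\vv{L}h)$. This vanishes only if $A$ commutes with multiplication by $g$, and $\pi_{\K}$ does not: for instance, when $\K(G)=\C_G$, $\pi_{\K}$ is the mean value. So ``either way the argument closes'' overstates things, but the proof stands on route (1) alone.
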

\begin{proof}
  It is clear that $\K(G; \LLambda^{q}) \cap \bar{\RR}(G; \LLambda^{q}) = \{ 0 \}$ due to their coefficientwise definition. We write $u \in \cinfty(G; \LLambda^{q})$ as
  \begin{equation*}
    u
    = \psum_{|J| = q} u_J \ \tau_J
    = \psum_{|J| = q} \left( \pi_{\K} (u_J) + \pi_{\bar{\RR}} (u_J) \right)  \ \tau_J,
  \end{equation*}
  hence $u = \pi_{\K} (u) + \pi_{\bar{\RR}} (u)$, where
  \begin{equation*}
    \pi_{\mathscr{F}} (u) \dfn
    \psum_{|J| = q} \pi_{\mathscr{F}} (u_J) \tau_J \in \mathscr{F}(G; \LLambda^{q}),
    \quad \mathscr{F} \in \{ \K, \bar{\RR} \}.
  \end{equation*}
  All the claims are now obvious. For the last one, notice that $\dd' u = \dd' \pi_{\K} (u) + \dd' \pi_{\bar{\RR}} (u)$, where
  \begin{equation*}
    \dd' \pi_{\mathscr{F}} (u) \in \mathscr{F}(G; \LLambda^{q + 1}),
    \quad \mathscr{F} \in \{ \K, \bar{\RR} \},
  \end{equation*}
  as both $\K(G), \bar{\RR}(G)$ are $\gr{v}$-invariant spaces. Uniqueness in the direct sum decomposition $\cinfty(G; \LLambda^{q + 1}) = \K(G; \LLambda^{q + 1}) \oplus \bar{\RR}(G; \LLambda^{q + 1})$ shows that $\dd' \pi_{\mathscr{F}} (u) = \pi_{\mathscr{F}} (\dd' u)$.
\end{proof}

\section{Decompositions in cohomology}
\label{sec:DecompositionCohomology}

Given two $\gr{v}$-invariant subspaces $\mathscr{F}(G) \sset \mathscr{F}^\bb(G) \sset \cinfty(G)$, the continuous inclusion maps $\mathscr{F}(G; \LLambda^q) \hookrightarrow \mathscr{F}^\bb(G; \LLambda^q)$ define chain maps, hence induce linear maps in cohomology (recall their definition in~\eqref{eq:cohomology_F})
\begin{equation*}
  H^q_{\dd'}(\mathscr{F}(G)) \lra H^q_{\dd'}(\mathscr{F}^\bb(G)),
  \quad q \in \{0, \ldots, n\},
\end{equation*}
continuous for the quotient topologies; they may be neither injective nor surjective.

For $\mathscr{F} \in \{ \K, \bar{\RR} \}$, we constructed in the previous section projections $\pi_{\mathscr{F}}: \cinfty(G; \LLambda^q) \to \mathscr{F}(G; \LLambda^q)$, which are also continuous chain maps, and hence induce continuous linear maps in cohomology:
\begin{equation*}
  H^q_{\dd'}(\cinfty(G)) \lra H^q_{\dd'}(\mathscr{F}(G)),
  \quad q \in \{0, \ldots, n\}.
\end{equation*}
Since the composition
\begin{equation*}
  \begin{tikzcd}
    \mathscr{F}(G; \LLambda^q) \arrow[hookrightarrow]{r} &
    \cinfty(G; \LLambda^q) \arrow{r}{\pi_{\mathscr{F}}} &
    \mathscr{F}(G; \LLambda^q)
  \end{tikzcd}
\end{equation*}
is the identity map, so is the induced composition
\begin{equation}
  \label{eq:diagram-proj-cohom}
  \begin{tikzcd}
    H^q_{\dd'}(\mathscr{F}(G)) \arrow{r} &
    H^q_{\dd'}(\cinfty(G)) \arrow{r} &
    H^q_{\dd'}(\mathscr{F}(G)),
  \end{tikzcd}
\end{equation}
thus making the first (resp.~second) arrow in~\eqref{eq:diagram-proj-cohom} injective (resp.~surjective). In this sense, we regard both $H^q_{\dd'}(\K(G))$, $H^q_{\dd'}(\bar{\RR}(G))$ as linear subspaces of $H^q_{\dd'}(\cinfty(G))$: algebraically, we have the decomposition
\begin{equation}
  \label{eq:direct_sum_cohomology}
  H^q_{\dd'}(\cinfty(G)) \cong H^q_{\dd'}(\K(G)) \oplus H^q_{\dd'}(\bar{\RR}(G))
\end{equation}
whose factors we now investigate.

Consider\footnote{\label{f1} Here, $\C_G$ stands for the space of constant functions on $G$, and $\C_G(G; \LLambda^{q})$ for the (finite dimensional) space of left-invariant forms $u$ in $\cinfty(G; \LLambda^{q})$; i.e., those whose coefficients $u_J$ in representation~\eqref{eq:urepglobal} are all \emph{constant}.} the linear map $\mathsf{T}: \C_G(G; \LLambda^{q}) \to \C_G(G; \LLambda^{q+1})$ defined by
\begin{equation*}
  \mathsf{T} \bigg( \psum_{|J| = q} c_J \ \tau_J \bigg)
  \dfn \psum_{|K| = q + 1} \bigg( \psum_{|J| = q}  \alpha_{JK} c_{J} \bigg) \ \tau_K
\end{equation*}
where $\alpha_{JK} \in \C$ are given by~\eqref{eq:structure-ctes}, i.e.:
\begin{equation}
  \label{eq:structure-ctes2}
  \dd' \tau_J = \psum_{|K| = q + 1} \alpha_{JK} \ \tau_K.
\end{equation}
By standard arguments of Linear Algebra, one can find a linear map
\begin{equation*}
  \mathsf{S}: \C_G(G; \LLambda^{q+1}) \lra \C_G(G; \LLambda^{q})
\end{equation*}
such that $\mathsf{T} \circ \mathsf{S}$  is the identity on $\ran \mathsf{T}$.
\begin{Prop}
  \label{prop:algebraic_part_closed_range}
  The range of $\dd': \K(G; \LLambda^{q}) \to \K(G; \LLambda^{q + 1})$   is always closed. That is: $H^q_{\dd'}(\K(G))$ is Hausdorff (hence, a Fr{\'e}chet space) for each $q \in \{0, \ldots, n\}$.
\end{Prop}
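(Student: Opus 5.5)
On $\K(G;\LLambda^q)$ the operator $\dd'$ acts only through the structure constants, and the plan is to exploit this. If $u = \psum u_J \tau_J$ has all $u_J \in \K(G)$, then $\vv{L}_j u_J = 0$ for every $j$. Hence formula~\eqref{eq:isom_dprime3} reduces to
\begin{equation*}
  \dd' u = \psum_{|K| = q+1} \Big( \psum_{|J| = q} \alpha_{JK} u_J \Big) \tau_K .
\end{equation*}
Thus $\dd'$ restricted to $\K(G; \LLambda^q)$ is $\mathsf{T}$ applied pointwise: it is $\mathrm{id}_{\K(G)} \otimes \mathsf{T}$ under the identification $\K(G; \LLambda^q) \cong \K(G) \otimes \C_G(G; \LLambda^q)$. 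The latter identification holds because the $\tau_J$ form a global frame and $\K(G)$ is a closed subspace of $\cinfty(G)$.

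I would then prove closedness of the range directly, using the map $\mathsf{S}$ with $\mathsf{T}\mathsf{S} = \mathrm{id}$ on $\ran \mathsf{T}$. Extend $\mathsf{S}$ coefficientwise in the same way. Its entries are constants, so it maps $\K(G; \LLambda^{q+1})$ continuously into $\K(G; \LLambda^q)$.

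Now let $u_\nu \in \K(G;\LLambda^q)$ be such that $\dd' u_\nu \to w$ in $\cinfty(G;\LLambda^{q+1})$. Since $\K(G; \LLambda^{q+1})$ is closed, $w$ lies in it. I would show that $w$ takes values pointwise in $\ran \mathsf{T}$. Each $\dd' u_\nu(x) = \mathsf{T}(u_\nu(x))$ lies in the finite-dimensional, hence closed, subspace $\ran \mathsf{T}$ of $\wedge^{q+1}$ of the fibre, where fibres are identified with $\C_G(G;\LLambda^{q+1})$ via the frame. Pointwise convergence then gives $w(x) \in \ran \mathsf{T}$ for every $x \in G$.

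Put $v \dfn \mathsf{S} w \in \K(G; \LLambda^q)$. Then $\dd' v = \mathsf{T}(\mathsf{S} w) = w$ pointwise, so $w \in \ran \dd'$ and the range is closed. Equivalently, $\ran\{\dd'\} = \{ w \in \K(G;\LLambda^{q+1}) \st P w = 0 \}$, where $P$ is a constant-coefficient projection onto a complement of $\ran \mathsf{T}$; this is manifestly closed.

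The Hausdorff and Fréchet conclusion then follows. The kernel of $\dd'$ on $\K(G;\LLambda^q)$ is closed, hence Fréchet, and the quotient of a Fréchet space by a closed subspace is Fréchet.

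I do not expect a real obstacle. The only point needing care is the reduction of $\dd'$ on $\K$ to the constant matrix $(\alpha_{JK})$. This requires that $\vv{L}_j u_J = 0$ kill the first sum in~\eqref{eq:isom_dprime3}, which holds because the $\vv{L}_j$ span $\gr{v}$ and hence all sections of $\VV$ over $\cinfty(G)$. One must also check that the coefficientwise extension of $\mathsf{S}$ preserves $\K$, which holds since its entries are constants.
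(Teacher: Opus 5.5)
Your proposal is correct and follows essentially the same route as the paper: on $\K(G;\LLambda^q)$ you reduce $\dd'$ to the pointwise action of the constant map $\mathsf{T}$, use finite dimensionality of $\ran\mathsf{T}$ to see that the limit takes values in $\ran\mathsf{T}$ at every point, and then obtain a preimage by applying the constant-coefficient right inverse $\mathsf{S}$, which preserves $\K$. Your description of the range as the kernel of a constant-coefficient projection is just a compact restatement of that same argument.
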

\begin{proof}
  By means of~\eqref{eq:urepglobal}, we interpret elements in $\cinfty(G; \LLambda^q)$ as smooth functions on $G$ valued on the finite dimensional vector space $\C_G(G; \LLambda^{q})$. In that sense, we note that if $u \in  \K(G; \LLambda^{q})$ then for every $x\in G$ have
  \begin{equation}
    \label{imagemdedlinha}
    (\dd' u)(x)
    =  \psum_{|J| = q} \psum_{|K| = q + 1} \alpha_{JK} u_{J}(x) \ \tau_K
    = \mathsf{T}(u(x)).
  \end{equation}

  Now let  $\{u_\nu\}_{\nu \in \N} \sset \K(G; \LLambda^{q})$  be such that $\dd' u_\nu \to f \in \K(G; \LLambda^{q + 1})$. By~\eqref{eq:isom_dprime3}:
  \begin{equation}
    \label{eq:closedness_algebraic_1}
    \psum_{|K| = q + 1} \bigg( \psum_{|J| = q}  \alpha_{JK} (u_\nu)_{J} \bigg) \ \tau_K
    \lra \psum_{|K| = q + 1} f_K \ \tau_K
    \quad
    \text{in $\cinfty(G; \LLambda^{q + 1})$}.
  \end{equation}
  If we think pointwise, a consequence  of~\eqref{eq:closedness_algebraic_1} is that, for every  $x \in G$ fixed,
  \begin{equation*}
    \mathsf{T} (u_\nu(x)) \lra f(x)
    \quad
    \text{in $\C_G(G; \LLambda^{q+1})$}.
  \end{equation*}
  By finite dimensionality of $\C_G(G; \LLambda^{q+1})$,  we conclude that $f(x) \in \ran \mathsf{T}$ for every $x \in G$. For each   $x \in G$  we define $u(x) \dfn \mathsf{S}(f(x)) \in \C_G(G; \LLambda^{q})$: the assignment
  \begin{equation*}
    x \in G \longmapsto u(x) \in \C_G(G; \LLambda^{q})
  \end{equation*}
  is obviously smooth -- hence defines an element $u \in \cinfty(G; \LLambda^{q})$ --, and satisfies $\mathsf{T}(u(x)) = f(x)$ for every $x \in G$. By the definition of $\mathsf{S}$, the coefficients of $u$ with respect to the basis $\tau_J$ with $|J|=q$ are linear combinations with complex coefficients of the coefficients $f$, and since $f \in \K(G; \LLambda^{q+1})$ the coefficients of $u$ are also in $\K(G)$, and, therefore, $u \in \K(G; \LLambda^{q})$. We conclude from~\eqref{imagemdedlinha} that $\dd' u = f$. 
\end{proof}

\begin{Rem}[A representation formula for $H^q_{\dd'}(\K(G))$]
  \label{rem:rep-formula-tensor-K}
  Notice that, for each $q$,
  \begin{equation}
    \label{eq:K-as-tensor}
    \K(G; \LLambda^q) \cong \K(G) \otimes \C_G(G; \LLambda^{q})
  \end{equation}
  (this and other isomorphisms below are meant as topological linear). Indeed, since $\C_G(G; \LLambda^{q})$ is a finite dimensional vector space, we may take $\theta_1^{(q)}, \ldots, \theta_{N_q}^{(q)}$ a linear basis for it and represent every element in $\K(G) \otimes \C_G(G; \LLambda^{q})$ uniquely as
  \begin{equation*}
    \sum_{j = 1}^{N_q} f_j \otimes \theta_j^{(q)},
    \quad f_j \in \K(G),
  \end{equation*}
  to which we assign the $q$-form $\sum_{j = 1}^{N_q} f_j \theta_j^{(q)} \in \K(G; \LLambda^q)$. This assignment is obviously linear, continuous, injective and, moreover, surjective: any $u \in \K(G; \LLambda^q)$ is written as~\eqref{eq:urepglobal} with $u_J \in \K(G)$ each ordered multi-index $J = (j_1, \ldots, j_q)$; writing each $\tau_J \in \C_G(G; \LLambda^{q})$ as a linear combination of $\theta_1^{(q)}, \ldots, \theta_{N_q}^{(q)}$ settles the claim. Continuity of its inverse is evident.

  Also, as far as~\eqref{eq:K-as-tensor} is concerned, $\dd': \K(G; \LLambda^q) \to \K(G; \LLambda^{q + 1})$ is represented as
  \begin{equation*}
    1 \otimes \dd': \K(G) \otimes \C_G(G; \LLambda^{q}) \lra \K(G) \otimes \C_G(G; \LLambda^{q + 1}),
  \end{equation*}
  thanks to the obvious identity $\dd' (f \theta) = f \dd' \theta$ whenever $f \in \K(G)$; in that sense, \eqref{eq:K-as-tensor} is a chain map, hence an isomorphism between differential complexes. Thus:
  \begin{align*}
    H^q_{\dd'}(\K(G))
    &\cong
      \frac{ \ker \{ 1 \otimes \dd': \K(G) \otimes \C_G(G; \LLambda^{q}) \lra \K(G) \otimes \C_G(G; \LLambda^{q + 1}) \} }
      { \ran \{ 1 \otimes \dd': \K(G) \otimes \C_G(G; \LLambda^{q - 1}) \lra \K(G) \otimes \C_G(G; \LLambda^{q}) \} } \\
    &=
      \frac{ \K(G) \otimes \ker \{ \dd': \C_G(G; \LLambda^{q}) \lra \C_G(G; \LLambda^{q + 1}) \} }
      { \K(G) \otimes \ran \{ \dd': \C_G(G; \LLambda^{q - 1}) \lra  \C_G(G; \LLambda^{q}) \} }.
  \end{align*}
  Finally, it follows from~\cite[Proposition~2.18]{am_ica} that exactness of the short sequence of vector spaces
  \begin{multline*}
    \ran \{ \dd': \C_G(G; \LLambda^{q - 1}) \to \C_G(G; \LLambda^{q}) \} \xrightarrow{\text{incl.}} \\
    \ker \{ \dd': \C_G(G; \LLambda^{q}) \to \C_G(G; \LLambda^{q + 1}) \} \xrightarrow{\text{proj.}}
    H^q_{\dd'}(\C_G) \lra 0
  \end{multline*}
  entails exactness of
  \begin{multline*}
    \K(G) \otimes \ran \{ \dd': \C_G(G; \LLambda^{q - 1}) \to \C_G(G; \LLambda^{q}) \} \xrightarrow{1 \otimes \text{incl.}} \\
    \K(G) \otimes \ker \{ \dd': \C_G(G; \LLambda^{q}) \to \C_G(G; \LLambda^{q + 1}) \}  \xrightarrow{1 \otimes \text{proj.}}
    \K(G) \otimes H^q_{\dd'}(\C_G) \lra 0,
  \end{multline*}
  where, clearly, all the spaces involved are Fr{\'e}chet and the arrows are continuous. The last arrow $1 \otimes \text{proj.}$~in the sequence above is onto, and descends to the quotient as a continuous linear isomorphism
  \begin{equation*}
    \frac{ \K(G) \otimes \ker \{ \dd': \C_G(G; \LLambda^{q}) \lra \C_G(G; \LLambda^{q + 1}) \} }
    { \K(G) \otimes \ran \{ \dd': \C_G(G; \LLambda^{q - 1}) \lra \C_G(G; \LLambda^{q}) \} }
    \cong
    \K(G) \otimes H^q_{\dd'}(\C_G)
  \end{equation*}
  whose inverse is continuous by the Open Mapping Theorem. Together, both isomorphisms prove that
  \begin{equation*}
    H^q_{\dd'}(\K(G)) \cong \K(G) \otimes H^q_{\dd'}(\C_G).
  \end{equation*}
\end{Rem}

Next, we delve into the structure of the second factor in~\eqref{eq:direct_sum_cohomology}.
\begin{Prop}
  \label{prop:RbarR}
  Let $q \in \{1, \ldots, n\}$. If $f \in \RR(G; \LLambda^q)$ is such that $\dd' f = 0$ then there exists $v \in \bar{\RR}(G; \LLambda^{q - 1})$ such that $\dd' v = f$. That is, the natural map
  \begin{equation}
    \label{eq:RbarR}
    H^q_{\dd'}(\RR(G)) \lra H^q_{\dd'}(\bar{\RR}(G))
  \end{equation}
  is zero.
\end{Prop}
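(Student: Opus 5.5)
The plan is to write $f$ as $\Delta_{\gr{v}}$ applied to a $\dd'$-closed form with coefficients in $\bar{\RR}(G)$, and then invoke Lemma~\ref{lem:ce} with $\mathscr{F} = \bar{\RR}$.

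\emph{Step 1: write $f = \Delta_{\gr{v}} g$.} Since $f \in \RR(G; \LLambda^q)$, each coefficient $f_J$ lies in the range of $\Delta_{\gr{v}}$. Choose $g_J \in \cinfty(G)$ with $\Delta_{\gr{v}} g_J = f_J$ and set $g \dfn \psum_{|J|=q} g_J \, \tau_J$. Then $f = \Delta_{\gr{v}} g$, with $\Delta_{\gr{v}}$ acting coefficientwise.

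\emph{Step 2: pass to the $\bar{\RR}$-component.} Let $h \dfn \pi_{\bar{\RR}}(g) \in \bar{\RR}(G; \LLambda^q)$, using Corollary~\ref{cor:direct_sum_closed_range_q}. The coefficients of $\pi_{\K}(g)$ lie in $\K(G) = \ker \Delta_{\gr{v}}$, so $\Delta_{\gr{v}} \pi_{\K}(g) = 0$. Hence $\Delta_{\gr{v}} h = \Delta_{\gr{v}} g = f$.

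\emph{Step 3: show $\dd' h = 0$.} By Lemma~\ref{lem:lap_commutes}, $\Delta_{\gr{v}}$ commutes with every $\vv{L} \in \gr{v}$. By~\eqref{eq:comm_A_dprime}, its coefficientwise extension therefore commutes with $\dd'$. Thus
\begin{equation*}
  \Delta_{\gr{v}} (\dd' g) = \dd' (\Delta_{\gr{v}} g) = \dd' f = 0,
\end{equation*}
so every coefficient of $\dd' g$ lies in $\ker \Delta_{\gr{v}} = \K(G)$, i.e.\ $\dd' g \in \K(G; \LLambda^{q+1})$. Since $\pi_{\bar{\RR}}$ commutes with $\dd'$ (Corollary~\ref{cor:direct_sum_closed_range_q}), we get
\begin{equation*}
  \dd' h = \pi_{\bar{\RR}} (\dd' g) = 0,
\end{equation*}
because $\pi_{\bar{\RR}}$ annihilates $\K(G; \LLambda^{q+1})$.

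\emph{Step 4: conclude.} The space $\bar{\RR}(G)$ is $\gr{v}$-invariant (noted before Proposition~\ref{prop:direct_sum_closed_range}), and $h \in \bar{\RR}(G; \LLambda^q)$ satisfies $\dd' h = 0$. Lemma~\ref{lem:ce} with $\mathscr{F} = \bar{\RR}$ therefore gives $v \in \bar{\RR}(G; \LLambda^{q-1})$ with
\begin{equation*}
  \dd' v = \Delta_{\gr{v}} h = f.
\end{equation*}
This shows the map~\eqref{eq:RbarR} is zero.

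The only delicate point is Step~3. One must not apply Lemma~\ref{lem:ce} to $g$ directly, because $g$ need not be $\dd'$-closed. Projecting onto the $\bar{\RR}$-component removes the defect $\dd' g \in \K(G; \LLambda^{q+1})$ while leaving $\Delta_{\gr{v}} g$ unchanged. With that in place, the remaining steps only use the commutation properties already established.
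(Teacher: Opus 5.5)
Your proof is correct and is essentially the paper's argument: write $f = \Delta_{\gr{v}} g$, replace $g$ by its $\bar{\RR}$-component, note that $\dd'$ of the projected form is zero because of $\K$/$\bar{\RR}$ considerations, and apply Lemma~\ref{lem:ce} with $\mathscr{F} = \bar{\RR}$. The only cosmetic difference is in how $\dd' h = 0$ is obtained. You show $\dd' g \in \K(G;\LLambda^{q+1})$ for the unprojected $g$ and then use $\pi_{\bar{\RR}} \dd' = \dd' \pi_{\bar{\RR}}$. The paper instead computes $\Delta_{\gr{v}}(\dd' h) = 0$ directly for the projected form and concludes from $\dd' h \in \K(G;\LLambda^{q+1}) \cap \bar{\RR}(G;\LLambda^{q+1}) = \{0\}$.
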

\begin{proof}
  Write a $\dd'$-closed $f \in \RR(G; \LLambda^{q})$ in terms of coefficients $f_J \in \RR(G)$. For each $J$, we have $f_J = \Delta_{\gr{v}} g_J$ for some $g_J \in \cinfty(G)$. Replacing $g_J$ by $\pi_{\bar{\RR}} (g_J)$, we may assume  without loss of generality that $g_J \in \bar{\RR}(G)$, and define
  \begin{equation*}
    g \dfn \psum_{|J| = q} g_J \ \tau_J \in \bar{\RR}(G; \LLambda^{q}),
  \end{equation*}
  which obviously solves $\Delta_{\gr{v}} g = f$. Moreover
  \begin{equation*}
    0
    =  \dd' f
    = \dd' (\Delta_{\gr{v}} g)
    = \Delta_{\gr{v}} (\dd' g),
  \end{equation*}
  which implies that $\dd' g \in \K(G; \LLambda^{q + 1})$. Since also $\dd' g \in \bar{\RR}(G; \LLambda^{q + 1})$, we conclude that $\dd' g = 0$, and therefore Lemma~\ref{lem:ce} ensures the existence of a $v \in \bar{\RR}(G; \LLambda^{q - 1})$ solving $\dd' v = \Delta_{\gr{v}} g = f$.
\end{proof}

\section{Globally solvable structures}
\label{sec:GlobalSolvableStructures}

\begin{Thm}
  \label{thm:rep_in_K}
  If $\dd': \cinfty(G) \to \cinfty(G; \LLambda^{1})$ has closed range then
  \begin{equation*}
    H_{\dd'}^q (\bar{\RR}(G)) = 0,
    \quad \forall q \in \{0, \ldots, n\},
  \end{equation*}
  and in that case the natural map
  \begin{equation}
    \label{eq:Kintocinfty_cohom}
    H^q_{\dd'}(\K(G)) \lra H^q_{\dd'}(\cinfty(G))
  \end{equation}
  is a topological isomorphism; in particular, $H^q_{\dd'}(\cinfty(G))$ is a Fr{\'e}chet space. Moreover, every $[f] \in H^{q}_{\dd'}(\cinfty(G))$ has a representative $\pi_{\K} (f) \in \K(G; \LLambda^{q})$.
\end{Thm}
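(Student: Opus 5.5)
The plan is to first show that $\bar{\RR}(G) = \RR(G)$ under the hypothesis, and then use Proposition~\ref{prop:RbarR} to kill all of $H^q_{\dd'}(\bar{\RR}(G))$. By~\cite[Proposition~3.11]{afr22}, closed range of $\dd'$ on functions is equivalent to $\Delta_{\gr{v}}: \cinfty(G) \to \cinfty(G)$ having closed range. Hence $\RR(G)$ is closed, so $\bar{\RR}(G) = \RR(G)$ and $\bar{\RR}(G; \LLambda^q) = \RR(G; \LLambda^q)$ for every $q$.

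Now take a $\dd'$-closed $f \in \bar{\RR}(G; \LLambda^q)$. If $q \geq 1$, then $f \in \RR(G; \LLambda^q)$, and Proposition~\ref{prop:RbarR} produces $v \in \bar{\RR}(G; \LLambda^{q-1})$ with $\dd' v = f$. Thus $H^q_{\dd'}(\bar{\RR}(G)) = 0$. For $q = 0$, a $\dd'$-closed $f \in \bar{\RR}(G)$ satisfies $\vv{L}_j f = 0$ for all $j$, so $f \in \K(G) \cap \bar{\RR}(G) = \{0\}$ by Proposition~\ref{prop:direct_sum_closed_range}.

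Next, the algebraic decomposition~\eqref{eq:direct_sum_cohomology} shows that the natural map~\eqref{eq:Kintocinfty_cohom} is a linear bijection. To see this concretely, let $[f] \in H^q_{\dd'}(\cinfty(G))$ with $\dd' f = 0$. Write $f = \pi_{\K}(f) + \pi_{\bar{\RR}}(f)$. By Corollary~\ref{cor:direct_sum_closed_range_q} the projections commute with $\dd'$, so $\pi_{\bar{\RR}}(f)$ is $\dd'$-closed in $\bar{\RR}(G; \LLambda^q)$. By the previous step it is therefore $\dd' v$ for some $v \in \bar{\RR}(G;\LLambda^{q-1})$, which gives $[f] = [\pi_{\K}(f)]$ and proves surjectivity together with the representative claim. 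Injectivity follows from the retraction~\eqref{eq:diagram-proj-cohom}: the map induced by $\pi_{\K}$ is a left inverse of~\eqref{eq:Kintocinfty_cohom}.

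The only point needing care is the topological part. The map~\eqref{eq:Kintocinfty_cohom} is continuous, and its inverse is the map induced by the continuous chain map $\pi_{\K}$, which is continuous on the quotient topologies. Hence~\eqref{eq:Kintocinfty_cohom} is a topological isomorphism. By Proposition~\ref{prop:algebraic_part_closed_range}, $H^q_{\dd'}(\K(G))$ is Hausdorff, hence Fréchet, so $H^q_{\dd'}(\cinfty(G))$ is Fréchet as well. Equivalently, the range of $\dd'$ in $\cinfty(G;\LLambda^q)$ is closed.
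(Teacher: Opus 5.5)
Your proposal is correct and follows essentially the same route as the paper: closed range gives $\RR(G)=\bar{\RR}(G)$, Proposition~\ref{prop:RbarR} then kills $H^q_{\dd'}(\bar{\RR}(G))$, and the decomposition~\eqref{eq:direct_sum_cohomology} together with the retraction induced by $\pi_{\K}$ yields the topological isomorphism and the representative $\pi_{\K}(f)$. Your separate treatment of $q=0$ via $\K(G)\cap\bar{\RR}(G)=\{0\}$ is a welcome extra detail: Proposition~\ref{prop:RbarR} is only stated for $q\geq 1$, and the paper leaves the $q=0$ case implicit.
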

\begin{proof}
  On the one hand, we know from Proposition~\ref{prop:RbarR} that the natural map~\eqref{eq:RbarR} is always zero. On the other hand, the hypothesis of closedness of the range of $\dd'$ is equivalent (as already mentioned~\cite[Proposition~3.11]{afr22}) to the closedness of~\eqref{eq:Delta_range} in $\cinfty(G)$, i.e.,~$\RR(G) = \bar{\RR}(G)$, in which case~\eqref{eq:RbarR} is the identity: the first conclusion follows at once.

  Now, by the direct sum decomposition~\eqref{eq:direct_sum_cohomology}, our first conclusion entails that the map~\eqref{eq:Kintocinfty_cohom} is onto; we already knew that it was a continuous injection. Hence, \eqref{eq:Kintocinfty_cohom} is a continuous linear isomorphism. Its inverse is precisely the map induced by $\pi_{\K}$ in cohomology, hence continuous as well.
  
  The final claim is simply a restatement of the surjectivity of~\eqref{eq:Kintocinfty_cohom}.
\end{proof}
\begin{Cor}
  \label{cor:hausdorff_cohomology_all}
  If $\dd': \cinfty(G) \to \cinfty(G; \LLambda^{1})$ has closed range then so does $\dd': \cinfty(G; \LLambda^{q - 1}) \to \cinfty(G; \LLambda^{q})$ for every $q \in \{2, \ldots, n\}$.
\end{Cor}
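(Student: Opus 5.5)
The plan is to deduce the corollary from Theorem~\ref{thm:rep_in_K} together with Proposition~\ref{prop:algebraic_part_closed_range}, using the continuous projection $\pi_{\K}$ of Corollary~\ref{cor:direct_sum_closed_range_q}. Closedness of the range of $\dd'$ in degree $q$ is the same as the cohomology $H^q_{\dd'}(\cinfty(G))$ being Hausdorff. Theorem~\ref{thm:rep_in_K} already states that, under our hypothesis, this space is topologically isomorphic to $H^q_{\dd'}(\K(G))$, which is Hausdorff by Proposition~\ref{prop:algebraic_part_closed_range}. To be safe about what ``topological isomorphism'' encodes, I would also verify the closedness of the range directly, as follows.

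Take a sequence $u_\nu \in \cinfty(G;\LLambda^{q-1})$ with $\dd' u_\nu \to f$ in $\cinfty(G;\LLambda^q)$. The limit satisfies $\dd' f = 0$ by continuity of $\dd'$, so $f$ defines a class in $H^q_{\dd'}(\cinfty(G))$.

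First, by the last assertion of Theorem~\ref{thm:rep_in_K} (equivalently, since $H^q_{\dd'}(\bar{\RR}(G))=0$), the component $\pi_{\bar{\RR}}(f)$, which is $\dd'$-closed because $\pi_{\bar{\RR}}$ commutes with $\dd'$, equals $\dd' w$ for some $w \in \bar{\RR}(G;\LLambda^{q-1})$.

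Second, since $\pi_{\K}$ is continuous and commutes with $\dd'$, we get $\dd'(\pi_{\K} u_\nu) = \pi_{\K}(\dd' u_\nu) \to \pi_{\K}(f)$, with $\pi_{\K} u_\nu \in \K(G;\LLambda^{q-1})$. Proposition~\ref{prop:algebraic_part_closed_range} says that the range of $\dd'$ restricted to $\K$-forms is closed. Hence $\pi_{\K}(f) = \dd' z$ for some $z \in \K(G;\LLambda^{q-1})$.

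Then $f = \dd'(w+z)$, so the range is closed. The only point needing care is this second step: one must pass to $\pi_{\K}$ rather than treat $f$ directly, and it is the continuity of the projection together with the commutation $\pi_{\K}\dd' = \dd'\pi_{\K}$ that makes this work. Beyond that, no genuine obstacle remains, since all the analytic input is contained in Theorem~\ref{thm:rep_in_K}.
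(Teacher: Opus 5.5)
Your proposal is correct and follows the paper's route. The paper reads the corollary off Theorem~\ref{thm:rep_in_K}: the topological isomorphism $H^q_{\dd'}(\K(G)) \to H^q_{\dd'}(\cinfty(G))$ carries over the Hausdorffness supplied by Proposition~\ref{prop:algebraic_part_closed_range}, and your sequence argument is a valid unpacking of the same ingredients ($H^q_{\dd'}(\bar{\RR}(G))=0$, the continuous projection $\pi_{\K}$ commuting with $\dd'$, and closedness of $\dd'$ on $\K$-valued forms).
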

\begin{Cor}
  \label{cor:GH}
  If $\VV$ is globally hypoelliptic then every class in $H^{q}_{\dd'}(\cinfty(G))$ has a representative in $\C_G(G; \LLambda^{q})$.
\end{Cor}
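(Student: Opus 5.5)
The statement to prove is Corollary~\ref{cor:GH}. The plan is to combine Theorem~\ref{thm:rep_in_K} with the observation that global hypoellipticity forces $\K(G) = \C_G$.

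\textbf{Step 1: closed range.} Global hypoellipticity of $\VV$ is the same as global hypoellipticity of $\dd': \cinfty(G) \to \cinfty(G; \LLambda^1)$. By~\cite[Proposition~3.11]{afr22}, as recalled in the Introduction, this operator then has closed range. So the hypothesis of Theorem~\ref{thm:rep_in_K} holds. Consequently, every class $[f] \in H^q_{\dd'}(\cinfty(G))$ has the representative $\pi_{\K}(f) \in \K(G; \LLambda^q)$.

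\textbf{Step 2: $\K(G) = \C_G$.} The inclusion $\C_G \sset \K(G)$ is trivial. For the converse I would use the spectral decomposition from the proof of Proposition~\ref{prop:direct_sum_closed_range}. Since $\Delta_G$ commutes with $\Delta_{\gr{v}}$, each $f \in \K(G)$ has all its projections $\mathcal{F}_\lambda(f)$ in $\K_\lambda = \K(G) \cap E_\lambda$.

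Suppose $\K(G)$ contained a nonconstant element. Then infinitely many $\K_\lambda$ are nonzero. This is the main point requiring care, and I would argue it as follows.

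\begin{itemize}
\item $\K(G)$ is invariant under right translations, because left-invariant vector fields commute with right translations.
\item Hence each $\K_\lambda$ is a right-invariant subspace. It is also conjugation-invariant, since the $\vv{L}_j$ are real.
\item If only finitely many $\K_\lambda$ were nonzero, $\K(G)$ would be finite dimensional and right-invariant. Its real, nonconstant elements would generate a finite-dimensional algebra: products of elements of $\K(G)$ stay in $\K(G)$, because $\K(G)$ is the kernel of derivations.
\item But for a nonconstant real $h \in \K(G)$, the powers $h, h^2, h^3, \ldots$ are linearly independent, since $h$ takes infinitely many values. This is a contradiction.
\end{itemize}

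So we may pick unit vectors $e_k \in \K_{\lambda_k}$ with $\lambda_k \to \infty$. Then
\begin{equation*}
  f \dfn \sum_k (1 + \lambda_k)^{-1} e_k
\end{equation*}
converges in $L^2(G)$, hence defines a distribution. It satisfies $\vv{L} f = 0$ for every $\vv{L} \in \gr{v}$: this can be checked in distributions, since each $\vv{L}$ preserves each $E_\lambda$ (it commutes with $\Delta_G$) and kills each $e_k$. However, $f$ is not smooth, because its coefficients violate the Paley--Wiener bound~\eqref{eq:paley-wiener} with $s = 2$. This contradicts global hypoellipticity. Therefore $\K(G) = \C_G$.

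\textbf{Step 3: conclusion.} With $\K(G) = \C_G$, we get $\K(G; \LLambda^q) = \C_G(G; \LLambda^q)$. So the representative $\pi_{\K}(f)$ from Step 1 is left-invariant. Equivalently, Remark~\ref{rem:rep-formula-tensor-K} gives $H^q_{\dd'}(\cinfty(G)) \cong H^q_{\dd'}(\C_G)$.

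The step most likely to need adjustment is the infinite-dimensionality claim in Step 2. If the algebra argument is awkward, a fallback is to note that a nonconstant $h \in \K(G)$ has infinitely many distinct spectral components $\mathcal{F}_\lambda(h^N)$ as $N$ varies. Either route yields infinitely many nonzero $\K_\lambda$.
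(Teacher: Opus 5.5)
Your proof is correct and essentially the paper's: closed range plus Theorem~\ref{thm:rep_in_K} gives representatives in $\K(G; \LLambda^q)$, and the polynomial/connectedness argument applied to the algebra $\K(G)$ forces $\K(G) = \C_G$. The one difference is that you prove directly that global hypoellipticity rules out an infinite-dimensional $\K(G)$, by spectrally constructing a non-smooth distributional solution, whereas the paper just invokes a standard functional-analytic argument. Two small points on your version: to get $L^2$ convergence, choose the $\lambda_k$ sparse enough that $\sum_k (1+\lambda_k)^{-2} < \infty$; and left-invariant fields commute with left translations, not right ones, though that remark is never used.
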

\begin{proof}
  The hypothesis is equivalent to global hypoellipticity of the first $\dd'$ in the differential complex associated to $\VV$; that is, any $f \in \D'(G)$ satisfying $\dd' f \in \cinfty(G; \LLambda^{1})$ is already smooth to start with. This implies~\cite[Theorem~2.2]{afjr24} that the map $\dd': \cinfty(G) \to \cinfty(G; \LLambda^{1})$ has closed range; a standard functional analytic argument further shows that its kernel $\K(G)$ is of finite dimension. Since the latter is a subalgebra of $\cinfty(G)$ (by Leibniz rule), it must be equal to $\C_{G}$. The conclusion will follow from Theorem~\ref{thm:rep_in_K}.

  Indeed, let $f \in \K(G)$. Hence, $f^n \in \K(G)$ for every $n \in \N$, and since $\K(G)$ is finite dimensional there exists $n_0 \in \N$ such that $1, f, f^2, \ldots, f^{n_0}$ are linearly dependent. This entails the existence of a polynomial $P \in \C[z]$ such that $P(f) = 0$, meaning that $f(G)$ consists of roots of $P$, which are finitely many. By connectedness, $f(G)$ is a point, i.e.,~$f$ is constant.
\end{proof}

It is a natural question to ask whether some kind of converse to Corollary~\ref{cor:hausdorff_cohomology_all} holds. We provide an affirmative answer assuming that $\mathfrak{v}$ is commutative. Notice that this assumption does not preclude non-commutativity of $G$.

\begin{Thm}
  \label{thm:abelian-converse}
  Assume that $\mathfrak{v}$ is Abelian. If $\dd': \cinfty(G; \LLambda^{q - 1}) \to \cinfty(G; \LLambda^{q})$ has closed range for some $q \in \{2, \ldots, n\}$, then $\dd':\cinfty(G) \to \cinfty(G; \LLambda^1)$ has closed range. 
\end{Thm}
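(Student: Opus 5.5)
The plan is to argue by contraposition. Assume $\dd': \cinfty(G) \to \cinfty(G; \LLambda^1)$ does not have closed range; equivalently, by~\cite[Proposition~3.11]{afr22}, $\RR(G) \neq \bar{\RR}(G)$. For each $q \in \{2, \ldots, n\}$ I will construct a $\dd'$-closed $f \in \cinfty(G; \LLambda^q)$ that lies in the closure of $\dd' \cinfty(G; \LLambda^{q-1})$ but not in $\dd' \cinfty(G; \LLambda^{q-1})$.

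The first step is to simplify $\dd'$ using commutativity. For $\vv{X}, \vv{Y} \in \gr{v}$ we have $\dd \tau_j(\vv{X}, \vv{Y}) = -\tau_j([\vv{X}, \vv{Y}]) = 0$, so all the $\alpha_{JK}$ in~\eqref{eq:structure-ctes2} vanish. Hence
\begin{equation*}
  \dd' u = \sum_j \psum_{|J| = q} (\vv{L}_j u_J)\, \tau_j \wedge \tau_J,
\end{equation*}
which is the Koszul complex of the commuting family $\vv{L}_1, \ldots, \vv{L}_n$ acting on $\cinfty(G)$.

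The second step is spectral. Each $\vv{L}_j$ commutes with $\Delta_G$, and the $\vv{L}_j$ are commuting skew-adjoint operators on the finite dimensional spaces $E_\lambda$. So I can simultaneously diagonalize them: $E_\lambda$ splits orthogonally into joint eigenspaces $E_{\lambda, \xi}$ on which $\vv{L}_j = i \xi_j$, with $\xi \in \R^n$, and on $E_{\lambda, \xi}$ we have $\Delta_{\gr{v}} = |\xi|^2$. The orthogonal projections $P_{\lambda, \xi}$ onto these spaces commute with every $\vv{L}_j$, so their coefficientwise extensions commute with $\dd'$ by~\eqref{eq:comm_A_dprime}. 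Non-closedness of $\RR(G)$ should then give pairs $(\lambda_\nu, \xi^{(\nu)})$ with $\lambda_\nu \to \infty$, $\xi^{(\nu)} \neq 0$ and
\begin{equation*}
  |\xi^{(\nu)}| \leq (1 + \lambda_\nu)^{-\nu}.
\end{equation*}
Indeed, if instead $|\xi| \geq C (1 + \lambda)^{-N}$ held for every $\xi \neq 0$, then the series $\sum_\lambda \sum_{\xi \neq 0} |\xi|^{-2} P_{\lambda, \xi} f$ would converge in $\cinfty(G)$ for every $f \in \bar{\RR}(G)$, by the Paley--Wiener characterization~\eqref{eq:paley-wiener}. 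This would invert $\Delta_{\gr{v}}$ on $\bar{\RR}(G)$ and show $\RR(G)$ is closed, a contradiction. I expect this extraction of a quantitatively degenerate sequence to be the main technical point, because the $\lambda_\nu$ must be distinct and the bound must beat every polynomial weight; the fix is to pass to a subsequence.

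The third step is the construction. Pick unit vectors $\varphi_\nu \in E_{\lambda_\nu, \xi^{(\nu)}}$ and set $\omega_\nu \dfn \sum_j \xi^{(\nu)}_j \tau_j / |\xi^{(\nu)}|$. Since $q - 1 \leq n - 1$, I can choose a constant $(q-1)$-form $\eta_\nu$ in the span of $\tau_1, \ldots, \tau_n$ orthogonal to $\omega_\nu$ such that $\omega_\nu \wedge \eta_\nu$ has unit norm; for instance, take $\eta_\nu$ to be the wedge of $q-1$ orthonormal vectors orthogonal to $\omega_\nu$. Define
\begin{equation*}
  f \dfn \sum_\nu c_\nu\, \varphi_\nu\, \omega_\nu \wedge \eta_\nu, \qquad c_\nu \dfn (1 + \lambda_\nu)^{-\nu/2}.
\end{equation*}
The coefficients $c_\nu$ decay faster than any power of $1 + \lambda_\nu$, so $f$ is smooth. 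Each summand is exact, being $\dd'$ of $c_\nu \varphi_\nu \eta_\nu / (i |\xi^{(\nu)}|)$, so $f$ is closed and its partial sums show that $f$ lies in the closure of the range.

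Finally, suppose $f = \dd' v$ for some smooth $v$. Applying $P_{\lambda_\nu, \xi^{(\nu)}}$, which commutes with $\dd'$, gives
\begin{equation*}
  c_\nu\, \varphi_\nu\, \omega_\nu \wedge \eta_\nu = \dd' P_{\lambda_\nu, \xi^{(\nu)}} v.
\end{equation*}
On $E_{\lambda, \xi}$ the operator $\dd'$ acts as $i |\xi|\, \omega \wedge \cdot$ on the exterior-algebra factor, which has norm $|\xi|$. Therefore
\begin{equation*}
  \| P_{\lambda_\nu, \xi^{(\nu)}} v \|_{L^2} \geq c_\nu / |\xi^{(\nu)}| \geq (1 + \lambda_\nu)^{\nu/2} \to \infty.
\end{equation*}
This contradicts $v \in L^2$; indeed, by~\eqref{eq:paley-wiener} these projections should even decay rapidly. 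So $f$ is not in the range, the range of $\dd'$ in degree $q$ is not closed, and the contrapositive is established.
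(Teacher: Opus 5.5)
Your proof is correct. Its skeleton is the paper's: contrapositive; simultaneous diagonalization of the commuting skew-adjoint $\vv{L}_j$ on each $E_\lambda$; a series of exact forms concentrated on a sparse sequence of joint eigenfunctions with tiny nonzero joint eigenvalue; and a contradiction with the Fourier coefficients of a primitive. The two key steps, however, are carried out differently.

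The paper proceeds as follows:
\begin{itemize}
\item it imports the degenerate functions $\phi_\nu$ from \cite{araujo19} and reduces them to joint eigenfunctions;
\item it relabels and passes to a subsequence so that $\|\vv{L}_1 \phi_\nu\|_{L^2(G)}$ is the largest of the $\|\vv{L}_j \phi_\nu\|_{L^2(G)}$;
\item it builds $f$ on the fixed coordinate form $\tau_2 \wedge \cdots \wedge \tau_{q+1}$;
\item it evaluates $\dd' u = f$ on $(\vv{L}_1, \ldots, \vv{L}_{q+1})$ and projects onto $\varphi_1^{(\nu)}$, obtaining Fourier coefficients of $u$ bounded below.
\end{itemize}
You instead produce the degenerate pairs $(\lambda_\nu, \xi^{(\nu)})$ yourself, via the explicit inverse $\sum |\xi|^{-2} P_{\lambda,\xi}$ on $\bar{\RR}(G)$. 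You also rotate the coframe so that $\omega_\nu$ points along $\xi^{(\nu)}$. The identity $\dd' = i|\xi| \, \omega \wedge \cdot$ on $E_{\lambda,\xi}$-valued forms then replaces both the ``maximum at $j = 1$'' normalization and the evaluation argument. Your weights $c_\nu$ force any primitive to satisfy $\|P_{\lambda_\nu,\xi^{(\nu)}} v\|_{L^2(G)} \geq (1+\lambda_\nu)^{\nu/2}$.

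What your route buys:
\begin{itemize}
\item it is self-contained;
\item it keeps the degrees exactly as in the statement ($\LLambda^{q-1} \to \LLambda^q$ with $q \leq n$), whereas the paper's written proof works with $\LLambda^{q} \to \LLambda^{q+1}$, i.e.\ with the index shifted by one;
\item the superpolynomial growth of the primitive even shows that $f$ has no primitive in $\D'(G; \LLambda^{q-1})$, not merely none in $\cinfty(G; \LLambda^{q-1})$.
\end{itemize}
The paper's route, in exchange, never changes coframe and delegates the extraction of the degenerate sequence to \cite{araujo19}.

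Two small points you should write out.
\begin{itemize}
\item In the inversion step, $P_{\lambda,0} f = 0$ for $f \in \bar{\RR}(G)$, because $E_{\lambda,0} \sset \K(G) \perp \bar{\RR}(G)$.
\item $\lambda_\nu \to \infty$ is automatic, so no subtle extraction is needed. The nonzero joint eigenvalues occurring in $\bigoplus_{\lambda \leq R} E_\lambda$ are finitely many, hence bounded away from $0$. Meanwhile $(1+\lambda_\nu)^{-\nu} \to 0$, because $\xi^{(\nu)} \neq 0$ forces $\lambda_\nu$ to be at least the first positive eigenvalue of $\Delta_G$. This is essentially the paper's $m_{\lambda_1}$ argument.
\end{itemize}
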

\begin{proof}
  The key algebraic aspect of Abelian structures is the simplification of the expression~\eqref{eq:isom_dprime3} of $\dd'$ thanks to the vanishing of the structural constants $\alpha_{JK}$. Indeed, by~\eqref{eq:structure-ctes2} we have, for an ordered multi-index $K = (k_0, \ldots, k_q)$:
  \begin{equation*}
    \alpha_{JK} = (\dd' \tau_J)(\LL_{k_0}, \ldots, \LL_{k_q}) = 0
  \end{equation*}
  thanks to~\eqref{eq:invariant_formula_dprime}, the duality relations between $\tau_1, \ldots, \tau_n$ and $\LL_1, \ldots, \LL_n$, and the fact that the latter is a family of pairwise commuting vector fields (our hypothesis). Therefore, if we write $u \in \cinfty(G; \LLambda^q)$ in its canonical form~\eqref{eq:urepglobal}, then
  \begin{equation}
    \label{eq:isom_dprime_comm}
    \dd' u
    =
    \sum_{j = 1}^n \psum_{|J| = q} (\vv{L}_j u_{J}) \ \tau_j \wedge \tau_J.
  \end{equation}

  The proof of the theorem starts by assuming, otherwise, that the range of $\Delta_{\gr{v}}: \cinfty(G) \to \cinfty(G)$ is not closed. By results in~\cite{araujo19}, given $C, \rho > 0$ there exist $\lambda \in \sigma(\Delta_G)$ and $\phi \in E_\lambda \cap \left( \ker \Delta_{\gr{v}} \right)^\perp$ such that $\| \phi \|_{L^2(G)} = 1$ and
  \begin{equation}
    \label{falhaemresolubilidadeglobal}
    \Big( \sum_{j=1}^n \| \LL_j \phi \|_{L^2(G)}^2 \Big)^{\frac{1}{2}} < C (1 + \lambda)^{-\rho}.  
  \end{equation}
  (see the proof of Proposition~\ref{prop:direct_sum_closed_range} for notation). It follows that for each $\nu \in \N$ we can find $0 < C_\nu \leq 1$, $\lambda_\nu \in \sigma(\Delta_G)$ and $\phi_\nu \in E_{\lambda_\nu} \cap \left(\ker \Delta_{\gr{v}} \right)^\perp$ with $\| \phi_\nu \|_{L^2(G)} = 1$ and
  \begin{equation}
    \label{eq:solvability-ineq}
    \Big( \sum_{j=1}^n \|\LL_j \phi_\nu \|_{L^2(G)}^2 \Big)^{\frac{1}{2}} < C_\nu (1 + \lambda_\nu) ^{-\nu},
    \quad \forall \nu \in\N.
  \end{equation}
  Concerning this construction, below we deduce a few simplifications that we will be able to assume without loss of generality.
  
  First, we may further assume $\{ \lambda_\nu \}_{\nu \in \N}$ strictly increasing. Indeed, putting $C = 1$ and $\rho = 1$ in~\eqref{falhaemresolubilidadeglobal}, we find $\lambda_1 \in \sigma(\Delta_G)$ and $\phi_1 \in E_{\lambda_1} \cap (\ker \Delta_{\mathfrak{v}})^\perp$ such that
  \begin{equation*}
    \Big(\sum_{j=1}^n \| \LL_j\phi_1 \|_{L^2(G)}^2 \Big)^{\frac{1}{2}} < (1 + \lambda_1)^{-1},
  \end{equation*}
  and pick $C_1 \dfn 1$. Since $\sigma(\Delta_G)$ is discrete and each $E_\lambda$ is finite dimensional, the set
  \begin{equation*}
    \mathcal{S}_{\lambda_1} \dfn
    \bigcup_{\substack{\lambda\in \sigma(\Delta_G) \\ \lambda\leq \lambda_1}}
    \left\{ \phi \in E_{\lambda} \cap (\ker \Delta_{\mathfrak{v}} )^\perp \st  \| \phi \|_{L^2(G)} = 1 \right\}
  \end{equation*}
  is compact, so by~\eqref{eq:delta-vs-fields-norm} we have
  \begin{equation*}
    m_{\lambda_1} \dfn \min_{\phi \in \mathcal{S}_{\lambda_1}} \Big(\sum_{j=1}^n \| \LL_j \phi \|_{L^2(G)}^2 \Big)^{\frac{1}{2}} > 0.
  \end{equation*}
  Picking $0 < C_2< \min\{ m_{\lambda_1}, 1 \}$ and putting $C = C_2$ and $\rho = 2$ in~\eqref{falhaemresolubilidadeglobal}, we obtain $\lambda_2 \in \sigma(\Delta_G)$, and $\phi_2 \in E_{\lambda_2} \cap (\ker \Delta_{\mathfrak{v}})^\perp$ with $\|\phi_2 \|_{L^{2}(G)} = 1$ such that
  \begin{equation*}
    \Big( \sum_{j=1}^n \| \LL_j \phi_2 \|_{L^2(G)}^2 \Big)^{\frac{1}{2}} < C_2 (1 + \lambda_2 )^{-2}.
  \end{equation*}
  Note that $\lambda_2 > \lambda_1$: otherwise, $\phi_2 \in \mathcal{S}_{\lambda_1}$, hence
  \begin{equation*}
    C_2 (1 + \lambda_2 )^{-2} < m_{\lambda_1} \leq \Big(\sum_{j=1}^n \| \LL_j \phi_2 \|_{L^2(G)}^2 \Big)^{\frac{1}{2}}
  \end{equation*}
  leading to a contradiction. Proceeding in this way we obtain the desired sequences.

  Fix $\nu \in \N$. Since the vector fields $\LL_1,\ldots, \LL_n$ restrict to $E_{\lambda_\nu}$ as commuting skew-adjoint linear endomorphisms, we can pick an orthonormal basis
  \begin{equation*}
    \mathcal B_\nu \dfn \left\{ \varphi_{1}^{(\nu)}, \ldots, \varphi_{d_\nu}^{(\nu)} \right\}
  \end{equation*}
  of $E_{\lambda_\nu}$ formed by common eigenfunctions of these operators, where $d_{\nu} \dfn \dim E_{\lambda_\nu}$. After relabeling, we can assume that
  \begin{equation*}
    \mathcal{B}_\nu'
    \dfn \{\varphi \in \mathcal B_\nu \st \Delta_{\gr{v}} \varphi \neq 0 \}
    = \left\{ \varphi_{1}^{(\nu)}, \ldots, \varphi_{q_\nu}^{(\nu)} \right\}
  \end{equation*}
  for some $q_\nu \in \{ 1, \ldots, d_\nu\}$ -- $\mathcal{B}_\nu' \neq \eset$ since $\phi_\nu \in E_{\lambda_\nu}$. Noting that $\varphi_{1}^{(\nu)}, \ldots, \varphi_{d_\nu}^{(\nu)}$ are also eigenfunctions of $\Delta_{\mathfrak{v}}$ -- thanks to its very definition~\eqref{eq:DeltaV} --, elementary arguments show that $\mathcal B_\nu'$ is an orthonormal basis of $E_{\lambda_\nu} \cap (\ker \Delta_{\mathfrak{v}})^\perp$.
  
  
  Thus $\phi_\nu \in \Span_\C \mathcal{B}_\nu'$, but we may also assume that actually $\phi_\nu \in \mathcal{B}_\nu'$. Indeed, otherwise we would have
  \begin{equation*}
    \Big( \sum_{j=1}^n \|\LL_j \varphi_\ell^{(\nu)} \|_{L^2(G)}^2\Big)^{\frac{1}{2}} \geq C_\nu (1 + \lambda_\nu )^{-\nu},
    \quad \forall \ell \in \{1, \ldots, q_\nu\}.
  \end{equation*}
  Writing $\LL_j \varphi_\ell^{(\nu)} = \mu_{j, \ell}^{(\nu)} \varphi_\ell^{(\nu)}$, we have, for any $\phi \in E_{\lambda_\nu} \cap (\ker \Delta_{\mathfrak{v}})^\perp$:
  \begin{align*}
    \sum_{j=1}^n \|\LL_j \phi \|_{L^2(G)}^2
    &= \sum_{j=1}^n \Big\| \sum_{\ell=1}^{q_\nu} \langle \phi, \varphi_\ell^{(\nu)} \rangle_{L^2(G)} \ \LL_j \varphi_\ell^{(\nu)} \Big\|_{L^2(G)}^2 \\
    &= \sum_{j=1}^n \Big\| \sum_{\ell=1}^{q_\nu} \langle \phi, \varphi_\ell^{(\nu)} \rangle_{L^2(G)} \ \mu_{j, \ell}^{(\nu)} \varphi_\ell^{(\nu)} \Big\|_{L^2(G)}^2 \\
    &= \sum_{j=1}^n \sum_{\ell=1}^{q_\nu} |\langle \phi, \varphi_\ell^{(\nu)} \rangle_{L^2(G)} |^2 \ | \mu_{j, \ell}^{(\nu)}|^2 \\
    &= \sum_{j=1}^n \sum_{\ell=1}^{q_\nu} |\langle \phi, \varphi_\ell^{(\nu)} \rangle_{L^2(G)} |^2 \ \| \LL_j \varphi_\ell^{(\nu)} \|_{L^2(G)}^2 \\
    &\geq \sum_{\ell=1}^{q_\nu} |\langle \phi, \varphi_\ell^{(\nu)} \rangle_{L^2(G)} |^2 \ C_\nu^2 (1 + \lambda_\nu )^{-2\nu} \\
    &= \| \phi \|_{L^2(G)}^2 C_\nu^2 (1 + \lambda_\nu )^{-2\nu},
  \end{align*}
  which, for $\phi = \phi_\nu$, contradicts~\eqref{eq:solvability-ineq}. Hence, we may assume that $\phi_\nu \in \mathcal{B}_\nu'$, and, after further relabeling, that $\phi_\nu = \varphi_1^{(\nu)}$.
  
  Finally, we can assume without loss of generality that
  \begin{equation}
    \label{eq:solvability-ineq-max}
    \max_{1 \leq j \leq n} \|\LL_j \phi_\nu \|_{L^2(G)} = \|\LL_1 \phi_\nu \|_{L^2(G)} ,
    \quad \forall \nu \in \N.
  \end{equation}
  Indeed, the maximum in the left-hand side of~\eqref{eq:solvability-ineq-max} must be attained infinitely many times at some $j_0 \in \{1, \ldots, n\}$. Relabeling the vector fields so that as to have $j_0 = 1$ and extracting the underlying subsequence in $\nu$ does the job. In particular, $\LL_1 \phi_\nu \neq 0$ for all $\nu \in \N$.
  
  Now we set
  \begin{equation*}
    f \dfn \sum_{\nu=1}^\infty \sum_{j=1}^n (\LL_j \phi_\nu) \ \tau_j \wedge \tau_2 \wedge \cdots \wedge \tau_{q+1}
    \in \cinfty(G; \LLambda^{q + 1}).
  \end{equation*}
  Indeed, $\mathcal{F}_{\lambda}(f) = 0$ if $\lambda \neq \lambda_\nu$ for all $\nu \in \N$, whereas
  \begin{equation*}
    \| \mathcal{F}_{\lambda_\nu}(f) \|_{L^2(G)}^2
    = \| \LL_1 \phi_\nu \|_{L^2(G)}^2 + \sum_{j = q + 2}^n \| \LL_j \phi_\nu \|_{L^2(G)}^2
    < (1 + \lambda_\nu)^{-2\nu},
    \quad \forall \nu \in \N,
  \end{equation*}
  by~\eqref{eq:solvability-ineq}, from which smoothness of $f$ follows by~\eqref{eq:paley-wiener}, which also entails convergence of the series above in $\cinfty(G; \LLambda^{q + 1})$. Moreover, $f$ belongs to the closure of the range of $\dd': \cinfty(G; \LLambda^q) \to \cinfty(G; \LLambda^{q + 1})$: notice that, thanks to~\eqref{eq:isom_dprime_comm},
  \begin{equation*}
    f = \lim_{N \to\infty} \dd' \left( \sum_{\nu=1}^N \phi_\nu \ \tau_2 \wedge \cdots \wedge \tau_{q+1} \right).
  \end{equation*}
  But by hypothesis the said range is closed, so we obtain $u \in \cinfty (G; \LLambda^q)$ such that $\dd' u = f$. We have, on the one hand,
  \begin{equation*}
    f(\LL_1, \LL_2, \ldots, \LL_{q + 1}) = \sum_{\nu=1}^\infty \LL_1 \phi_\nu
  \end{equation*}
  by definition of $f$; on the other hand, by~\eqref{eq:invariant_formula_dprime} (recalling, once more, that $\gr{v}$ is commutative) we have
  \begin{align*}
    (\dd' u) (\LL_1, \LL_2, \ldots, \LL_{q + 1})
    &= \sum_{j = 1}^{q + 1} (-1)^{j + 1} \LL_j \left( u (\LL_1, \ldots, \widehat{\LL}_j, \ldots, \LL_{q + 1}) \right) \\
    &= \sum_{j = 1}^{q + 1} (-1)^{j + 1} \LL_j u_{\widehat{J}(j)}
  \end{align*}
  with coefficients $u_J$ as in~\eqref{eq:urepglobal}, where $\widehat{J}(j) \dfn (1, \ldots, \widehat{j}, \ldots, q + 1)$. We conclude that
  \begin{equation*}
    \sum_{j = 1}^{q + 1} (-1)^{j + 1} \LL_j u_{\widehat{J}(j)} = \sum_{\nu=1}^\infty \LL_1 \phi_\nu
  \end{equation*}
  which we project onto $E_{\lambda_\nu}$, obtaining
  \begin{align*}
    \LL_1 \varphi_1^{(\nu)}
    &= \LL_1 \phi_\nu \\
    &= \sum_{j = 1}^{q + 1} (-1)^{j + 1} \mathcal{F}_{\lambda_\nu} (\LL_j u_{\widehat{J}(j)}) \\
    &= \sum_{j = 1}^{q + 1} (-1)^{j + 1} \LL_j \mathcal{F}_{\lambda_\nu} (u_{\widehat{J}(j)}) \\
    &= \sum_{j = 1}^{q + 1} \sum_{\ell = 1}^{q_\nu} (-1)^{j + 1} \langle u_{\widehat{J}(j)}, \varphi_\ell^{(\nu)} \rangle_{L^2(G)} \ \LL_j \varphi_\ell^{(\nu)}.
  \end{align*}
  Recalling that the $\varphi_\ell^{(\nu)}$ are eigenfunctions of each $\LL_j$, by the linear independence of $\varphi_1^{(\nu)}, \ldots, \varphi_{q_\nu}^{(\nu)}$ it follows that
  \begin{equation*}
    \LL_1 \varphi_1^{(\nu)} = \LL_1 \phi_\nu = \sum_{j = 1}^{q + 1} (-1)^{j + 1} \langle u_{\widehat{J}(j)}, \varphi_1^{(\nu)} \rangle_{L^2(G)} \ \LL_j \varphi_1^{(\nu)}
  \end{equation*}
  hence computing $L^2$ norms yields, using~\eqref{eq:solvability-ineq-max},
  \begin{align*}
    1
    &= \sum_{j = 1}^{q + 1} |\langle u_{\widehat{J}(j)}, \varphi_1^{(\nu)} \rangle_{L^2(G)}|^2 \ \frac{\| \LL_j \varphi_1^{(\nu)} \|_{L^2(G)}^2}{\| \LL_1 \varphi_1^{(\nu)} \|_{L^2(G)}^2} \\
    &\leq \sum_{j = 1}^{q + 1} |\langle u_{\widehat{J}(j)}, \varphi_1^{(\nu)} \rangle_{L^2(G)}|^2 \\
    &\leq \sum_{j = 1}^{q + 1} \| \mathcal{F}_{\lambda_\nu} (u_{\widehat{J}(j)}) \|_{L^2(G)}^2.
  \end{align*}
  which is impossible since $u_{\widehat{J}(j)} \in \cinfty(G)$ implies that $\| \mathcal{F}_{\lambda_\nu} (u_{\widehat{J}(j)}) \|_{L^2(G)}$ must decay rapidly in $\lambda_\nu$ (in the sense of~\eqref{eq:paley-wiener}).
\end{proof}

\section{Structures whose integral subgroup is closed}
\label{sec:integral}

Since $\gr{v}$ is essentially real, there exists a Lie subalgebra $\gr{h} \sset \gr{g}$ whose complexification equals $\gr{v}$: we denote by $H \sset G$ its (connected) integral subgroup, which, in this section, we will further assume to be closed. In that case, the space $G/H$ of left cosets is a smooth, compact manifold, and the natural projection $\Pi: G \to G/H$ is a smooth submersion. We have
\begin{equation}
  \label{eq:scriptK_when_H_closed}
  \K(G) = \{ \Pi^* g \st  g \in \cinfty(G/H) \} \cong \cinfty(G/H).
\end{equation}

\begin{Thm}
  \label{thm:subgroup_closed}
  If $H \sset G$ is closed then   $\dd': \cinfty(G) \to \cinfty(G; \LLambda^1)$ has closed range.
\end{Thm}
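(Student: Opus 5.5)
By~\cite[Proposition~3.11]{afr22} it suffices to show that $\Delta_{\gr{v}}: \cinfty(G) \to \cinfty(G)$ has closed range, i.e.~that $\RR(G) = \bar{\RR}(G)$. By Proposition~\ref{prop:direct_sum_closed_range}, this is equivalent to the following: every $f \in \cinfty(G)$ with $\pi_{\K}(f) = 0$ can be written as $f = \Delta_{\gr{v}} u$ with $u \in \cinfty(G)$. I will reduce this to an a priori estimate on the eigenspaces $E_\lambda$. More precisely, I aim to show that there exist $C, \rho > 0$ such that
\begin{equation*}
  \| \Delta_{\gr{v}} \phi \|_{L^2(G)} \geq C (1 + \lambda)^{-\rho} \| \phi \|_{L^2(G)},
  \quad \forall \phi \in \RR_\lambda, \ \forall \lambda \in \sigma(\Delta_G).
\end{equation*}
Granting this, for $f \in \bar{\RR}(G)$ I solve $\Delta_{\gr{v}} u_\lambda = \mathcal{F}_\lambda(f)$ with $u_\lambda \in \RR_\lambda$ and $\|u_\lambda\| \leq C^{-1}(1+\lambda)^\rho \|\mathcal{F}_\lambda(f)\|$. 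The series $\sum u_\lambda$ then converges in $\cinfty(G)$ by~\eqref{eq:paley-wiener}, and its sum solves the equation. This is precisely the mechanism behind the results of~\cite{araujo19}, which were already used in the proof of Theorem~\ref{thm:abelian-converse}.

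The estimate should come from the geometry of the closed subgroup $H$. The restriction of the fields $\vv{L}_1, \dots, \vv{L}_n$ to each coset $xH$ is an orthonormal frame of the compact Lie group $H$ (for the induced $\ad$-invariant metric), and $\Delta_{\gr{v}}$ acts along the fibres of $\Pi: G \to G/H$ as the Laplace--Beltrami operator $\Delta_H$ on each fibre. I will therefore use the right regular action of $H$ on $L^2(G)$: since $\gr{h} \sset \gr{g}$ acts by left-invariant fields, that is, by derivatives of right translations by elements of $H$, $\Delta_{\gr{v}}$ is the Casimir-type operator $d\rho(\Omega_{\gr h})$ of the unitary representation $\rho$ of $H$ on $L^2(G)$ by right translations. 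Decomposing $L^2(G)$ (and each finite-dimensional $E_\lambda$, which is right-$H$-invariant because $\Delta_G$ commutes with $\gr{g}$) into $H$-isotypic components, $\Delta_{\gr{v}}$ acts on the $\sigma$-isotypic component, $\sigma \in \widehat{H}$, as the scalar $\mu_\sigma \geq 0$, namely the Casimir eigenvalue of $\sigma$ (more precisely, $-\sum \vv{L}_j^2$ in $\sigma$, which is scalar since the $\ad$-invariant inner product on $\gr h$ makes $\sum \vv{L}_j^2$ central in $U(\gr h)$).

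The key point is then a uniform spectral gap: the nonzero values $\mu_\sigma$ are eigenvalues of the Laplacian of the compact manifold $H$, so they form a discrete subset of $(0,\infty)$ and are bounded below by some $\mu_* > 0$. Hence $\|\Delta_{\gr v}\phi\| \geq \mu_* \|\phi\|$ for all $\phi \in \RR_\lambda$, which gives the estimate with $\rho = 0$.

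The main obstacle is justifying that $\Delta_{\gr v}$ is exactly the fibrewise Laplacian of $H$ with its bi-invariant metric, so that its nonzero spectrum (as an operator on $L^2(G)$) lies in the spectrum of $\Delta_H$. For this I will use that the restriction of the $\ad$-invariant metric on $\gr g$ to $\gr h$ is $\ad_{\gr h}$-invariant, so Lemma~\ref{lem:lap_commutes} applied on $H$ identifies $-\sum \vv{L}_j^2$ with $\Delta_H$ there. Then I will use the Peter--Weyl theorem for $H$, via the isometry $L^2(G) \cong L^2(G/H; L^2(H))$ given by local trivializations, or, more cleanly, via the Peter--Weyl decomposition of the right $H$-action. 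Closedness of $H$ is essential exactly here: without it, $H$ is not compact and the spectral gap may fail.
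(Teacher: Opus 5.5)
Your proof is correct, but it reaches the key estimate by a different route from the paper. Both arguments rest on the same uniform gap, $\|\Delta_{\gr v} f\|_{L^2(G)} \geq c\, \|f\|_{L^2(G)}$ for $f \perp \K(G)$, which is your estimate with $\rho = 0$.

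\textbf{The paper's route.} It gets the gap analytically. It slices $G$ into the cosets $xH$ via the integration formula over $G/H$. It uses $\K(G) = \Pi^*\cinfty(G/H)$ to see that $f \perp \K(G)$ forces each slice $(\ell_x^* f)|_H$ to have zero mean. It then applies the Poincar\'e-type inequality for the elliptic operator $\Delta_H$ on each slice, using $(\Delta_{\gr v} f)|_H = \Delta_H(f|_H)$ and the left-invariance of $\Delta_{\gr v}$, and integrates. The passage from this estimate to closed range is delegated to~\cite{araujo19}.

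\textbf{Your route.} You diagonalize $\Delta_{\gr v}$ through the right regular action of $H$. Centrality of $\sum_j \vv{L}_j^2$ in $U(\gr h)$, together with Schur's lemma and connectedness of $H$, makes $\Delta_{\gr v}$ a scalar $\mu_\sigma$ on each $H$-isotypic component of $E_\lambda$. Peter--Weyl then places every $\mu_\sigma$ in the discrete spectrum of $\Delta_H$ on $L^2(H)$. You also carry out the Fourier-series step yourself instead of citing it.

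\textbf{What each buys.} The paper's route is more elementary: no Peter--Weyl, Schur or enveloping algebra, just a fibrewise Poincar\'e inequality integrated over $G/H$. Yours needs neither the integration formula nor the description~\eqref{eq:scriptK_when_H_closed} of $\K(G)$, and in fact recovers the latter. Indeed, $\mu_\sigma = 0$ only for trivial $\sigma$, so $\K_\lambda$ is exactly the right-$H$-invariant part of $E_\lambda$. It also describes the spectrum of $\Delta_{\gr v}$ explicitly. Both arguments give the gap with $c$ equal to the first nonzero eigenvalue of $\Delta_H$.

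\textbf{A small correction.} What you call the main obstacle is lighter than you fear. Discreteness of the spectrum only needs $-\sum_j \vv{L}_j^2$ to be elliptic on $H$, which holds because the $\vv{L}_j$ form a global frame of $TH$. Its identification with the Laplace--Beltrami operator is not actually needed. In any case, that identification comes from the skew-symmetry of left-invariant fields, as in the proof of Proposition~\ref{prop:direct_sum_closed_range}, not from Lemma~\ref{lem:lap_commutes}.
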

\begin{proof}
  Pick $\LL_1, \ldots, \LL_n$ an orthonormal basis of $\gr{h}$: as in Section~\ref{sec:decompositions} we will prove, equivalently, that $\Delta_{\gr{v}}: \cinfty(G) \to \cinfty(G)$ has closed range.
  
  We endow $H$ with the $\ad$-invariant metric inherited from $G$ (which is the same as endowing $\gr{h} \sset \gr{g}$ with the induced inner product), and regard $\LL_1, \ldots, \LL_n$ as left-invariant vector fields on $H$ as well, in which sense
  \begin{equation*}
    \Delta_H = - \sum_{j = 1}^n \LL_j^2
  \end{equation*}
  is the Laplace-Beltrami operator on $H$ associated to the metric. It follows, since the vector fields $\LL_1, \ldots, \LL_n$ are tangent to $H$, that $(\Delta_{\gr{v}} f)|_H = \Delta_H (f|_H)$ for every $f \in \cinfty(G)$. The following estimates for $\Delta_H$ (which is elliptic) are well-known:
  \begin{equation*}
    \| h \|_{L^2(H)} \leq C  \| \Delta_H h \|_{L^2(H)},
    \quad \forall h \in \cinfty(H), \ h \perp_{L^2(H)} \C_H.
  \end{equation*}
  We will also make use of the following Fubini formula~\cite[Theorem~8.36]{knapp_lgbi}:
  \begin{equation*}
    \int_G f(x) \ \dd \mu_G(x)
    =
    \int_{G/H} \left( \int_H f(xy) \ \dd \mu_H(y) \right) \dd \mu_{G/H} (xH),
    \quad \forall f \in \cinfty(G),
  \end{equation*}
  where $\mu_G, \mu_H$ are the respective Haar measures, and $\mu_{G/H}$ is a certain $G$-invariant measure on $G/H$. Notice that $x \mapsto \int_H f(xy) \dd \mu_H(y)$ descends to $G/H$ thanks to the left-invariance of $\mu_H$.

  Let $f \in \cinfty(G)$ be $L^2$-orthogonal to $\K(G)$. By~\eqref{eq:scriptK_when_H_closed} we have
  \begin{equation*}
    0
    = \langle f, \Pi^* g \rangle_{L^2(G)}
    =  \int_{G/H} \overline{g(xH)} \left( \int_H f(xy) \ \dd \mu_H(y) \right) \dd \mu_{G/H} (xH),
    \quad \forall g \in \cinfty(G/H),
  \end{equation*}
  hence, for every $x \in G$,
  \begin{equation*}
    \langle (\ell_x^* f)|_H, 1_H \rangle_{L^2(H)}
    =
    \int_H f(xy) \ \dd \mu_H(y)
    = 0.
  \end{equation*}
  This shows that $(\ell_{x}^{\ast} f)|_H \perp_{L^2(H)} \C_H$, and thus 
  \begin{equation*}
    \| (\ell_x^* f)|_H \|_{L^2(H)}
    \leq
    C  \| \Delta_H [(\ell_x^* f)|_H] \|_{L^2(H)},
    \quad \forall x \in G.
  \end{equation*}
  Now notice that $\Delta_H [(\ell_x^* f)|_H] = [\Delta_{\gr{v}} (\ell_x^* f)]|_H = [ \ell_x^* (\Delta_{\gr{v}} f)]|_H$, and therefore
  \begin{align*}
    \| f \|_{L^2(G)}^2
    &= \int_G |f(x)|^2 \dd \mu_G(x) \\
    &= \int_{G/H} \left( \int_H |f(xy)|^2 \dd \mu_H(y) \right) \dd \mu_{G/H} (xH) \\
    &= \int_{G/H}  \| (\ell_x^* f)|_H \|_{L^2(H)}^2 \ \dd \mu_{G/H} (xH) \\
    &\leq C^2 \int_{G/H} \| [ \ell_x^* (\Delta_{\gr{v}} f)]|_H \|_{L^2(H)}^2  \ \dd \mu_{G/H} (xH) \\
    &= C^2 \| \Delta_{\gr{v}} f \|_{L^2(G)}^2.
  \end{align*}
  
  We have proved that
  \begin{equation*}
    \| f \|_{L^2(G)} \leq C  \| \Delta_{\gr{v}} f \|_{L^2(G)},
    \quad \forall f \in \cinfty(G), \ f \perp_{L^2(G)} \K(G).
  \end{equation*}
  Since $\Delta_{\gr{v}}$ is a left-invariant operator, our conclusion follows from results in~\cite{araujo19}.
\end{proof}
\begin{Cor}
  \label{cor:subgroup_closed_iso}
  If $H \sset G$ is closed then
  \begin{equation*}
    H^{q}_{\dd'}(\cinfty(G)) \cong \cinfty(G/H) \otimes H^q_{\mathrm{dR}} (H),
    \quad \forall q \in \{0, \ldots, n\}.
  \end{equation*}
\end{Cor}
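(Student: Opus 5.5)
By Theorem~\ref{thm:subgroup_closed}, the first $\dd'$ has closed range. Theorem~\ref{thm:rep_in_K} and Remark~\ref{rem:rep-formula-tensor-K} then give the topological isomorphisms
\begin{equation*}
  H^q_{\dd'}(\cinfty(G)) \cong H^q_{\dd'}(\K(G)) \cong \K(G) \otimes H^q_{\dd'}(\C_G),
\end{equation*}
and by~\eqref{eq:scriptK_when_H_closed} we have $\K(G) \cong \cinfty(G/H)$ via $\Pi^*$. It therefore remains to identify the finite dimensional space $H^q_{\dd'}(\C_G)$ with $H^q_{\mathrm{dR}}(H)$.

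For this I use the Lie algebra picture. The space $\C_G(G; \LLambda^q)$ consists of left-invariant forms with constant coefficients in $\tau_J$, so it is naturally $\wedge^q \gr{v}^* = \wedge^q (\C\gr{h})^*$: a form is determined by its values on $\LL_{j_1}, \ldots, \LL_{j_q}$. On such forms, formula~\eqref{eq:invariant_formula_dprime} has vanishing first sum, because the $\LL_j$ applied to constants give zero. What remains is exactly the Chevalley--Eilenberg differential of $\gr{h}$ with trivial coefficients. Hence $H^q_{\dd'}(\C_G) \cong H^q(\gr{h}; \C)$.

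On the other hand, $H$ is a closed subgroup of the compact group $G$, so it is a compact connected Lie group with Lie algebra $\gr{h}$. Left-invariant forms on $H$ likewise form the Chevalley--Eilenberg complex of $\gr{h}$, with the exterior derivative acting as the Lie algebra differential. The classical theorem of Chevalley--Eilenberg~\cite{ce48} (or the Appendix, Proposition~\ref{prop:ce}, applied with $G = H$ and $\VV = \C TH$) says that the inclusion of left-invariant forms into all smooth forms induces an isomorphism $H^q(\gr{h};\C) \cong H^q_{\mathrm{dR}}(H)$, with complex coefficients. Alternatively, this follows from Theorem~\ref{thm:rep_in_K} itself applied to $H$ with the elliptic structure $\VV = \C TH$. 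In that case $\K(H) = \C_H$, the first $\dd'$ has closed range by ellipticity, and the $\dd'$ complex is the de Rham complex.

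Combining these gives $H^q_{\dd'}(\cinfty(G)) \cong \cinfty(G/H) \otimes H^q_{\mathrm{dR}}(H)$. The isomorphism is topological because $H^q_{\mathrm{dR}}(H)$ is finite dimensional and tensoring with a finite dimensional space is just a finite direct sum of copies of $\cinfty(G/H)$.

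I expect the only point needing care to be the identification of $\dd'$ on $\C_G(G;\LLambda^q)$ with the differential of left-invariant forms on $H$. This needs pullback along the inclusion $\iota: H \hookrightarrow G$. The map $\iota^*$ sends each $\tau_j$ to the dual left-invariant coframe on $H$ and kills $\T'^{1,q-1}$, because $\zeta_k$ vanishes on $TH$. It therefore intertwines $\dd'$ with $\dd_H$, and it is bijective on left-invariant forms since both sides are $\wedge^q(\C\gr{h})^*$.
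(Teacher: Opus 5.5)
Your proposal is correct and follows essentially the paper's proof: you reduce via Theorems~\ref{thm:subgroup_closed} and~\ref{thm:rep_in_K}, Remark~\ref{rem:rep-formula-tensor-K} and \eqref{eq:scriptK_when_H_closed} to identifying $H^q_{\dd'}(\C_G)$, use the pullback $\imath^*$ along $H \hookrightarrow G$ (which kills the $\zeta_k$ and is bijective on left-invariant forms) to land in left-invariant de Rham cohomology of $H$, and conclude with \cite{ce48} or with Theorem~\ref{thm:rep_in_K} applied to $\C TH$ on $H$; the detour through $H^q(\gr{h};\C)$ is just a relabeling of the same complex. One small imprecision: Proposition~\ref{prop:ce} alone does not give $H^q(\gr{h};\C)\cong H^q_{\mathrm{dR}}(H)$, since it also needs the closed-range input (here supplied by ellipticity of $\Delta_H$) that is packaged in Theorem~\ref{thm:rep_in_K}, which is the alternative you give right after.
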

\begin{proof}
  By Theorems~\ref{thm:subgroup_closed} and~\ref{thm:rep_in_K}, Remark~\ref{rem:rep-formula-tensor-K} and equation~\eqref{eq:scriptK_when_H_closed}, we have
  \begin{equation*}
    H^{q}_{\dd'}(\cinfty(G)) \cong H^{q}_{\dd'}(\K(G)) \cong \K(G) \otimes H^{q}_{\dd'}(\C_G) \cong \cinfty(G/H) \otimes H^{q}_{\dd'}(\C_G),
  \end{equation*}
  so the only thing left to prove is that $H^{q}_{\dd'}(\C_{G}) \cong H^q_{\mathrm{dR}} (H)$.

  Pick $\LL_1, \ldots, \LL_n, \MM_1, \ldots, \MM_m$ a basis of $\gr{g}$ such that $\gr{h} = \Span \{\LL_1, \ldots, \LL_n\}$, and $\tau_1, \ldots, \tau_n, \zeta_1, \ldots, \zeta_m \in \gr{g}^*$ the dual basis; we will regard them as global frames for $TG$ and $T^*G$, respectively. Let $\imath: H \hookrightarrow G$ be the inclusion map: denoting by $\LL_1^\bb, \ldots, \LL_n^\bb$ the vector fields induced on $H$, they form a global frame of $TH$, and $\LL_j^\bb$ is $\imath$-related to $\LL_j$ for each $j \in \{1, \ldots, n\}$. It follows that $\imath^* \zeta_k = 0$ for each $k \in \{1, \ldots, m\}$, since for each $p \in H$ we have
  \begin{equation*}
    \langle \imath^* \zeta_k|_p, \LL_j^\bb|_p \rangle
    =
    \langle \zeta_k|_{\imath(p)}, \imath_*(\LL_j^\bb|_p) \rangle
    =
    \langle \zeta_k|_{\imath(p)}, \LL_j|_{\imath(p)} \rangle
    =
    0,
    \quad \forall j \in \{1, \ldots, n\}.
  \end{equation*}
  A similar argument shows that $\imath^* \tau_1, \ldots, \imath^* \tau_n$ is a global frame for $T^*H$, dual to $\LL_1^\bb, \ldots, \LL_n^\bb$. Therefore, writing an arbitrary $u \in \cinfty(G; \LLambda^q)$ as~\eqref{eq:urepglobal}, we have
  \begin{equation*}
    \imath^* u = \psum_{|J| = q} (\imath^* u_J) \ \imath^* \tau_J.
  \end{equation*}
  Since $\imath^*: \cinfty(G) \to \cinfty(H)$ is surjective (because $H \sset G$ is a closed embedded submanifold), it follows that $\imath^*: \cinfty(G; \LLambda^q) \to \cinfty(H; \wedge^q \C T^* H)$ is also surjective,  with kernel
  \begin{equation*}
    \{ u \in \cinfty(G; \LLambda^q) \st u_J|_H = 0, \quad \forall J \}.
  \end{equation*}
  Moreover, since $\imath^* \zeta_k = 0$ for all $k \in \{1, \ldots, m\}$, it follows that $\imath^*(\dd u - \dd' u) = 0$, thus
  \begin{equation*}
    \imath^* (\dd' u) = \imath^* (\dd u) = \dd \imath^* u,
  \end{equation*}
  i.e.,~$\imath^*: \cinfty(G; \LLambda^q) \lra \cinfty(H; \wedge^q \C T^* H)$ is a chain map. In particular, it induces maps in cohomology
  \begin{equation*}
    \imath^*: H^q_{\dd'}(\cinfty(G)) \lra H^q_{\mathrm{dR}}(H).
  \end{equation*}
  
  By restriction, it is immediate that $\imath^*: \C_G(G; \LLambda^q) \lra \C_H(H; \wedge^q \C T^* H)$ is an isomorphism (where the target is the space $\Span_\C \{ \imath^* \tau_J \st |J| = q \}$ of left-invariant $q$-forms on $H$). Hence,
  \begin{equation*}
    \imath^*: H^q_{\dd'}(\C_{G}) \lra H^q_{\dd}(\C_H)
  \end{equation*}
  is also an isomorphism, where $H^q_{\dd}(\C_H)$ stands for left-invariant de Rham cohomology of $H$. Finally, $H^q_{\dd}(\C_H) \cong H^q_{\mathrm{dR}}(H)$ (by~\cite{ce48}, or our Theorem~\ref{thm:rep_in_K} plus Remark~\ref{rem:rep-formula-tensor-K} applied to the de Rham cohomology of $H$).
\end{proof}

\section{Tube structures and related equations}
\label{sec:tube}

In this final section, we address some questions raised in a previous paper~\cite{afjr24} in the context of tube structures.

Let $M$ be a smooth $n$-dimensional manifold, whose bundle of $q$-forms we denote by $\Lambda^q$; the pullback of the latter to $\Omega \dfn M \times \TT^m$ will be denoted, in this section, by $\LLambda^q$: the bundle of \emph{$(0,q)$-forms}. The choice of $m$ smooth, closed $1$-forms on $M$ -- call them $\omega_1, \ldots, \omega_m$ -- induced on $\Omega$ an involutive structure $\VV \sset \C T \Omega$, often called a \emph{tube structure} on $\Omega$. Its associated differential complex~\cite{bch_iis, treves_has} can be concretely written as
\begin{equation*}
  \dd' \dfn \dd_t + \sum_{k = 1}^m \omega_k \wedge \del_{x_k}:
  \cinfty(\Omega; \LLambda^q) \lra \cinfty(\Omega; \LLambda^{q + 1}),
  \quad q \in \{0, \ldots, n - 1 \},
\end{equation*}
with cohomology spaces (based on smooth sections) denoted by $H^q_{\dd'}(\cinfty(\Omega))$.

\subsection{Normalization}
\label{subsec:normalization}

Consider on $\Omega$ two tube structures $\VV, \VV^\bb \sset \C T \Omega$, associated to $1$-forms $\omega_1, \ldots, \omega_m$ and $\omega_1^\bb, \ldots, \omega_m^\bb$, respectively.
\begin{Thm}
  Suppose there exist $\psi_1, \ldots, \psi_m \in \cinfty(M; \R)$ such that
  \begin{equation*}
    \omega_k^\bb = \omega_k + \dd \psi_k,
    \quad k \in \{1, \ldots, m\}.
  \end{equation*}
  Then there exists a diffeomorphism $\Psi: \Omega \to \Omega$ such that
  \begin{equation}
    \label{eq:compatible}
    \Psi_*(\VV_p) = \VV^\bb_{\Psi(p)},
    \quad \forall p \in \Omega.
  \end{equation}
\end{Thm}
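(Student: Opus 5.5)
The diffeomorphism will be an explicit fiberwise translation on the torus factor:
\begin{equation*}
  \Psi(t, x) \dfn \big(t, \, x + \psi(t)\big),
  \qquad \psi \dfn (\psi_1, \ldots, \psi_m),
\end{equation*}
with $x + \psi(t)$ understood modulo $2\pi\Z^m$. Since the $\psi_k$ are real-valued and globally defined on $M$, $\Psi$ is a well-defined smooth map $\Omega \to \Omega$. Its inverse is $(t,x) \mapsto (t, x - \psi(t))$, also smooth, so $\Psi$ is a diffeomorphism. This is where real-valuedness enters: for complex $\psi_k$ the translation would not preserve $\TT^m$.

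To verify~\eqref{eq:compatible}, I would work with annihilators instead of vector fields. By definition, $\VV$ (resp.~$\VV^\bb$) is the subbundle annihilated by
\begin{equation*}
  \zeta_k = \dd x_k - \omega_k
  \qquad (\text{resp.}~\zeta_k^\bb = \dd x_k - \omega_k^\bb), \quad k \in \{1, \ldots, m\}.
\end{equation*}
Since $\Psi$ is a diffeomorphism, $\Psi_* \VV_p = \VV^\bb_{\Psi(p)}$ holds for all $p$ if and only if $\Psi^*$ maps the span of $\zeta^\bb_1, \ldots, \zeta^\bb_m$ at $\Psi(p)$ onto the span of $\zeta_1, \ldots, \zeta_m$ at $p$. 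Both spans have rank $m$ and $\Psi^*$ is an isomorphism on cotangent spaces, so it suffices to show that each $\Psi^*\zeta_k^\bb$ lies in the span of the $\zeta_j$.

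The computation is then direct. Since $\Psi^* x_k = x_k + \psi_k$, we get $\Psi^* \dd x_k = \dd x_k + \dd \psi_k$. Since $\omega_k^\bb$ is pulled back from $M$ and $\Psi$ commutes with the projection onto $M$, we get $\Psi^* \omega_k^\bb = \omega_k^\bb = \omega_k + \dd\psi_k$. Hence
\begin{equation*}
  \Psi^* \zeta_k^\bb = \dd x_k + \dd \psi_k - \omega_k - \dd \psi_k = \zeta_k,
\end{equation*}
so $\Psi^*$ carries the annihilator of $\VV^\bb$ exactly onto that of $\VV$, and~\eqref{eq:compatible} follows. Equivalently, one can check directly that vector fields annihilated by the $\zeta_k$ push forward to fields annihilated by the $\zeta_k^\bb$.

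There is no real obstacle here. The only point needing care is the sign convention in $\Psi$: with $x \mapsto x - \psi(t)$ the pullback would produce $\omega_k^\bb - 2\,\dd\psi_k$ instead, so the sign must be chosen as above.
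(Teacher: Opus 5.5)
Your proof is correct and follows the paper's argument: the same fiberwise translation $\Psi(t,x) = (t, x + \psi(t))$ and the same computation $\Psi^*\zeta_k^\bb = \zeta_k$ on the annihilators. The only difference is that you work with the global forms $\dd x_k$ and $\omega_k^\bb$, using $\pi_M \circ \Psi = \pi_M$, where the paper uses local coordinates and local primitives $\phi_k^\bb$. (Minor slip in your closing remark: with the opposite sign one gets $\dd x_k - (\omega_k + 2\,\dd\psi_k)$, i.e.\ $\omega_k^\bb + \dd\psi_k$ in place of $\omega_k$, not $\omega_k^\bb - 2\,\dd\psi_k$. This does not affect the proof.)
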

\begin{proof}
  We define $\Psi: M \times \TT^m \to M \times \TT^m$ by
  \begin{equation*}
    \Psi(t, x_1, \ldots, x_m) \dfn (t, x_1 + \psi_1(t), \ldots, x_m + \psi_m(t)),
  \end{equation*}
  where the sum $x_k + \psi_k(t)$ takes place in $\R / 2 \pi \Z$. This is clearly a diffeomorphism (its inverse has the same form). To prove~\eqref{eq:compatible}, we show instead the dual relationship between the annihilator bundles:
  \begin{equation*}
    \Psi^* (\T'^\bb_{\Psi(p)}) = \T'_p,
    \quad \forall p \in \Omega.
  \end{equation*}
  
  Let $p = (t,x) \in \Omega$, and pick a neighborhood of $\Psi(p) = (t, x^\bb)$ of the form $U^\bb \times V^\bb$, where:
  \begin{itemize}
  \item $U^\bb \sset M$ is a neighborhood of $t$ where one can find $\phi_1^\bb, \ldots, \phi_m^\bb \in \cinfty(U^\bb)$ local primitives for $\omega_1^\bb, \ldots, \omega_m^\bb$, respectively; and
  \item $V^\bb \sset \TT^m$ is a neighborhood of $x^\bb$ with standard coordinates $(x_1^\bb, \ldots, x_m^\bb)$.
  \end{itemize}
  Take now $U \times V \sset \Psi^{-1}(U^\bb \times V^\bb)$ a neighborhood of $p$, where $U \sset M$ is a neighborhood of $t$ and $V \sset \TT^m$ is a neighborhood of $x$ with standard coordinates $(x_1, \ldots, x_m)$. Using the definition of $\Psi$, one checks at once that $U \sset U^\bb$, hence $\phi_k \dfn \phi_k^\bb - \psi_k$ is a primitive for $\omega_k$ on $U$ for each $k \in \{1, \ldots, m\}$.

  This means that
  \begin{equation*}
    \zeta_k \dfn \dd x_k - \dd \phi_k,
    \quad k \in \{1, \ldots, m\},
  \end{equation*}
  span $\T'$ on $U \times V$, whereas
  \begin{equation*}
    \zeta_k^\bb \dfn \dd x_k^\bb - \dd \phi_k^\bb,
    \quad k \in \{1, \ldots, m\},
  \end{equation*}
  span $\T'^\bb$ on $U^\bb \times V^\bb$. We have, on $U \times V$:
  \begin{equation*}
    \Psi^* x_k^\bb = x_k^\bb \circ \Psi = x_k + \psi_k + 2\pi \nu_k,
    \quad
    \Psi^* \phi_k^\bb = \phi_k^\bb \circ \Psi = \phi_k^\bb
  \end{equation*}
  (recall that $\phi_k^\bb$ depends only on the $t$-variables), for some $\nu \in \Z^m$ fixed. Hence,
  \begin{equation*}
    \Psi^* \zeta_k^\bb
    = \dd \Psi^* x_k^\bb - \dd \Psi^* \phi_k^\bb
    = \dd (x_k + \psi_k + 2 \pi \nu_k) - \dd \phi_k^\bb
    = \zeta_k
  \end{equation*}
  for each $k \in \{1, \ldots, m\}$, thus proving our assertion.
\end{proof}

\begin{Cor}
  \label{cor:real_normalization}
  Assume further that all $\omega_k, \omega_k^\bb$ are real-valued. If $[\omega_k] = [\omega_k^\bb]$ in $H^1_{\mathrm{dR}}(M)$ for every $k \in \{1, \ldots, m\}$, then there exists a diffeomorphism $\Psi: \Omega \to \Omega$ such that~\eqref{eq:compatible} holds.
\end{Cor}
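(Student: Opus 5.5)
The plan is to reduce Corollary~\ref{cor:real_normalization} to the preceding Theorem by producing real-valued functions $\psi_1, \ldots, \psi_m \in \cinfty(M; \R)$ with $\omega_k^\bb = \omega_k + \dd \psi_k$. Once these are found, the Theorem supplies the diffeomorphism $\Psi(t,x) = (t, x + \psi(t))$ satisfying~\eqref{eq:compatible}, and nothing more needs to be done.

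To obtain the $\psi_k$, fix $k \in \{1, \ldots, m\}$. The hypothesis $[\omega_k] = [\omega_k^\bb]$ in $H^1_{\mathrm{dR}}(M)$ means that the closed form $\omega_k^\bb - \omega_k$ is exact, so there is a smooth function $\tilde\psi_k$ on $M$ with $\dd \tilde\psi_k = \omega_k^\bb - \omega_k$.

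The only point needing care, and the one I expect to be the main obstacle, is that $\tilde\psi_k$ be real-valued. The Theorem requires real $\psi_k$ because $x_k + \psi_k(t)$ must make sense in $\R/2\pi\Z$. If $H^1_{\mathrm{dR}}(M)$ is understood with complex coefficients, $\tilde\psi_k$ could a priori be complex. To handle this, set $\psi_k \dfn \operatorname{Re} \tilde\psi_k$. Since $\omega_k^\bb - \omega_k$ is real and $\dd$ commutes with complex conjugation, we get
\begin{equation*}
  \dd \psi_k = \operatorname{Re}(\dd \tilde\psi_k) = \omega_k^\bb - \omega_k.
\end{equation*}
Equivalently, $\dd(\operatorname{Im}\tilde\psi_k) = 0$, so the imaginary part is locally constant and can simply be discarded. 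Applying the Theorem with these $\psi_1, \ldots, \psi_m$ then completes the argument.
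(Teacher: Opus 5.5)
Your proposal is correct and is the argument the paper has in mind; the paper states the corollary without proof as an immediate consequence of the preceding Theorem. Equality of the de Rham classes of the real forms $\omega_k, \omega_k^\bb$ yields real primitives $\psi_k$ of $\omega_k^\bb - \omega_k$, and the Theorem then produces $\Psi$. Your step of taking the real part of a possibly complex primitive is a valid detail that the paper leaves implicit.
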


Diffeomorphisms like $\Psi$ above are said to be \emph{compatible} with the involutive structures $\VV, \VV^\bb$. Thanks to~\eqref{eq:compatible}, they induce pullback maps $\Psi^*$ on the level of $(0,q)$-forms that commute with the associated differentials $\dd', \dd'_\bb$ (i.e.,~$\Psi^*$ is a chain map), hence conjugating the complexes by means of topological isomorphisms, and inducing isomorphism in cohomology:
\begin{equation*}
  \Psi^*: H^{q}_{\dd'_\bb}(\cinfty(\Omega)) \lra H^{q}_{\dd'}(\cinfty(\Omega)).
\end{equation*}
In particular, all the properties we are concerned with are preserved by it.

Since every closed $1$-form on a compact, connected Lie group $M$ admits a left-invariant representative (by~\cite{ce48}, or our Theorem~\ref{thm:rep_in_K} plus Remark~\ref{rem:rep-formula-tensor-K} applied to the de Rham cohomology of $M$), we have the following special case of Corollary~\ref{cor:real_normalization}:
\begin{Cor}
  \label{cor:normalization_lie_groups}
  When $M$ is a compact, connected Lie group, every real tube structure on $G \dfn M \times \TT^m$ is equivalent to one defined by left-invariant $1$-forms.
\end{Cor}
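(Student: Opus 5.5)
The plan is to combine Corollary~\ref{cor:real_normalization} with the existence of left-invariant representatives for de Rham classes of degree one on $M$. Let the real tube structure on $G = M \times \TT^m$ be given by closed real $1$-forms $\omega_1, \ldots, \omega_m$ on $M$. For each $k$, I will produce a real left-invariant closed $1$-form $\omega_k^\bb$ on $M$ with $[\omega_k^\bb] = [\omega_k]$ in $H^1_{\mathrm{dR}}(M)$. Corollary~\ref{cor:real_normalization} then gives a diffeomorphism $\Psi$ of $G$ compatible with $\VV$ and $\VV^\bb$, where $\VV^\bb$ is the tube structure defined by the $\omega_k^\bb$. This $\VV^\bb$ is left-invariant, as noted in the introduction.

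To obtain $\omega_k^\bb$, I apply Theorem~\ref{thm:rep_in_K} to the trivial essentially real structure $\VV = \C T M$ on the compact connected group $M$. Here $\gr{v} = \C\gr{m}$ and the complex~\eqref{eq:treves_complex} is the de Rham complex. Moreover $\K(M)$ consists of the locally constant functions, which are the constants since $M$ is connected.

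The hypothesis of Theorem~\ref{thm:rep_in_K} holds in this case. Either $\dd : \cinfty(M) \to \cinfty(M;\Lambda^1)$ has closed range by ellipticity of $\Delta_{\gr{v}} = \Delta_M$, or one can invoke Theorem~\ref{thm:subgroup_closed} with $H = M$, which is closed. Theorem~\ref{thm:rep_in_K} then says that every class in $H^1_{\mathrm{dR}}(M)$ has a representative $\pi_{\K}(\omega_k) \in \K(M;\LLambda^1) = \C_M(M;\LLambda^1)$, that is, a left-invariant form.

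Two points remain to check. First, the representative must be real. Since $\pi_{\K}$ is built from the spectral decomposition of the real operator $\Delta_{\gr{v}}$, it commutes with complex conjugation. Hence $\pi_{\K}(\omega_k)$ is real when $\omega_k$ is; alternatively, take the real part of any left-invariant representative, which is still left-invariant, closed and cohomologous to $\omega_k$. Second, the difference $\omega_k - \omega_k^\bb$ is then an exact real form $\dd \psi_k$, and $\psi_k$ may be taken real-valued by taking real parts. This is exactly the hypothesis of the normalization theorem.

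I expect no serious obstacle. The only mildly delicate step is verifying that the general machinery specializes correctly to $\VV = \C TM$, namely that $\K(M) = \C_M$ and that $\K(M;\LLambda^1)$ is precisely the space of left-invariant $1$-forms. The cited result~\cite{ce48} can be used instead.
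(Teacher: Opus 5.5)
Your proof is correct and follows the paper's route. Like the paper, you combine Corollary~\ref{cor:real_normalization} with the existence of left-invariant representatives of degree-one de Rham classes on $M$, obtained either from Theorem~\ref{thm:rep_in_K} applied to $\VV = \C TM$ or from~\cite{ce48}. Your additional check that the representative and the primitives $\psi_k$ can be taken real-valued is a detail the paper leaves implicit, and you handle it correctly.
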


\subsection{A formula for cohomology}
\label{sec:formula-tube}

In this section we assume that $M$ is a compact, connected $n$-dimensional manifold, and start by recalling some context from~\cite{afjr24}. There, we defined, for each $\xi \in \Z^m$ and $q \in \{0, \ldots, n - 1\}$, a differential operator $\dd'_\xi : \cinfty(M; \Lambda^q) \to \cinfty(M; \Lambda^{q + 1})$ by
\begin{equation*}
  \dd'_\xi f \dfn \dd f + i \sum_{k = 1}^m \xi_k \omega_k \wedge f.
\end{equation*}
It follows that $\dd'_\xi \circ \dd'_\xi = 0$, and the associated cohomology spaces
\begin{equation*}
  H^q_\xi(\cinfty(M))
  \dfn
  \frac{ \ker \{ \dd'_\xi: \cinfty(M; \Lambda^{q}) \lra \cinfty(M; \Lambda^{q + 1}) \}}
       { \ran \{ \dd'_\xi: \cinfty(M; \Lambda^{q - 1}) \lra \cinfty(M; \Lambda^{q}) \}}
\end{equation*}
are finite dimensional thanks to ellipticity of that complex, plus compactness of $M$. We further let $\sol_\xi$ be the sheaf of germs of smooth functions on $M$ annihilated by $\dd'_\xi$, and considered the group~\cite[Theorem~3.6]{afjr24}
\begin{equation*}
  \Gamma_{\pmb{\omega}}
  \dfn
  \{ \xi \in \Z^m \st \text{$\sol_\xi$ has a non-vanishing global section} \}.
\end{equation*}
Then, we turned our attention to the property:
\begin{equation}
  \label{eq:vanishing_mysterious_cohomologies}
  H^q_\xi(\cinfty(M)) = \{0\},
  \quad \forall \xi \in \Z^m \setminus \Gamma_{\pmb{\omega}}.
\end{equation}
When~\eqref{eq:vanishing_mysterious_cohomologies} holds (which is always the case, e.g.,~when $M$ is the $2$-sphere, or the $2$-torus), we proved that, for instance~\cite[Theorem~7.5]{afjr24}:
\begin{equation*}
  \text{$H_{\dd'}^{q}(\cinfty(\Omega))$ is finite dimensional}
  \Longleftrightarrow
  H_{\dd'}^{q}(\cinfty(\Omega)) \cong H^q_{\mathrm{dR}}(M).
\end{equation*}
When $\omega_1, \ldots, \omega_m$ are \emph{real}, a more precise result can be obtained~\cite[Theorem~5.7]{afjr24}: if $\dd': \cinfty(\Omega; \LLambda^{q - 1}) \to \cinfty(\Omega; \LLambda^{q})$ has closed range and~\eqref{eq:vanishing_mysterious_cohomologies} holds, then there is a natural isomorphism
\begin{equation*}
  H^q_{\dd'} (\cinfty(\Omega)) \cong \cinfty(\TT^r) \otimes H^q_{\mathrm{dR}}(M),
\end{equation*}
where $r$ is the rank of the group $\Gamma_{\pmb{\omega}}$. By the end of the next section, we will have proved the following result:
\begin{Thm}
  \label{thm:mysterious_cohomology_left_invariant_lis}
  If $M$ is a compact, connected \emph{Lie group} and $\omega_1, \ldots, \omega_m$ are closed, \emph{left-invariant} and real-valued then~\eqref{eq:vanishing_mysterious_cohomologies} holds for every $q \in \{0, \ldots, n\}$.
\end{Thm}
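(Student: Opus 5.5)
The plan is to exploit the symmetry of $M$ under left translations. Fix $\xi \in \Z^m$ and put $\theta \dfn \sum_{k} \xi_k \omega_k$, a real, closed, left-invariant $1$-form on $M$, so that $\dd'_\xi = \dd + i\theta \wedge \cdot$. I will show that if $H^q_\xi(\cinfty(M)) \neq \{0\}$ for some $q$, then $\xi \in \Gamma_{\pmb{\omega}}$, which is the contrapositive of~\eqref{eq:vanishing_mysterious_cohomologies}. The argument uses only that $H^q_\xi(\cinfty(M))$ is finite dimensional (ellipticity plus compactness, as recalled above) and that $M$ is compact and connected.

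\emph{Step 1: a left action on cohomology.} Since $\theta$ is left-invariant, $\ell_x^* \theta = \theta$ for all $x \in M$. Hence $\ell_x^*$ commutes with $\dd'_\xi$ on forms and induces linear maps $\rho(x)$ on $H^q_\xi(\cinfty(M))$. Setting $\rho(x) \dfn (\ell_{x^{-1}})^*$ makes $\rho$ a homomorphism $M \to GL(H^q_\xi(\cinfty(M)))$. The space is finite dimensional and Hausdorff, being a finite-dimensional quotient of a Fr\'echet space by a closed range. Continuity and smoothness of $\rho$ then follow from the smooth dependence of $\ell_x^* u$ on $x$ in $\cinfty$, composed with the continuous quotient map.

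\emph{Step 2: a twisted Cartan formula.} For $\vv{X} \in \gr{m}$, the Cartan formula $\dd \iota_{\vv{X}} + \iota_{\vv{X}} \dd = \mathcal{L}_{\vv{X}}$ combines with the identity $\theta \wedge \iota_{\vv{X}} u + \iota_{\vv{X}}(\theta \wedge u) = \theta(\vv{X})\, u$ to give
\begin{equation*}
  \dd'_\xi \iota_{\vv{X}} + \iota_{\vv{X}} \dd'_\xi = \mathcal{L}_{\vv{X}} + i\,\theta(\vv{X}),
\end{equation*}
where $\theta(\vv{X})$ is a real constant because both $\theta$ and $\vv{X}$ are left-invariant. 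Consequently $\mathcal{L}_{\vv{X}}$ acts on $H^q_\xi(\cinfty(M))$ as the scalar $-i\theta(\vv{X})$. Differentiating $\rho(\exp t\vv{X})$ at $t=0$ shows that $\dd\rho(\vv{X})$ is $\pm i\theta(\vv{X})$ times the identity. Therefore $\rho(\exp \vv{X}) = e^{\pm i\theta(\vv{X})}\, I$. Because $M$ is connected it is generated by $\exp(\gr{m})$, so $\rho(x) = \chi(x)\, I$ for a smooth character $\chi: M \to U(1)$; this is where $H^q_\xi \neq \{0\}$ is used, since $\rho$ must make sense on a nonzero space. The main obstacle is this step, namely justifying that $\rho$ is a smooth representation whose differential is the Lie derivative action. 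I would handle it by computing $\frac{\dd}{\dd t}\ell^*_{\exp t\vv{X}}u = \ell^*_{\exp t\vv{X}}\mathcal{L}_{\vv{X}}u$ and passing to the finite-dimensional quotient.

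\emph{Step 3: producing a nonvanishing solution.} Because $\chi$ is a homomorphism, $\dd\chi_x(\vv{X}_x) = \chi(x)\, \dd\chi_e(\vv{X}) = \pm i\theta(\vv{X})\chi(x)$ for every left-invariant $\vv{X}$, that is, $\dd \chi = \pm i\chi\, \theta$. Depending on the sign, either $\chi$ or $\bar\chi = 1/\chi$ satisfies $\dd f + i\theta f = 0$; the sign is fixed by a careful bookkeeping of $\rho(x) = \ell_{x^{-1}}^*$. That function is a nowhere-vanishing global section of $\sol_{\xi}$ or of $\sol_{-\xi}$. Since $\Gamma_{\pmb{\omega}}$ is a group~\cite[Theorem~3.6]{afjr24}, in either case $\xi \in \Gamma_{\pmb{\omega}}$, which completes the argument.
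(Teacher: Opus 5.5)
Your route is genuinely different from the paper's and can be made to work. However, Step~2 fails exactly at the point you flagged.

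\textbf{The gap.} The flow of a left-invariant field $\vv{X}\in\gr{m}$ is \emph{right} translation $g\mapsto g\exp(t\vv{X})$. So the identity $\frac{\dd}{\dd t}\ell^*_{\exp t\vv{X}}u=\ell^*_{\exp t\vv{X}}\mathcal{L}_{\vv{X}}u$ is false unless $\vv{X}$ is central. The generator of $\rho(x)=\ell^*_{x^{-1}}$ is $-\mathcal{L}_{\vv{X}^R}$, where $\vv{X}^R$ is the right-invariant field agreeing with $\vv{X}$ at $e$.

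\begin{itemize}
\item Your twisted Cartan formula does hold for $\vv{X}^R$. But the relevant scalar is then $\theta(\vv{X}^R)(y)=\theta_e(\mathrm{Ad}(y^{-1})\vv{X})$, and ``both $\theta$ and $\vv{X}$ are left-invariant'' no longer shows it is constant.
\item What you computed correctly, namely that $\mathcal{L}_{\vv{X}}$ acts on $H^q_\xi(\cinfty(M))$ as $-i\theta(\vv{X})$, is the infinitesimal action of right translations. But $x\mapsto r_x^*$ only acts on $H^q_\xi(\cinfty(M))$ if $r_x^*\theta=\theta$, which left-invariance alone does not give.
\end{itemize}

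So, as written, your group action and your scalar infinitesimal action do not match. The conclusion survives, but the justification needs one more fact.

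\textbf{The fix.} The missing fact is that a closed left-invariant $1$-form on a connected group is bi-invariant. For $\vv{X}\in\gr{m}$ we have $\mathcal{L}_{\vv{X}}\theta=\iota_{\vv{X}}\dd\theta+\dd(\theta(\vv{X}))=0$, hence $r^*_{\exp t\vv{X}}\theta=\theta$, and $\exp(\gr{m})$ generates $M$. Equivalently, $\theta_e$ kills $[\gr{m},\gr{m}]$, so it is $\mathrm{Ad}$-invariant and $\theta(\vv{X}^R)\equiv\theta_e(\vv{X})$. With this you could keep $\ell$, but the right action $x\mapsto r_x^*$ is cleaner: it commutes with $\dd'_\xi$, and for $\dd'_\xi u=0$ your formula integrates to
\[
e^{it\theta(\vv{X})}\,r^*_{\exp t\vv{X}}u-u=\dd'_\xi\int_0^t e^{is\theta(\vv{X})}\,r^*_{\exp s\vv{X}}\,\iota_{\vv{X}}u\,\dd s .
\]
Hence on a nonzero $H^q_\xi(\cinfty(M))$, $r_x^*$ acts by a character $\chi$ with $\chi(\exp\vv{X})=e^{-i\theta(\vv{X})}$. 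Then $\dd\chi+i\theta\chi=0$, so $\chi$ itself is a nowhere-vanishing global section of $\sol_\xi$. This version also removes the need for smoothness of $\rho$ and for the sign bookkeeping.

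\textbf{Comparison with the paper.} The paper proves the equivalent statement that $\ker\DD_\omega=\{0\}$ forces $H^q_\omega(\cinfty(M))=\{0\}$. Its steps are:
\begin{itemize}
\item split into $\Delta_M$-eigenspaces via~\cite[Corollary~2.9]{araujo19}, which uses finite dimensionality;
\item lift each piece by $e^{ix}$ to the real left-invariant tube structure on $M\times\TT^1$;
\item show $\Delta_{\gr{v}}$ is bijective on each finite-dimensional $\mathscr{F}_\mu(G;\LLambda^q)$, which is where injectivity of $\DD_\omega$ enters;
\item apply the homotopy formula of Proposition~\ref{prop:ce}.
\end{itemize}
Both arguments rest on a Chevalley--Eilenberg-type homotopy. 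The paper's is for $\Delta_{\gr{v}}$, built from $\vv{T}_j+a_j\del_x$, which act on $e^{ix}\phi$ as $\vv{T}_j+ia_j$ act on $\phi$. Yours is for each single operator $\mathcal{L}_{\vv{X}}+i\theta(\vv{X})$. The paper then needs invertibility of $\Delta_{\gr{v}}$ eigenspace by eigenspace; you instead use integrality of characters of the compact group $M$. The paper's route keeps the argument inside its general machinery ($\gr{v}$-invariant subspaces, Lemma~\ref{lem:ce}). Yours, once repaired, is shorter and self-contained, avoids Fourier analysis, and with the integrated formula above does not even need finite dimensionality of $H^q_\xi(\cinfty(M))$.
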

Actually, thanks to Corollary~\ref{cor:normalization_lie_groups}, the hypothesis of left-invariance of $\omega_1, \ldots, \omega_m$ is inessential. Indeed, for each $k \in \{1, \ldots, m\}$ let $\omega_k^\bb \dfn \omega_k + \dd \psi_k$ for some $\psi_k \in \cinfty(M; \R)$, and consider $\dd''_\xi$ the operators associated with the new system $\omega_1^\bb, \ldots, \omega_m^\bb$. That is:
\begin{equation*}
  \dd''_\xi f \dfn \dd f + i \sum_{k = 1}^m \xi_k \omega_k^\bb \wedge f.
\end{equation*}
A simple computation shows that
\begin{equation*}
  \dd'_\xi (e^{i \phi} f) = e^{i \phi} \dd''_\xi f,
  \quad \text{where $\phi \dfn \sum_{k = 1}^m \xi_k \psi_k$},
\end{equation*}
hence both complexes (and all their associated constructions: sheaves of solutions, cohomologies, etc.)~are isomorphic: for real $\omega_1, \ldots, \omega_m$, condition~\eqref{eq:vanishing_mysterious_cohomologies} also depends only on their de Rham cohomology classes. The conclusions will, then, be summarized as follows:
\begin{Cor}
  \label{cor:mysterious_cohomology_left_invariant_lis}
  If $M$ is a compact, connected  Lie group, $\omega_1, \ldots, \omega_m$ are closed and real-valued, and $r$ is the rank of the group $\Gamma_{\pmb{\omega}}$, then
  \begin{equation*}
    H^q_{\dd'} (\cinfty(\Omega)) \cong \cinfty(\TT^r) \otimes H^q_{\mathrm{dR}}(M)
  \end{equation*}
  holds for every  $q \in \{0, \ldots, n\}$.
\end{Cor}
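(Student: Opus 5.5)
The plan is to reduce to the left-invariant case and then assemble results already available: Corollary~\ref{cor:normalization_lie_groups}, Theorem~\ref{thm:mysterious_cohomology_left_invariant_lis}, and \cite[Theorem~5.7]{afjr24}. I read the statement as carrying the standing hypothesis of \cite[Theorem~5.7]{afjr24}, namely that $\dd': \cinfty(\Omega; \LLambda^{q - 1}) \to \cinfty(\Omega; \LLambda^{q})$ has closed range. Some such hypothesis seems unavoidable: for $M = \TT^1$ and $\omega = \alpha\, \dd t$ with $\alpha$ Liouville, $r = 0$ and the right-hand side is finite dimensional, while $H^1_{\dd'}$ is not even Hausdorff. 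I will make this explicit at the end.

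\textbf{Step 1 (normalization).} Since $M$ is a compact, connected Lie group, every real closed $1$-form on $M$ is cohomologous to a real left-invariant one (Cartan--Eilenberg, or Theorem~\ref{thm:rep_in_K} with Remark~\ref{rem:rep-formula-tensor-K} applied to the de Rham complex). Corollary~\ref{cor:real_normalization} then gives a compatible diffeomorphism $\Psi$ of $\Omega = M \times \TT^m$. Its pullback conjugates the two complexes $\dd'$ topologically, so both closedness of ranges and the cohomology spaces are preserved.

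I also need the data on the right-hand side to be unchanged. The gauge identity $\dd'_\xi(e^{i\phi}f) = e^{i\phi}\dd''_\xi f$, with $\phi = \sum_k \xi_k\psi_k$ real, does this. Multiplication by $e^{i\phi}$ is an isomorphism of sheaves $\sol''_\xi \cong \sol_\xi$ that preserves non-vanishing global sections, so $\Gamma_{\pmb\omega} = \Gamma_{\pmb\omega^\bb}$ and $r$ is unchanged. It also induces isomorphisms $H^q_\xi \cong H^{q}_{\xi,\bb}$ for every $\xi$. Hence we may assume $\omega_1,\dots,\omega_m$ are left-invariant.

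\textbf{Step 2 (vanishing condition).} With left-invariant real $\omega_k$, Theorem~\ref{thm:mysterious_cohomology_left_invariant_lis} gives condition \eqref{eq:vanishing_mysterious_cohomologies} for all $q$. Its proof is deferred to the next section, so I take it as given.

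\textbf{Step 3 (apply the representation theorem).} Now \cite[Theorem~5.7]{afjr24} yields $H^q_{\dd'}(\cinfty(\Omega)) \cong \cinfty(\TT^r)\otimes H^q_{\mathrm{dR}}(M)$, provided the closed-range hypothesis holds. After normalization, $\VV$ is an essentially real, left-invariant structure on the compact connected Lie group $G = M\times\TT^m$. Corollary~\ref{cor:hausdorff_cohomology_all} therefore applies: closedness of the range of the first $\dd'$ already gives closed range in every degree $q \le n$.

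\textbf{Main obstacle.} The delicate point is not the assembly but pinning down the hypothesis. The isomorphism cannot hold for a non-Diophantine structure, as the $\TT^1$ example shows. I would therefore either state the corollary under the closed-range assumption in degree one, which suffices by Corollary~\ref{cor:hausdorff_cohomology_all}, or read ``$\cong$'' as in \cite[Theorem~5.7]{afjr24}, where it is asserted only together with closedness of the range. Beyond this, the only care needed is checking that the gauge transformation in Step 1 preserves $\Gamma_{\pmb\omega}$, which is exactly where reality of $\psi_k$, and hence $|e^{i\phi}| = 1$, is used.
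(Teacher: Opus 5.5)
Your route is the paper's: the gauge identity $\dd'_\xi(e^{i\phi}f)=e^{i\phi}\dd''_\xi f$ reduces you to left-invariant forms, Theorem~\ref{thm:mysterious_cohomology_left_invariant_lis} then gives \eqref{eq:vanishing_mysterious_cohomologies}, and you conclude with \cite[Theorem~5.7]{afjr24}. Two side remarks: the paper does not need the diffeomorphism $\Psi$, though transporting by it is what lets you use Corollary~\ref{cor:hausdorff_cohomology_all} to reduce the closed-range requirement to degree one; and $e^{i\phi}$ is nowhere zero for every $\phi$, so reality of the $\psi_k$ is needed to build $\Psi$, not to preserve $\Gamma_{\pmb{\omega}}$.

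You are also right that the corollary as printed drops the closed-range hypothesis under which \cite[Theorem~5.7]{afjr24} is quoted just before it. Your Liouville example on $\TT^1$ shows the literal statement is false: there $r=0$, but $H^1_{\dd'}(\cinfty(\Omega))$ is non-Hausdorff and infinite dimensional. So the paper's argument, like yours, only proves the version with that hypothesis.
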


\subsection{Perturbations of the de Rham complex}
\label{sec:perturb}

Motivated by the discussion in the previous section, we consider a class of zero-order perturbations of the de Rham complex of $M$. Given $\omega$ a smooth, closed $1$-form on $M$, for each $q \in \{0, \ldots, n\}$ we define a first-order operator acting on $q$-forms:
\begin{equation*}
  \DD_\omega \dfn \dd + i \omega \wedge \cdot :
  \cinfty(M; \Lambda^q) \lra \cinfty(M; \Lambda^{q + 1}).
\end{equation*}
Clearly $\DD_\omega \circ \DD_\omega = 0$, and the differential complex so constructed is elliptic. Its cohomology spaces are finite dimensional, and denoted by
\begin{equation}
  \label{eq:cohomology-spaces-Domega}
  H_\omega^q(\cinfty(M)) \dfn
  \frac{\ker \{ \DD_\omega: \cinfty(M; \Lambda^q) \lra \cinfty(M; \Lambda^{q + 1}) \}}
  {\ran \{ \DD_\omega: \cinfty(M; \Lambda^{q - 1}) \lra \cinfty(M; \Lambda^q) \}}.
\end{equation}
We will prove:
\begin{Thm}
  \label{theremkerneliszerothencohomologyiszero}
  If $M$ is a compact, connected Lie group, and $\omega$ is closed, left-invariant and real-valued, then we have two possibilities:
  \begin{enumerate}
  \item either
    \begin{equation*}
      \ker \{ \DD_\omega: \cinfty(M) \lra \cinfty(M; \Lambda^{1}) \} = \{0\},
    \end{equation*}
    in which case $H_\omega^q(\cinfty(M)) = \{0\}$ for every $q \in \{1, \ldots, n \}$;
  \item or the kernel of $\DD_\omega$ is non-trivial, in which case $H_\omega^q(\cinfty(M)) \cong H^q_{\mathrm{dR}}(M)$ for every $q \in \{1, \ldots, n\}$.
  \end{enumerate}
\end{Thm}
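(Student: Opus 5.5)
The plan is to treat the two cases separately, and to handle the first with a Chevalley--Eilenberg type argument: the translation action of $M$ on the finite dimensional spaces $H^q_\omega(\cinfty(M))$ turns out to be forced to be a character.

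\emph{Case (2).} Let $f \in \cinfty(M)$ be nonzero with $\DD_\omega f = 0$, i.e.\ $\dd f = -i f \omega$. Since $\omega$ is real, we get $\dd |f|^2 = \bar f \, \dd f + f \, \dd \bar f = 0$. As $M$ is connected, $|f|$ is a nonzero constant, so $f$ vanishes nowhere. For every form $u$ the Leibniz rule gives $\DD_\omega (f u) = f \, \dd u$. Hence multiplication by $f$ is an isomorphism of complexes from $(\cinfty(M;\Lambda^\bullet), \dd)$ onto $(\cinfty(M;\Lambda^\bullet), \DD_\omega)$, with inverse multiplication by $1/f$. This yields $H^q_\omega(\cinfty(M)) \cong H^q_{\mathrm{dR}}(M)$.

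\emph{Case (1): $\omega$ is bi-invariant.} I first show that $\omega$ is also right-invariant. For left-invariant $\vv{X}, \vv{Y}$, closedness gives $0 = \dd\omega(\vv{X},\vv{Y}) = -\omega([\vv{X},\vv{Y}])$, so $\omega$ vanishes on $[\gr{g},\gr{g}]$. For compact $M$ we have $\gr{g} = \gr{z} \oplus [\gr{g},\gr{g}]$, where $\mathrm{Ad}$ acts trivially on the center $\gr{z}$ and preserves $[\gr{g},\gr{g}]$. Therefore $\omega \circ \mathrm{Ad}_x = \omega$, and $\omega$ is bi-invariant. Consequently right translations $r_x$ satisfy $r_x^* \DD_\omega = \DD_\omega r_x^*$. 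They thus induce a representation $\rho$ of $M$ on each $H^q_\omega(\cinfty(M))$. This space is finite dimensional by ellipticity, and Hodge theory for the elliptic complex, together with the smoothness of the action on forms, shows that $\rho$ is a smooth representation.

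\emph{Case (1): computing the derivative of $\rho$.} The flow of a left-invariant $\vv{X} \in \gr{g}$ is $r_{\exp t\vv{X}}$. So the derivative of $\rho$ at $\vv{X}$ is induced by the Lie derivative $\mathcal{L}_{\vv{X}}$. By the Cartan formula,
\begin{equation*}
  \DD_\omega \imath_{\vv{X}} + \imath_{\vv{X}} \DD_\omega = \mathcal{L}_{\vv{X}} + i\,\omega(\vv{X}),
\end{equation*}
and $\omega(\vv{X})$ is a constant. Hence on cohomology $\dd\rho(\vv{X}) = -i\,\omega(\vv{X})\,\mathrm{Id}$. Now suppose $H^q_\omega(\cinfty(M)) \neq \{0\}$ and pick a nonzero class $c$. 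Because $\dd\rho$ is scalar, the line $\C c$ is invariant under $\rho(\exp \vv{X})$ for all $\vv{X}$. Since $M$ is compact and connected, $\exp$ is onto, so the line is $\rho$-invariant. This gives a smooth character $\chi: M \to \mathbb{S}^1$ with $\dd\chi(\vv{X}) = -i\,\omega(\vv{X})$. Left-invariance of $\omega$ together with $\chi(xy) = \chi(x)\chi(y)$ gives $\dd\chi = -i\chi\,\omega$, i.e.\ $\DD_\omega \chi = 0$ with $\chi \neq 0$. This contradicts the hypothesis of case (1), so $H^q_\omega(\cinfty(M)) = \{0\}$ for every $q \geq 1$.

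\emph{Expected obstacle.} The main difficulty is the rigorous passage in case (1) from the cochain-level homotopy to a statement about the group representation on cohomology. This requires smoothness of $\rho$ and the identification of its differential with the map induced by $\mathcal{L}_{\vv{X}}$. I would obtain both from Hodge theory: identify $H^q_\omega(\cinfty(M))$ with the finite dimensional harmonic space, on which right translations act smoothly after composing with the continuous harmonic projection. The bi-invariance of $\omega$ is the other point that must be checked carefully, since it is what makes right translations commute with $\DD_\omega$.
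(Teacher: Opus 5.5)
Your proof is correct. Case (2) matches the paper; in case (1) you take a genuinely different route.

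\textbf{How the paper argues case (1).} The paper never lets $M$ act on $H^q_\omega(\cinfty(M))$. It splits the complex along the $\Delta_M$-eigenspaces using \eqref{eq:agag_iso}, a Fourier-type result quoted from~\cite{araujo19}. It then transports each finite dimensional piece $\cinfty_{E_\mu}(M;\Lambda^\bullet)$ into the $\dd'$-complex of the left-invariant tube structure defined by $\dd x-\omega$ on $M\times\TT^1$, via $u\mapsto e^{ix}u$. The hypothesis $\ker\DD_\omega=\{0\}$ makes $\Delta_{\gr{v}}$ invertible on these pieces. Finally, the second-order Chevalley--Eilenberg identity of Proposition~\ref{prop:ce}, namely $\dd'\bigl(\sum_j\LL_j\imath_{\LL_j}f\bigr)=-\Delta_{\gr{v}}f$ for $\dd'$-closed $f$, kills the cohomology of each piece.

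\textbf{How your argument differs.} You use a first-order version of the same idea. The twisted Cartan formula $\DD_\omega\imath_{\vv{X}}+\imath_{\vv{X}}\DD_\omega=\mathcal{L}_{\vv{X}}+i\omega(\vv{X})$ shows that $\gr{g}$ acts on cohomology by the scalar $-i\omega(\vv{X})$. Integrating this to the group produces a character $\chi$ with $\DD_\omega\chi=0$. That integration needs the bi-invariance of $\omega$, which you correctly derive from $\omega|_{[\gr{g},\gr{g}]}=0$.

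\textbf{What each route buys.}
\begin{itemize}
\item Your route is more self-contained. It needs only finite dimensionality and closed range from ellipticity, plus two standard facts: continuous finite dimensional representations of Lie groups are smooth, and $\exp$ is onto.
\item It also treats the whole cohomology at once. It shows that a nonzero class in any degree forces a character in $\ker\DD_\omega$, with no Fourier decomposition needed.
\item The paper's route avoids representation theory on the cohomology, and it showcases the machinery of Lemma~\ref{lem:ce} built for the main theorems.
\end{itemize}

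\textbf{On the smoothness step you flagged.} Hodge theory is more than you need. The quotient $H^q_\omega(\cinfty(M))$ is Hausdorff and finite dimensional, so $\rho$ is a continuous homomorphism into $GL(H^q_\omega(\cinfty(M)))$, hence smooth. The formula $\dd\rho(\vv{X})[u]=[\mathcal{L}_{\vv{X}}u]$ then follows by pushing the $\cinfty$-convergence $t^{-1}(r^*_{\exp t\vv{X}}u-u)\to\mathcal{L}_{\vv{X}}u$ through the continuous quotient map.

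\textbf{On case (2).} Your argument $\dd|f|^2=0$ is a shorter substitute for the paper's local-primitive argument for nonvanishing. It uses the reality of $\omega$, which the paper's argument does not, but reality is among the hypotheses.
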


\begin{Rem}
  The algebraic parts of the proofs below, including the ones in the appendix, are inspired by an argument in~\cite[pp.~117--118]{ce48}.
\end{Rem}

We start by giving explicit global descriptions of the objects involved. Let $\vv{T}_1, \ldots, \vv{T}_n$ be a basis for the Lie algebra $\gr{m}$ of $M$, with dual basis  $\tau_1, \ldots, \tau_n \in \gr{m}^*$. We write
\begin{equation*}
  \omega = \sum_{j = 1}^n a_j \tau_j,
  \quad a_j \in \R,
\end{equation*}
due to its left-invariance. An arbitrary $u \in \cinfty(M; \Lambda^q)$  is written as
\begin{equation}
  \label{eq:forms_M_lie_group}
  u = \psum_{|J| = q} u_{J} \ \tau_J,
  \quad u_{J} \in \cinfty(M),
\end{equation}
hence
\begin{equation*}
  \DD_\omega u
  = \psum_{|J| = q} \sum_{j = 1}^n  \left( \vv{T}_j u_{J} + i a_j u_{J} \right) \ \tau_j \wedge \tau_J
  + \psum_{|J| = q} u_{J} \ \dd \tau_J,
\end{equation*}
that is,  $\DD_\omega$ is expressed in global frames as a matrix of left-invariant differential operators on $M$. For that reason, it allows for Fourier analysis.

Take $\Delta_M$ the Laplace-Beltrami operator associated to some $\ad$-invariant metric on $M$. For each $\mu \in \sigma(\Delta_M)$, we denote by $\cinfty_{E_\mu}(M)$ the eigenspace associated to $\mu$, and by $\cinfty_{E_\mu}(M; \Lambda^{q})$ the space of $q$-forms~\eqref{eq:forms_M_lie_group} such that $u_J \in \cinfty_{E_\mu}(M)$ for every $J$. It follows that the entries of $\DD_\omega$ commute with $\Delta_M$, hence $\DD_\omega \cinfty_{E_\mu}(M; \Lambda^{q}) \sset \cinfty_{E_\mu}(M; \Lambda^{q + 1})$ for every $\mu$.

By finite dimensionality of $H^q_\omega(\cinfty(M))$, it follows from~\cite[Corollary~2.9]{araujo19} that
\begin{equation}
  \label{eq:agag_iso}
  H^q_\omega(\cinfty(M))
  \cong
  \bigoplus_{\mu \in \sigma(\Delta_M)}
  \frac{ \ker \{ \DD_\omega: \cinfty_{E_\mu}(M; \Lambda^{q}) \lra \cinfty_{E_\mu}(M; \Lambda^{q+1}) \}}
  { \ran \{ \DD_\omega: \cinfty_{E_\mu}(M; \Lambda^{q-1}) \lra \cinfty_{E_\mu}(M; \Lambda^{q}) \}}.
\end{equation}
In particular, at most finitely many quotients in the right-hand side of~\eqref{eq:agag_iso} are non-trivial.
\begin{proof}[Proof of Theorem~\ref{theremkerneliszerothencohomologyiszero}]
  First we assume that there is a non-zero $\phi \in \cinfty(M)$ such that $\dd \phi = -i \phi \omega$. If $\phi$ did vanish at some $t_0\in M$, we could take $U \sset M$ a connected open neighborhood of $t_0$ where $\omega$ is exact: $\omega|_U = \dd \psi$ for some $\psi \in \cinfty(U)$. Therefore:
  \begin{equation*}
    \dd ( \phi e^{i \psi}) = \dd \phi \wedge e^{ i \psi} + i \phi e^{i \psi} \wedge \dd \psi = -i \phi \omega \wedge  e^{ i \psi} + i\phi e^{i \psi} \wedge \omega=0.
  \end{equation*}
  Consequently, $\phi e^{i \psi}$ is constant, and since $\phi(t_0) = 0$ it follows that $\phi$ vanishes on $U$. Since $M$ is connected, this leads to a contradiction, so $\phi$ cannot vanish anywhere in $M$. Thus we can write
  \begin{equation*}
    \dd (f / \phi) = (1 / \phi) \DD_\omega (f),
    \quad \forall f \in \cinfty(M; \Lambda^q),
  \end{equation*}
  hence multiplication by $\phi^{-1}$ yields $H_\omega^q(\cinfty(M)) \cong H^q_{\mathrm{dR}}(M)$ for every $q \in \{0, \ldots, n\}$.

  Next we consider the case where $\DD_\omega$ is injective. Notice that  $\cinfty_{E_\mu}(M; \Lambda^{q})$  is finite dimensional, just as the  cohomology spaces
  \begin{equation*}
    H^q_\omega(\cinfty_{E_\mu}(M)) \dfn
    \frac{ \ker \{ \DD_\omega: \cinfty_{E_\mu}(M; \Lambda^{q}) \lra \cinfty_{E_\mu}(M; \Lambda^{q+1}) \}}
         { \ran \{ \DD_\omega:  \cinfty_{E_\mu}(M; \Lambda^{q-1}) \lra \cinfty_{E_\mu}(M; \Lambda^{q})\}},
         \quad \mu \in \sigma(\Delta_M),
  \end{equation*}
  which, as we have seen, vanish except for finitely many $\mu$: thanks to~\eqref{eq:agag_iso}, in order to prove the vanishing of~\eqref{eq:cohomology-spaces-Domega} we must check that $H^q_\omega(\cinfty_{E_\mu}(M)) = \{0\}$ for \emph{all} $\mu$.
  
  On $G \dfn M \times \TT^1$, we consider $\VV \sset \C T G$ the corank $1$ tube structure determined by the single $1$-form $\dd x - \omega$. Since $\omega$ is left-invariant, so is $\VV$, with associated subalgebra $\gr{v} \sset \C \gr{g}$ spanned by the real left-invariant vector fields
  \begin{equation*}
    \LL_j \dfn \vv{T}_j + a_j \del_x,
    \quad j \in \{1, \ldots, n\}.
  \end{equation*}
  Fix $\mu \in \sigma(\Delta_M)$ and consider the space
  \begin{equation*}
    \mathscr{F}_{\mu}(G)
    \dfn
    \{ e^{ix} \phi \st \phi \in \cinfty_{E_\mu}(M) \}
    \sset \cinfty(G),
  \end{equation*}
  which is finite dimensional and $\gr{v}$-invariant:
  \begin{equation*}
    \vv{L}_j (e^{ix} \phi )
    = e^{ix} \left( \vv{T}_j \phi + i a_j \phi \right),
    \quad \forall j \in \{1, \ldots, n\}.
  \end{equation*}
  Notice also that
  \begin{equation*}
    \mathscr{F}_{\mu} (G; \LLambda^{q}) =  \{ e^{ix} u \st u \in \cinfty_{E_\mu}(M; \Lambda^q) \} \sset \cinfty(G; \LLambda^q)
  \end{equation*}
  is finite dimensional for every $q\in \{0, \ldots, n\}$, and for $u \in \cinfty_{E_\mu}(M; \Lambda^{q})$ we have
  \begin{equation*}
    \dd' ( e^{i x} u)
    = (\dd_t + \omega \wedge \del_x)(e^{i x} u)
    = e^{i x} \dd_t u + i e^{i x} \omega \wedge u
    = e^{ix} (\DD_\omega u).
  \end{equation*}
  The map induced by multiplication by $e^{i x}$   in cohomology
  \begin{equation*}
    H^q_\omega(\cinfty_{E_\mu}(M)) \lra H^{q}_{\dd'}(\mathscr{F}_{\mu}(G))
  \end{equation*}
  is thus well-defined, and moreover injective. Indeed, if $[f] \in H^q_\omega(\cinfty_{E_\mu}(M))$ is such that $[e^{ix} f] = 0$ in $H^{q}_{\dd'}(\mathscr{F}_{\mu}(G))$ then there exists $u \in \mathscr{F}_{\mu} (G; \LLambda^{q - 1})$ such that  $\dd' u = e^{ix} f$. As such, there exists $v \in \cinfty_{E_\mu}(M; \Lambda^{q - 1})$ such that $u = e^{i x} v$, hence
  \begin{equation*}
    e^{ix}f = \dd' ( e^{ix} v ) = e^{ix} ( \DD_\omega v )
    \Longrightarrow
    f = \DD_\omega v
  \end{equation*}
  from which $[f] = 0$ follows. To finish, we prove that $H^{q}_{\dd'}(\mathscr{F}_{\mu}(G)) = \{0\}$.
    
  First, notice that $\Delta_{\gr{v}}: \mathscr{F}_{\mu} (G; \LLambda^{q}) \lra \mathscr{F}_{\mu} (G; \LLambda^{q})$ is a bijection; since $\mathscr{F}_{\mu} (G; \LLambda^{q})$ is finite dimensional, it suffices to check injectivity. Take $f \in \mathscr{F}_{\mu} (G; \LLambda^{q})$ such that $\Delta_{\gr{v}} f = 0$. We have
  \begin{equation*}
    f = \psum_{|J| = q} f_{J} \ \tau_J,
    \quad f_{J} \in \mathscr{F}_{\mu}(G)
    \Longrightarrow
    \Delta_{\gr{v}} f = \psum_{|J| = q} (\Delta_{\gr{v}} f_{J}) \ \tau_J,
  \end{equation*}
  hence $\Delta_{\gr{v}} f_{J} = 0$ for each $J$. However, for an arbitrary $e^{ix} \phi \in \mathscr{F}_{\mu}(G)$ we have
  \begin{align*}
    \Delta_{\gr{v}} (e^{ix} \phi) = 0
    &\Longleftrightarrow
      \vv{L}_j (e^{i x} \phi) = 0,
      \quad \forall j \in \{1, \ldots, n\} \\
    &\Longleftrightarrow
      \dd' (e^{ix} \phi) = 0 \\
    &\Longleftrightarrow
      \DD_\omega \phi = 0 \\
    &\Longleftrightarrow
      \phi = 0
  \end{align*}
  by hypothesis. So  $f_J = 0$  for all $J$,  i.e.,~$f = 0$.
 
  Finally, let $f \in \mathscr{F}_{\mu} (G; \LLambda^{q})$ be such that $\dd' f = 0$, and take $g \in \mathscr{F}_{\mu} (G; \LLambda^{q})$ such that $\Delta_{\gr{v}} g = f$. Then
  \begin{equation*}
    0 = \dd' f = \dd' \Delta_{\gr{v}} g = \Delta_{\gr{v}} (\dd' g),
  \end{equation*}
  which implies that $\dd' g = 0$ by the injectivity of $\Delta_{\gr{v}}: \mathscr{F}_{\mu} (G; \LLambda^{q + 1}) \to \mathscr{F}_{\mu} (G; \LLambda^{q + 1})$ shown above. Lemma~\ref{lem:ce} ensures the existence of a $v \in \mathscr{F}_{\mu} (G; \LLambda^{q - 1})$ such that $\dd' v = \Delta_{\gr{v}} g = f$. Since $f$ is arbitrary, we conclude that $H^{q}_{\dd'}(\mathscr{F}_{\mu}(G)) = \{0\}$.
\end{proof}

Back to Theorem~\ref{thm:mysterious_cohomology_left_invariant_lis}, fix $\xi\in \Z^{m}$ and $\omega_1, \ldots, \omega_m$ left-invariant $1$-forms. For
\begin{equation*}
  \omega \dfn \sum_{k=1}^{m} \xi_k \omega_k
\end{equation*}
we have $\dd'_\xi= \DD_{\omega}$ and $H^q_{\xi} (\cinfty(M))= H^{q}_\omega(\cinfty(M))$. Note that $\xi \notin \Gamma_{\pmb{\omega}}$ if and only if $\ker \{\DD_\omega: \cinfty(M) \to \cinfty(M; \Lambda^{1}) \} = \{0\}$. Therefore, Theorem~\ref{theremkerneliszerothencohomologyiszero} implies that condition~\eqref{eq:vanishing_mysterious_cohomologies} holds for every $q\in \{0, \ldots, n\}$.

\appendix

\section{Proofs of the algebraic lemmas}
\label{ap:algebraic_proof}

\begin{proof}[Proof of Lemma~\ref{lem:lap_commutes}]
  Since $\gr{v} = \bar{\gr{v}}$, it is enough to prove the statement assuming $\vv{L}$ real. As differential operators on $G$, we have
  \begin{equation*}
    - [\vv{L}, \Delta_{\gr{v}}]
    = \sum_{j = 1}^n [\vv{L}, \vv{L}_j^2]
    = \sum_{j = 1}^n [\vv{L}, \vv{L}_j] \vv{L}_j + \vv{L}_j [\vv{L}, \vv{L}_j].
  \end{equation*}
  Moreover, since $[\vv{L}, \vv{L}_j] \in \gr{v}$ we can write
  \begin{equation}
    \label{eq:orth_comm}
    [\vv{L}, \vv{L}_j] = \sum_{k = 1}^n c_{jk} \vv{L}_k
  \end{equation}
  where
  \begin{equation}
    \label{eq:orth_comm_coeff}
    c_{jk}
    \dfn \langle [\vv{L}, \vv{L}_j], \vv{L}_k \rangle
    = - \langle \vv{L}_j, [\vv{L}, \vv{L}_k] \rangle
    = - \langle [\vv{L}, \vv{L}_k], \vv{L}_j \rangle
    = - c_{kj}
  \end{equation}
  (by $\ad$-invariance; we make use of our assumption that $\vv{L}$ is real). Hence:
  \begin{align*}
    - [\vv{L}, \Delta_{\gr{v}}]
    &= \sum_{j = 1}^n \sum_{k = 1}^n c_{jk} \vv{L}_k \vv{L}_j
    + \sum_{j = 1}^n \sum_{k = 1}^n c_{jk} \vv{L}_j \vv{L}_k \\
    &= \sum_{j = 1}^n \sum_{k = 1}^n c_{jk} \vv{L}_k \vv{L}_j
    - \sum_{j = 1}^n \sum_{k = 1}^n c_{kj} \vv{L}_j \vv{L}_k \\
    &= 0.
  \end{align*}
\end{proof}

\begin{Lem}
  \label{lem:fund_identity}
  For every $\vv{L} \in \gr{v}$ and every $1$-form $f$ on $G$ we have
  \begin{equation}
    \label{eq:rel_ce}
    \sum_{j = 1}^n [\vv{L}, \vv{L}_j] f(\vv{L}_j) = - \sum_{j = 1}^n \vv{L}_j f([\vv{L}, \vv{L}_j]).
  \end{equation}
\end{Lem}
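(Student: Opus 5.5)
Both sides of \eqref{eq:rel_ce} are $\cinfty(G)$-linear in $f$ only up to derivatives, so I will expand them directly. Write $f(\vv{L}_j) = f_j \in \cinfty(G)$. As in the proof of Lemma~\ref{lem:lap_commutes}, it suffices to treat real $\vv{L} \in \gr{h}$, since both sides are complex-linear in $\vv{L}$ and $\gr{v} = \gr{h} \oplus i\gr{h}$. For such $\vv{L}$, the expansion \eqref{eq:orth_comm} is available: $[\vv{L}, \vv{L}_j] = \sum_k c_{jk} \vv{L}_k$ with real constants $c_{jk}$. By \eqref{eq:orth_comm_coeff}, which uses the $\ad$-invariance of the metric and the orthonormality of $\vv{L}_1, \ldots, \vv{L}_n$, these constants satisfy $c_{jk} = -c_{kj}$.

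Next I compute each side. The left-hand side is
\begin{equation*}
\sum_j \sum_k c_{jk} \vv{L}_k f_j .
\end{equation*}
For the right-hand side, $f$ is $\cinfty(G)$-linear and the $c_{jk}$ are constants, so $f([\vv{L}, \vv{L}_j]) = \sum_k c_{jk} f_k$. Hence the right-hand side equals
\begin{equation*}
-\sum_j \sum_k c_{jk} \vv{L}_j f_k .
\end{equation*}
Swapping the summation indices $j \leftrightarrow k$ turns this into $-\sum_{j,k} c_{kj} \vv{L}_k f_j$, and the antisymmetry $c_{kj} = -c_{jk}$ converts it into $\sum_{j,k} c_{jk} \vv{L}_k f_j$, which is the left-hand side.

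There is no real obstacle here. The only points needing care are the following. First, the reduction to real $\vv{L}$ requires that \eqref{eq:rel_ce} be complex-linear in $\vv{L}$, which it visibly is. Second, one must note that the constants $c_{jk}$ pass through $f$ and that no derivative of $f$ is involved on the right. Third, the argument is unaffected if $f$ is a general $1$-form rather than one in $\cinfty(G;\LLambda^1)$: only the values $f(\vv{L}_j)$ enter.
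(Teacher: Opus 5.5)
Your proof is correct and follows the paper's argument exactly: reduce to real $\vv{L}$ by linearity, expand $[\vv{L},\vv{L}_j]$ via \eqref{eq:orth_comm}, and match both sides using the antisymmetry \eqref{eq:orth_comm_coeff}. You have simply written out explicitly the index swap that the paper leaves to the reader.
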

\begin{proof}
  By linearity, once more it is enough to assume $\vv{L}$ real. Writing $[\vv{L}, \vv{L}_j]$ as in~\eqref{eq:orth_comm}, one develops both sides of~\eqref{eq:rel_ce}: they match perfectly thanks to~\eqref{eq:orth_comm_coeff}.
\end{proof}

Before we state the key technical lemma, we recall a few operations on forms, as well as their relationships. Given a vector field $\vv{L} \in \gr{v}$ and $u$ a $q$-form we denote by $\imath_{\vv{L}} u$ their contraction, which is the$(q - 1)$-form defined by
\begin{equation*}
  (\imath_{\vv{L}} u)(\vv{X_1}, \ldots, \vv{X}_{q - 1}) \dfn u(\vv{L}, \vv{X_1}, \ldots, \vv{X}_{q - 1}),
  \quad \text{$\vv{X_1}, \ldots, \vv{X}_{q - 1}$ vector fields on $G$},
\end{equation*}
hence $\imath_{\vv{L}} \cinfty(G; \T'^{1, q - 1}) \sset \cinfty(G; \T'^{1, q - 2})$. Moreover, $\imath_{\vv{L}} \cinfty(G; \LLambda^{q}) \sset \cinfty(G; \LLambda^{q - 1})$ since $\imath_{\vv{L}}$ is $\cinfty(G)$-linear, and, by~\cite[eqn.~(13.3)]{lee_smooth},
\begin{equation*}
  \imath_{\vv{L}} (\tau_{j_1} \wedge \cdots \wedge \tau_{j_q})
  = \sum_{k = 1}^q (-1)^{k - 1} \tau_{j_k} (\vv{L})
  \tau_{j_1} \wedge \cdots \wedge \widehat{\tau}_{j_k} \wedge \cdots \wedge \tau_{j_q}
  \in \C_G(G; \LLambda^{q - 1}).
\end{equation*}
Actually, $\imath_{\vv{L}} \mathscr{F}(G; \LLambda^{q}) \sset \mathscr{F}(G; \LLambda^{q - 1})$ for every $\gr{v}$-invariant subspace $\mathscr{F}(G) \sset \cinfty(G)$.

For $u \in \cinfty(G; \LLambda^{q})$, the Lie derivative $\mathcal{L}_{\vv{L}} u$ satisfies~\cite[Proposition~18.9(f)]{lee_smooth}
\begin{align*}
  (\mathcal{L}_{\vv{L}} u) (\pmb{\vv{X}})
  &= \mathcal{L}_{\vv{L}} \left( u(\pmb{\vv{X}}) \right)
  + \sum_{k = 1}^q (-1)^{k} u(\mathcal{L}_{\vv{L}} \vv{X}_k, \widehat{\pmb{\vv{X}}}_k) \\
  &= \vv{L} \left( u(\pmb{\vv{X}}) \right)
  + \sum_{k = 1}^q (-1)^{k} u([\vv{L}, \vv{X}_k], \widehat{\pmb{\vv{X}}}_k),
\end{align*}
where $\pmb{\vv{X}} \dfn (\vv{X}_1, \ldots, \vv{X}_q)$ and $\widehat{\pmb{\vv{X}}}_k \dfn (\vv{X}_1, \ldots, \widehat{\vv{X}}_k, \ldots, \vv{X}_q)$ for $k \in \{1, \ldots, m\}$. This shows that $\mathcal{L}_{\vv{L}} \cinfty(G; \T'^{1, q - 1}) \sset \cinfty(G; \T'^{1, q - 1})$, in particular inducing a linear map $\mathcal{L}_{\vv{L}}':  \cinfty(G; \LLambda^{q}) \to \cinfty(G; \LLambda^{q})$. It satisfies, for $\vv{X}_1, \ldots, \vv{X}_q$ sections of $\VV$:
\begin{equation*}
  (\mathcal{L}_{\vv{L}}' u) (\pmb{\vv{X}})
  = \vv{L} \left( u(\pmb{\vv{X}}) \right)
  + \sum_{k = 1}^q (-1)^{k} u([\vv{L}, \vv{X}_k], \widehat{\pmb{\vv{X}}}_k),
\end{equation*}
whatever $u \in \cinfty(G; \LLambda^{q})$. Again, we have $\mathcal{L}_{\vv{L}}' \mathscr{F}(G; \LLambda^{q}) \sset \mathscr{F}(G; \LLambda^{q})$ for any $\gr{v}$-invariant subspace $\mathscr{F}(G) \sset \cinfty(G)$. Moreover, Cartan's formula~\cite[Proposition~18.13]{lee_smooth}
\begin{equation*}
  \mathcal{L}_{\vv{L}} u = \imath_{\vv{L}} (\dd u) + \dd (\imath_{\vv{L}} u)
\end{equation*}
yields, by the properties just established,
\begin{equation*}
  \mathcal{L}'_{\vv{L}} u + \cinfty(G; \T'^{1, q - 1})
  =
  \imath_{\vv{L}} (\dd' u) + \cinfty(G; \T'^{1, q - 1})
  +
  \dd' (\imath_{\vv{L}} u) + \cinfty(G; \T'^{1, q - 1}),
\end{equation*}
which translates into
\begin{equation*}
  \mathcal{L}'_{\vv{L}} u = \imath_{\vv{L}} (\dd' u) + \dd' (\imath_{\vv{L}} u),
  \quad \forall u \in \cinfty(G; \LLambda^{q}).
\end{equation*}

\begin{Prop}
  \label{prop:ce}
  If $f \in \cinfty(G; \LLambda^{q})$ is such that $\dd' f = 0$ then
  \begin{equation*}
    v \dfn \sum_{j = 1}^n \vv{L}_j (\imath_{\vv{L}_j} f)
  \end{equation*}
  solves $\dd' v = -\Delta_{\gr{v}} f$.
\end{Prop}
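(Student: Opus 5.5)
Since $\dd' f = 0$, Cartan's formula on $\LLambda^q$ gives $\mathcal{L}'_{\vv{L}_j} f = \dd'(\imath_{\vv{L}_j} f)$ for each $j$. Applying $\vv{L}_j$ coefficientwise and summing,
\begin{equation*}
  \sum_{j=1}^n \vv{L}_j \mathcal{L}'_{\vv{L}_j} f = \sum_{j=1}^n \vv{L}_j \dd'(\imath_{\vv{L}_j} f).
\end{equation*}
I would then compute both sides by evaluating on sections $\vv{X}_1,\ldots,\vv{X}_q$ of $\VV$. By linearity and left-invariance, it suffices to take each $\vv{X}_k$ in $\gr{v}$, in fact among the basis $\vv{L}_1,\ldots,\vv{L}_n$.

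For the left-hand side, the formula for $\mathcal{L}'_{\vv{L}}$ gives
\begin{equation*}
  \sum_j \vv{L}_j \mathcal{L}'_{\vv{L}_j} f\,(\pmb{\vv{X}}) = \sum_j \vv{L}_j^2\big(f(\pmb{\vv{X}})\big) + \sum_j\sum_k (-1)^k \vv{L}_j\big(f([\vv{L}_j,\vv{X}_k],\widehat{\pmb{\vv{X}}}_k)\big).
\end{equation*}
The first sum is $-\Delta_{\gr{v}} f$, since $\Delta_{\gr{v}}$ acts coefficientwise.

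For the right-hand side, I would use the commutation formula \eqref{eq:comm_A_dprime} with $A=\vv{L}_j$, and sum over $j$:
\begin{equation*}
  \sum_j \vv{L}_j\dd'(\imath_{\vv{L}_j}f) = \dd' v - \sum_j \big(\dd'(\vv{L}_j \imath_{\vv{L}_j} f) - \vv{L}_j \dd'(\imath_{\vv{L}_j} f)\big).
\end{equation*}
Evaluated on $\pmb{\vv{X}}$, the commutator term becomes
\begin{equation*}
  \sum_j\sum_k (-1)^{k-1} [\vv{X}_k,\vv{L}_j]\big(f(\vv{L}_j,\widehat{\pmb{\vv{X}}}_k)\big).
\end{equation*}
Hence $\dd' v = -\Delta_{\gr{v}} f$ reduces to the identity
\begin{equation*}
  \sum_{j,k}(-1)^k \vv{L}_j\big(f([\vv{L}_j,\vv{X}_k],\widehat{\pmb{\vv{X}}}_k)\big) = \sum_{j,k}(-1)^{k}[\vv{X}_k,\vv{L}_j]\big(f(\vv{L}_j,\widehat{\pmb{\vv{X}}}_k)\big),
\end{equation*}
possibly up to a sign to be tracked carefully.

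For each fixed $k$, this is exactly Lemma~\ref{lem:fund_identity} applied with $\vv{L}=\vv{X}_k$ to the $1$-form $g \dfn f(\cdot,\widehat{\pmb{\vv{X}}}_k)$. Indeed, $[\vv{L}_j,\vv{X}_k] = -[\vv{X}_k,\vv{L}_j]$, and the remaining entries $\widehat{\pmb{\vv{X}}}_k$ are left-invariant, so they do not interact with the differentiations. The lemma gives $\sum_j[\vv{X}_k,\vv{L}_j]\, g(\vv{L}_j) = -\sum_j \vv{L}_j\, g([\vv{X}_k,\vv{L}_j]) = \sum_j \vv{L}_j\, g([\vv{L}_j,\vv{X}_k])$, which is the required term-by-term match.

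The main obstacle is bookkeeping: the signs coming from the position of $\vv{L}_j$ inside $f$ versus the slot removed in \eqref{eq:comm_A_dprime}, and checking that evaluating only on left-invariant $\vv{X}_k$ suffices. The latter holds because both sides are $\cinfty(G)$-multilinear forms in $\LLambda^{q}$, so they are determined by their values on the basis $\vv{L}_{j_1},\ldots,\vv{L}_{j_q}$. I would do the $q=1$ case first to fix the signs, then handle general $q$ by applying the lemma for each $k$ separately. Finally, the claim that $v$ stays in $\mathscr{F}(G;\LLambda^{q-1})$ is immediate from the $\gr{v}$-invariance of $\mathscr{F}$ and of $\imath_{\vv{L}_j}$, which gives Lemma~\ref{lem:ce} (with $-v$).
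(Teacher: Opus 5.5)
Your proposal is correct and follows essentially the same route as the paper. You split $\dd' v$ into the commutator terms, which you handle with \eqref{eq:comm_A_dprime}, and the terms $\sum_j \vv{L}_j \dd'(\imath_{\vv{L}_j} f) = \sum_j \vv{L}_j \mathcal{L}'_{\vv{L}_j} f$ coming from Cartan's formula; you then evaluate on elements of $\gr{v}$ and cancel the leftover terms via Lemma~\ref{lem:fund_identity} applied to $f(\cdot, \widehat{\pmb{\vv{X}}}_k)$ with $\vv{L} = \vv{X}_k$. The signs in your reduced identity are exactly right as written, so no further correction is needed.
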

\begin{Rem}
  For $\mathscr{F}(G)$ a $\gr{v}$-invariant subspace, $f \in \mathscr{F}(G; \LLambda^{q})$ implies $v \in \mathscr{F}(G; \LLambda^{q - 1})$.
\end{Rem}
\begin{proof}
  We write
  \begin{equation}
    \label{eq:first_identity_technical_lemma}
    \dd' v
    = \sum_{j = 1}^n \dd' \vv{L}_j (\imath_{\vv{L}_j} f)
    = \sum_{j = 1}^n \left( \dd' \vv{L}_j (\imath_{\vv{L}_j} f)
    - \vv{L}_j \dd' (\imath_{\vv{L}_j} f)  \right)
    + \sum_{j = 1}^n \vv{L}_j \dd' (\imath_{\vv{L}_j} f),
  \end{equation}
  and focus on the first sum, which we develop by evaluating at $\vv{X}_1, \ldots, \vv{X}_q \in \gr{v}$. Thanks to~\eqref{eq:comm_A_dprime}, we have
  \begin{align*}
    \left( \dd' \vv{L}_j (\imath_{\vv{L}_j} f) - \vv{L}_j \dd' (\imath_{\vv{L}_j} f) \right) (\pmb{\vv{X}})
    &= \sum_{k = 1}^q (-1)^{k + 1} [\vv{X}_k, \vv{L}_j]
    \left( (\imath_{\vv{L}_j} f) (\widehat{\pmb{\vv{X}}}_k) \right) \\
    &= \sum_{k = 1}^q (-1)^{k + 1} [\vv{X}_k, \vv{L}_j]
    f(\vv{L}_j, \widehat{\pmb{\vv{X}}}_k), 
  \end{align*}
  hence
  \begin{equation*}
    \sum_{j = 1}^n
    \left( \dd' \vv{L}_j (\imath_{\vv{L}_j} f) - \vv{L}_j \dd' (\imath_{\vv{L}_j} f) \right)
    (\pmb{\vv{X}})
    = \sum_{k = 1}^q (-1)^{k + 1}
    \sum_{j = 1}^n [\vv{X}_k, \vv{L}_j]
    f(\vv{L}_j, \widehat{\pmb{\vv{X}}}_k).
  \end{equation*}
  We then apply Lemma~\ref{lem:fund_identity} to the $1$-form $f(\cdot, \widehat{\pmb{\vv{X}}}_k)$ and $\vv{L} = \vv{X}_k$:
  \begin{equation*}
    \sum_{j = 1}^n [\vv{X}_k, \vv{L}_j] f(\vv{L}_j, \widehat{\pmb{\vv{X}}}_k)
    = \sum_{j = 1}^n \vv{L}_j f([\vv{L}_j, \vv{X}_k], \widehat{\pmb{\vv{X}}}_k),
  \end{equation*}
  which we replace in the previous identity to obtain
  \begin{equation*}
    \sum_{j = 1}^n
    \left( \dd' \vv{L}_j (\imath_{\vv{L}_j} f) - \vv{L}_j \dd' (\imath_{\vv{L}_j} f) \right)
    (\pmb{\vv{X}})
    = \sum_{k = 1}^q (-1)^{k + 1}
    \sum_{j = 1}^n \vv{L}_j
    f([\vv{L}_j, \vv{X}_k], \widehat{\pmb{\vv{X}}}_k).
  \end{equation*}

  Now we study the last sum in~\eqref{eq:first_identity_technical_lemma}:
  \begin{equation*}
    \sum_{j = 1}^n \vv{L}_j \dd' (\imath_{\vv{L}_j} f)
    = \sum_{j = 1}^n \vv{L}_j (\mathcal{L}'_{\vv{L}_j} f - \imath_{\vv{L}_j} (\dd' f))
    = \sum_{j = 1}^n \vv{L}_j \mathcal{L}'_{\vv{L}_j} f,
  \end{equation*}
  which we again evaluate at $\vv{X}_1, \ldots, \vv{X}_q \in \gr{v}$:
  \begin{equation*}
    (\mathcal{L}_{\vv{L}_j}' f) (\pmb{\vv{X}})
    = \vv{L}_j \left( f(\pmb{\vv{X}}) \right)
    + \sum_{k = 1}^q (-1)^{k} f([\vv{L}_j, \vv{X}_k], \widehat{\pmb{\vv{X}}}_k),
  \end{equation*}
  hence
  \begin{equation*}
    (\vv{L}_j \mathcal{L}_{\vv{L}_j}' f) (\pmb{\vv{X}})
    = \vv{L}_j \left( (\mathcal{L}_{\vv{L}_j}' f) (\pmb{\vv{X}}) \right) 
    = \vv{L}_j^2 \left( f(\pmb{\vv{X}}) \right)
    + \sum_{k = 1}^q (-1)^{k} \vv{L}_j f([\vv{L}_j, \vv{X}_k], \widehat{\pmb{\vv{X}}}_k),
  \end{equation*}
  and thus
  \begin{equation*}
    \sum_{j = 1}^n (\vv{L}_j \mathcal{L}'_{\vv{L}_j} f) (\pmb{\vv{X}})
    = -(\Delta_{\gr{v}} f)(\pmb{\vv{X}})
    + \sum_{k = 1}^q (-1)^{k}
    \sum_{j = 1}^n \vv{L}_j
    f([\vv{L}_j, \vv{X}_k], \widehat{\pmb{\vv{X}}}_k).
  \end{equation*}

  Summing everything up, all the inconvenient terms cancel out leaving us with
  \begin{equation*}
    (\dd' v)(\vv{X}_1, \ldots, \vv{X}_q)
    = -(\Delta_{\gr{v}} f)(\vv{X}_1, \ldots, \vv{X}_q),
    \quad \forall \vv{X}_1, \ldots, \vv{X}_q \in \gr{v},
  \end{equation*}
  which is enough to prove our assertion as forms in $\cinfty(G; \LLambda^{q})$ are completely determined by their action on vectors in $\gr{v}$.
\end{proof}

\def\cprime{$'$}


\begin{thebibliography}{10}

\bibitem{araujo19}
G.~Ara\'{u}jo.
\newblock Global regularity and solvability of left-invariant differential
  systems on compact {L}ie groups.
\newblock {\em Ann. Global Anal. Geom.}, 56(4):631--665, 2019.

\bibitem{afjr24}
G.~Ara\'{u}jo, I.~A. Ferra, M.~R. Jahnke, and L.~F. Ragognette.
\newblock Global solvability and cohomology of tube structures on compact
  manifolds.
\newblock {\em Math. Ann.}, 390(2):2199--2233, 2024.

\bibitem{afr22}
G.~Ara\'{u}jo, I.~A. Ferra, and L.~F. Ragognette.
\newblock Global solvability and propagation of regularity of sums of squares
  on compact manifolds.
\newblock {\em J. Anal. Math.}, 148(1):85--118, 2022.

\bibitem{am_ica}
M.~F. Atiyah and I.~G. Macdonald.
\newblock {\em Introduction to commutative algebra}.
\newblock Addison-Wesley Publishing Co., Reading, Mass.-London-Don Mills, Ont.,
  1969.

\bibitem{bcp96}
A.~P. Bergamasco, P.~D. Cordaro, and G.~Petronilho.
\newblock Global solvability for certain classes of underdetermined systems of
  vector fields.
\newblock {\em Math. Z.}, 223(2):261--274, 1996.

\bibitem{bpzz18}
A.~P. Bergamasco, A.~Parmeggiani, S.~L. Zani, and G.~A. Zugliani.
\newblock Geometrical proofs for the global solvability of systems.
\newblock {\em Math. Nachr.}, 291(16):2367--2380, 2018.

\bibitem{bp99sys}
A.~P. Bergamasco and G.~Petronilho.
\newblock Global solvability of a class of involutive systems.
\newblock {\em J. Math. Anal. Appl.}, 233(1):314--327, 1999.

\bibitem{bch_iis}
S.~Berhanu, P.~D. Cordaro, and J.~Hounie.
\newblock {\em An introduction to involutive structures}, volume~6 of {\em New
  Mathematical Monographs}.
\newblock Cambridge University Press, Cambridge, 2008.

\bibitem{ce48}
C.~Chevalley and S.~Eilenberg.
\newblock Cohomology theory of {L}ie groups and {L}ie algebras.
\newblock {\em Trans. Amer. Math. Soc.}, 63:85--124, 1948.

\bibitem{ckmt26}
S.~Coriasco, A.~Kirilov, W.~A.~A. de~Moraes, and P.~M. Tokoro.
\newblock Global hypoellipticity for involutive systems on non-compact
  manifolds.
\newblock {\em J. Geom. Anal.}, 36(1):Paper No. 22, 21, 2026.

\bibitem{dm16}
P.~L. Dattori~da Silva and A.~Meziani.
\newblock Cohomology relative to a system of closed forms on the torus.
\newblock {\em Math. Nachr.}, 289(17-18):2147--2158, 2016.

\bibitem{ds26}
P.~L. Dattori~da Silva and F.~M. Sim{\~a}o.
\newblock A differential complex on a compact {L}ie group.
\newblock {\em Ann. Mat. Pura Appl. (4)}, 2026.

\bibitem{hz17}
J.~Hounie and G.~Zugliani.
\newblock Global solvability of real analytic involutive systems on compact
  manifolds.
\newblock {\em Math. Ann.}, 369(3-4):1177--1209, 2017.

\bibitem{hz19}
J.~Hounie and G.~Zugliani.
\newblock Global solvability of real analytic involutive systems on compact
  manifolds. {P}art 2.
\newblock {\em Trans. Amer. Math. Soc.}, 371(7):5157--5178, 2019.

\bibitem{jahnke23}
M.~R. Jahnke.
\newblock Elliptic involutive structures on compact {L}ie groups.
\newblock {\em Ann. Sc. Norm. Super. Pisa Cl. Sci. (5)}, 24(1):487--518, 2023.

\bibitem{knapp_lgbi}
A.~W. Knapp.
\newblock {\em Lie groups beyond an introduction}, volume 140 of {\em Progress
  in Mathematics}.
\newblock Birkh\"auser Boston, Inc., Boston, MA, 1996.

\bibitem{lee_smooth}
J.~M. Lee.
\newblock {\em Introduction to smooth manifolds}, volume 218 of {\em Graduate
  Texts in Mathematics}.
\newblock Springer-Verlag, New York, 2003.

\bibitem{treves_has}
F.~Treves.
\newblock {\em Hypo-analytic structures}, volume~40 of {\em Princeton
  Mathematical Series}.
\newblock Princeton University Press, Princeton, NJ, 1992.
\newblock Local theory.

\end{thebibliography}
\end{document}